\theoremstyle{plain}
\newtheorem{thm}{Theorem}[section]
\newtheorem{lem}[thm]{Lemma}
\newtheorem{prop}[thm]{Proposition}
\newtheorem{cor}[thm]{Corollary}
\theoremstyle{definition}
\newtheorem*{theorem*}{Theorem}
\newcommand{\Q}{\mathbb{Q}}
\newcommand{\N}{\mathbb{N}}
\newcommand{\Z}{\mathbb{Z}}
\DeclareMathOperator{\pt}{pt}
\newtheorem{dfn}[thm]{Definition}
\newtheorem{obs}[thm]{Remark}
\begin{document}
\begin{frontmatter}



\title{On the Cantor and Hilbert Cube Frames and the Alexandroff-Hausdorff Theorem}


\author{F. Avila, J. Urenda, A. Zald\'ivar}

\address{}

\begin{abstract}


The aim of this work is to give a pointfree description of the Cantor set.  It can be shown (see, e.g. \cite{Alain}) that the Cantor set is homeomorphic to the $p$-adic integers $\Z_p:=\{x\in\Q_p: |x|_p\leq 1\}$ for every prime number $p$. To give a pointfree description of the Cantor set, we specify the frame of $\Z_p$ by generators and relations. We use the fact that the open balls centered at integers
generate the open subsets of $\Z_p$ and thus we think of them as the basic generators; on this poset we impose some relations and then the resulting quotient is the frame of the Cantor set $\mathcal{L}(\mathbb{Z}_{p})$.
 
 A topological characterization of it is given by Brouwer's Theorem \cite{Brouwer}: The Cantor set is the unique totally disconnected, compact metric space with no isolated points. 
We prove that $\mathcal{L}(\Z_p)$ is a spatial frame whose space of points is homeomorphic to $\Z_p$. In particular, we show with pointfree arguments that $\mathcal{L}(\Z_p)$ is $0$-dimensional, (completely) regular, compact, and metrizable (it admits a countably generated uniformity). Moreover, we show that the frame $\mathcal{L}(\mathbb{Z}_{p})$ satisfies $\displaystyle{cbd_{\mathcal{L}(\mathbb{Z}_{p})}(0)=0}$, where $\displaystyle{cbd_{\mathcal{L}(\mathbb{Z}_{p})}:\mathcal{L}(\mathbb{Z}_{p}) \to \mathcal{L}(\mathbb{Z}_{p})}$, defined by $$\displaystyle{cbd_{\mathcal{L}(\mathbb{Z}_{p})}(a)=\bigwedge\big\{x\in \mathcal{L}(\mathbb{Z}_{p}) \mid x\leq a \text{ and } (x\to a)=a\big\}}$$
is the Cantor-Bendixson derivative (see, e.g. \cite{MR3363833}). It follows that a frame $L$ is isomorphic to $\mathcal{L}(\mathbb{Z}_{p})$ if and only if $L$ is a $0$-dimensional compact regular metrizable frame with $cbd_L(0)=0$. Finally, we give a point-free counterpart of the Hausdorff-Alexandroff Theorem which states  that \emph{every compact metric space is a continuous image of the Cantor space} (see, e.g. \cite{Alexandroff} and \cite{Hausdorff}). We prove the point-free analogue: if $L$ is a compact metrizable frame, then there is an injective frame homomorphism from $L$ into $\mathcal{L}(\mathbb{Z}_{2})$.
\end{abstract}





\end{frontmatter}


\section{Introduction}\label{intro}
Frames (locales, complete Heyting algebras) appeared as an algebraic manifestation of topological spaces. Indeed, for any topological space $S$ its topology $\Omega(S)$ is a frame. The category \textbf{Frm} consists of frames as objects and $\bigvee,\wedge$-preserving functions as morphisms, we have a contravariant adjunction (see \cite{Johnstone82} or \cite{PicadoPultr} for details):
\[\begin{tikzcd}
\textbf{Top} \arrow[r, "\Omega(\_)"{name=F}, bend left=25] &  \textbf{Frm} \arrow[l, "\pt(\_)"{name=G}, bend left=25] \arrow[phantom, from=F, to=G, "\dashv" rotate=-90] 
\end{tikzcd}\]
Several \emph{point-sensitive} constructions  have a frame theoretical counterpart. For example, Joyal in \cite{Joyal} produces a point-free construction of the reals where a detailed construction can be found in \cite[Chapter IV]{Johnstone82}. Banaschewski in \cite{Banaschewski} gives another construction of the frame of the reals using generators and relations where aspects of the reals and its rings of continuous functions can be translated into this setting ( see \cite[Chapter XIV]{PicadoPultr}). Later, in \citep{avila2020frame} the first author introduces the frame of $p$-adic numbers defined with relations on the partial ordered set of open balls of the corresponding ultrametric of $\mathbb{Q}_{p}$. As in the case of the reals, the first author explores the rings of $p$-adic continuous functions.
This manuscript explores a modification of the construction of the frame of $p$-adic numbers $\mathcal{L}(\mathbb{Q}_{p})$ leading to the the frame of the Cantor set. 
Set $\displaystyle{|\Z|:=\{p^{-n+1}\mid n\in \N\}}$ and let $\mathcal{L}(\Z_p)$ be the frame generated by the elements $B_r(a)$, where $a\in \Z$ and $r\in|\Z|$, subject to the following relations:
\begin{itemize}
\item[(Q$_1$)] $B_r(a)\wedge B_s(b)=0 $ whenever $|a-b|_p\geq r$ and $s\leq r$.
\item[(Q$_2$)] $1=\bigvee \big\{B_r(a): a\in \Z,\ r\in|\Z| \big\}$.
\item[(Q$_3$)] $B_r(a)=\bigvee \big\{B_s(b) : |a-b|_p<r, \, s<r,\, b\in\Z,\, s\in|\Z|\big\}$.
\end{itemize}

Among several properties of this frame, we prove that it is spatial and isomorphic to the topology given by the ultrametric of the $p$-adic integers $\mathbb{Z}_{p}$. Our main motivation for the study of this frame comes from a remarkable topological fact:

\begin{theorem*}(Hausdorff-Alexandroff)

Every compact metric space is a continuous image of the Cantor space
\end{theorem*}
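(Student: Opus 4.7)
The plan is to mirror the classical Hausdorff-Alexandroff argument, factoring the required surjection through the Hilbert cube. Let $L$ be a compact metrizable frame and write $H := \mathcal{L}([0,1])^{\oplus \omega}$ for the Hilbert cube frame. First I would establish a point-free analogue of the Urysohn embedding: the countably generated uniformity on $L$ (which exists by metrizability) together with complete regularity produces a countable family of bounded frame maps $\mathcal{L}([0,1]) \to L$ whose images jointly generate $L$, and these assemble into a single surjective frame homomorphism $q : H \twoheadrightarrow L$. This is the step where metrizability of $L$ does its essential work.

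Next I would construct an injective frame homomorphism $\phi : \mathcal{L}([0,1]) \hookrightarrow \mathcal{L}(\mathbb{Z}_{2})$, dual to the binary-expansion surjection $\mathbb{Z}_{2} \twoheadrightarrow [0,1]$, $\sum_{i\geq 0} a_i 2^i \mapsto \sum_{i\geq 0} a_i 2^{-i-1}$. On generators, a rational open interval of $[0,1]$ is sent to the corresponding union of clopen dyadic balls $B_{2^{-n+1}}(a)$ in $\mathcal{L}(\mathbb{Z}_{2})$, and one must verify that the Banaschewski relations defining $\mathcal{L}([0,1])$ are respected; injectivity follows from the density of dyadic rationals. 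Using that $\mathbb{Z}_{2} \cong \mathbb{Z}_{2}^{\omega}$ as topological spaces and that $\mathcal{L}(\mathbb{Z}_{2})$ is spatial (already proved in the paper), one has $\mathcal{L}(\mathbb{Z}_{2})^{\oplus \omega} \cong \mathcal{L}(\mathbb{Z}_{2})$; taking the countable coproduct of $\phi$ then yields an injection $H \hookrightarrow \mathcal{L}(\mathbb{Z}_{2})$.

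To combine these, I would form the pushout $M := L \sqcup_{H} \mathcal{L}(\mathbb{Z}_{2})$ in $\mathbf{Frm}$. Standard localic arguments (pushout of a surjection is a surjection; pushout of an injection along a surjection is an injection) yield an injection $L \hookrightarrow M$ together with a surjection $\mathcal{L}(\mathbb{Z}_{2}) \twoheadrightarrow M$. Since $M$ is a quotient of $\mathcal{L}(\mathbb{Z}_{2})$ dual to the localic pullback, it is compact, $0$-dimensional, regular, and metrizable. The closing move is the following ``perfect absorption'' argument: the coproduct $M \oplus \mathcal{L}(\mathbb{Z}_{2})$ is still $0$-dimensional, compact, regular, and metrizable, and it inherits $cbd(0)=0$ from the $\mathcal{L}(\mathbb{Z}_{2})$ factor, whence by the characterization theorem already established we have $M \oplus \mathcal{L}(\mathbb{Z}_{2}) \cong \mathcal{L}(\mathbb{Z}_{2})$. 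The coproduct inclusion $M \hookrightarrow M \oplus \mathcal{L}(\mathbb{Z}_{2})$ composed with $L \hookrightarrow M$ produces the desired injective frame homomorphism $L \hookrightarrow \mathcal{L}(\mathbb{Z}_{2})$. I expect the main obstacle to be the point-free Urysohn embedding of the first step, which demands careful handling of bounded continuous frame maps; step two is routine once the generators and relations are unwound, and the final absorption step is exactly where the paper's characterization of $\mathcal{L}(\mathbb{Z}_{2})$ via $cbd$ delivers the payoff.
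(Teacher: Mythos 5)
Your overall architecture agrees with the paper's for the first two stages: you realize $L$ as a quotient of the Hilbert cube $\EuScript{H}={^{\mathbb{N}}}\mathcal{L}[0,1]$ via a Urysohn-type argument, and you embed $\EuScript{H}$ into $\mathcal{L}(\mathbb{Z}_2)$ by building the dyadic-expansion morphism $\mathcal{L}[0,1]\to\mathcal{L}(\mathbb{Z}_2)$, passing to countable coproducts, and using ${^{\mathbb{N}}}\mathcal{L}(\mathbb{Z}_2)\cong\mathcal{L}(\mathbb{Z}_2)$ --- this is exactly the paper's Corollary \ref{mor4}, Lemma \ref{inj}, and Propositions \ref{cantor}--\ref{end}. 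Where you genuinely diverge is the final splicing step. The paper notes that $L$, being compact, is a \emph{closed} quotient $\uparrow x$ of $\EuScript{H}$, transports it to the closed quotient $\uparrow\theta(x)$ of $\mathcal{L}(\mathbb{Z}_2)$, and then proves by an explicit combinatorial construction (Lemma \ref{fbasic}, Proposition \ref{endo}, Corollary \ref{retrac}) that every non-trivial closed quotient of $\mathcal{L}(\mathbb{Z}_2)$ is a retract of it; the section $\uparrow\theta(x)\to\mathcal{L}(\mathbb{Z}_2)$ is the desired embedding. You instead absorb the quotient $M$ back into the Cantor frame by forming $M\oplus\mathcal{L}(\mathbb{Z}_2)$ and invoking the characterization Theorem \ref{c2}. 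Your route is shorter and is the standard ``multiply by the Cantor set'' trick from topology; its cost is that it leans on Theorem \ref{c2} (whose proof the paper only sketches) and requires you to actually verify that $cbd_{M\oplus\mathcal{L}(\mathbb{Z}_2)}(0)=0$, which is not automatic from one factor and is easiest to see via spatiality of compact regular coproducts. The paper's route is longer but constructive and independent of the characterization theorem.

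One justification in your pushout step needs repair: ``pushout of an injection along a surjection is an injection'' is \emph{false} for general frame surjections --- dually, pullbacks of localic surjections are not stable in general. It is true here only because the surjection $\EuScript{H}\to L$ is a \emph{closed} quotient (equivalently, because all maps between compact regular locales are proper and proper surjections are pullback-stable): the pushout of $\theta$ along $\EuScript{H}\to\,\uparrow x$ is concretely $\uparrow x\to\,\uparrow\theta(x)$, $a\mapsto\theta(a)$, which is injective because $\theta$ is. You should say this explicitly rather than appeal to a general stability principle; with that fix, and with the $cbd$ computation supplied, your argument goes through.
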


In this manuscript we provide a point-free version of this theorem (Theorem \ref{Alex-Haus}).
There exists many proofs of the Hausdorff-Alexandroff theorem in the literature, we take inspiration of a proof located in \cite{guide2006infinite}; we implement a \emph{Hilbert-cube trick} characterizing every compact metric frame as a closed quotient of the Hilbert cube frame and showing an injective frame morphism from the Hilbert cube frame to the frame of $2$-adic integers.

In section \ref{prel} we present the necessary background used freely in our investigation.

Section \ref{Cantor} contains the presentation of the Cantor frame $\mathcal{L}(\mathbb{Z}_{p})$. We give fundamental properties of this frame such as spatiality and perfectness. Also we observe that $\mathcal{L}(\mathbb{Z}_{p})$ is a complete uniform metrizable frame. At the end of this section, we give a direct characterization of $\mathcal{L}(\mathbb{Z}_{p})$ (Theorem \ref{c2}).

In section \ref{haus} we give a point-free version of the Alexandroff-Hausdorff theorem. We provide an injective morphism from the frame of $[0,1]$ into the frame of $2$-adic integers via the dyadic rationals. We thus extent this morphism to an injective frame morphism from the Hilbert cube frame.

\section{Preliminaries}\label{prel}
A \emph{frame} is a complete lattice $L$ satisfying the distributive law
\[ \big(\bigvee A \big)\wedge b=\bigvee\{a\wedge b \mid a\in A\}\] for any subset $A\subseteq L$ and any $b\in L$. We denote its top element by $1$ and its bottom element by $0$.
A frame is called \emph{spatial} if it is isomorphic to $\Omega(X)$ for some space $X$. A \emph{frame homomorphism} is a map $h\colon L\rightarrow M$ satisfying 
\begin{equation*} \label{frmhom}
h(0)=0, \, h(1)=1, \, h(a\wedge b)=h(a)\wedge h(b),\text{ and } h\big( \bigvee_{i\in J} a_i\big)=\bigvee_{i\in J} h(a_i), \text{for any set} J
\end{equation*}
where $L$ and $M$ are frames. Also, $h$ is \emph{dense} if $h(x)=0$ implies $x=0$.

The category \textbf{Frm} of frames, with objects frames and morphisms frame moprhisms. We have the natural functor $\Omega\colon \textbf{Top}\rightarrow\textbf{Frm}$ from the category \textbf{Top} of topological spaces to the category \textbf{Frm} of frames is contravariant; we define the category of locales \textbf{Loc} as the dual category of \textbf{Frm}. Since a frame homomorphism $h\colon M\to L$ preserves all suprema, it has a uniquely associated right Galois adjuntion $h_*\colon L\to M$. Morphisms in \textbf{Loc} are represented as the infima preserving $f\colon L\to M$ such that the corresponding left adjoint $f^*\colon M\to L$ preserve finite meets. Such maps correspond to the morphims in \textbf{Loc}.

Let $L$ be a frame and set $\pt(L):=\{\alpha\colon L\to \boldsymbol{2}\mid \alpha \textit{ is a frame homomorphism}\}$ and for $a\in L$, set $U_a=\{\alpha\in\pt L \mid \alpha(a)=1\}$. It is straightforward to verify that 
\[U_0=\varnothing, \,\,\, U_1=\pt L, \,\,\, U_{a\wedge b}=U_a \cap U_b, \,\,\, U_{\bigvee a_i}=\bigcup U_{a_i}\]
and hence $\{U_a\mid a\in L \}$ is indeed a topology on $\pt(L)$. Moreover, for a frame homomorphism $h\colon L\to M$, define $\pt(h)\colon\pt(M) \to \pt(L)$ by setting $\alpha \mapsto \alpha h$. Since we have $(\pt (h))^{-1}[U_a]=U_{h(a)}$, $\pt(h)$ is a continuous map. This shows that $\pt$ is contravariant functor from $\textbf{Frm}$ to $\textbf{Top}$.

The \emph{negation} or \emph{pseudocomplement} of an element $a$ in a frame $L$ is the element $\neg a=\bigvee\{x\in L \mid x\wedge a=0\}$. An element $a$ in $L$ is said to be \emph{complemented} or \emph{clopen} if there is an element $b\in L$ such that $b\wedge a=0$ and $b\vee a=1$; this element $b$ is called the complement of $a$ in fact $b=\neg a$. A frame $L$ is said to be \emph{$0$-dimensional} if $a=\bigvee\big\{b\mid b\leq a, \, b \text{ is complemented}\big\}$ for each $a\in L$. A $0$-dimensional frame $L$ is called \emph{ultranormal} if whenever $a\vee b=1$, there is a complemented element $c\in L$ such that $c\leq a$ and $c'\leq b$. A \emph{partition} of a frame $L$ is a subset $P\subseteq L\smallsetminus\{0\}$ satisfying $\bigvee P=1$ and $a\wedge b=0$ whenever $a,b\in P, \, a\neq b$. A $0$-dimensional frame $L$ is said to be \emph{ultraparacompact} if whenever $C$ is a cover of $L$, there is a partition $P$ that refines $C$.

We provide a brief description of the coproducts of frames as in \cite{PicadoPultr}. Let $\{F_{i}\mid i\in\EuScript{K}\}$ be a family of frames and consider the coproduct of semilattices $\prod'_{i\in\EuScript{K}}F_{i}$ with $\kappa_{j}\colon F_{j}\rightarrow\prod'_{i\in\EuScript{K}}F_{i}$ for each $j\in\EuScript{K}$, see \cite[IV.4]{PicadoPultr}. Then, we apply the down-set functor $\mathcal{D}(\prod'_{i\in\EuScript{K}}F_{i})$ and observe that we have a morphism
$\alpha\colon \prod'_{i\in\EuScript{K}}F_{i}\rightarrow\mathcal{D}(\prod'_{i\in\EuScript{K}}F_{i})$, $\alpha(x)=\downarrow x$. Take the relation $R$ on $\mathcal{D}(\prod'_{i\in\EuScript{K}}F_{i})$ as in \cite[IV. 4.3]{PicadoPultr} to obtain the frame $\mathcal{D}(\prod'_{i\in\EuScript{K}}F_{i})/R=\bigoplus_{i}F_{i}$. Note that we have a quotient morphism $\mu\colon\mathcal{D}(\prod'_{i\in\EuScript{K}}F_{i})\rightarrow\bigoplus_{i}F_{i}$ given by the nucleus induced by the saturated relation $R$. For each $j\in\EuScript{K}$ we have a frame morphism \[\iota_{j}=\mu\alpha\kappa_{j}\colon F_{j}\rightarrow\bigoplus_{i}F_{i},\] 
the couple $(\bigoplus_{i}F_{i},\iota_{i})_{i}$ constitute the coproduct of the family. Whenever $F=F_{i}$ for all $i\in\EuScript{K}$ we write $^{ \EuScript{K}}F$.

The frame $\mathcal{L}[0,1]$ is an upper section of the frame $\mathcal{L}(\mathbb{R})$, the frame of the reals. This frame is generated by the partial ordered sets of open intervals $(p,q)$ with $p,q\in\mathbb{Q}$ (as in the classical case $\mathcal{L}(\mathbb{R})$ is a completion of that partial ordered set) module some relations. The frame of $[p,q]$ is defined as $\uparrow\hspace{-.05in}((-,p)\vee (q,-))$, for details see \cite[Chapter XIV]{PicadoPultr}. Another useful way to construct $\mathcal{L}(\mathbb{R})$, and hence $\mathcal{L}[0,1]$, is given in \cite[XIV 1.2.2]{PicadoPultr}, we use that equivalent construction on section \ref{haus} for the sake of completeness we present it here.

\begin{thm}\label{reals}
The frame of the reals $\mathcal{L}(\mathbb{R})$ is generated by $(p,-)$ and $(-,q)$ with $p,q\in\mathbb{Q}$ subject to the following relations.

\begin{itemize}
\item[(1)] $(p,-)\wedge(-,q)=0$ whenever $p\geq q$.
\item[(2)] $(p,-)\vee(-,q)=1$ whenever $p<q$.
\item[(3)] $(p,-)=\bigvee\{(r,-)\mid r>p\}$.

\item[(4)] $(-,q)=\bigvee\{(-,s)\mid s<q\}$.
\item[(5)] $\bigvee\{(p,-)\mid p\in\mathbb{Q}\}=1$

\item[(6)] $\bigvee\{(-,q)\mid q\in\mathbb{Q}\}=1$
\end{itemize}
\end{thm}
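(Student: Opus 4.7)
The plan is to show the frame $L$ given by the presentation in the statement is isomorphic to the Banaschewski frame $\mathcal{L}_B(\mathbb{R})$, generated by symbols $\langle p,q\rangle$ with $p,q\in\mathbb{Q}$ subject to (B1) $\langle p,q\rangle\wedge\langle r,s\rangle=\langle p\vee r,\,q\wedge s\rangle$, (B2) $\langle p,q\rangle\vee\langle r,s\rangle=\langle p,s\rangle$ whenever $p\le r<q\le s$, (B3) $\langle p,q\rangle=\bigvee\{\langle r,s\rangle:p<r<s<q\}$, and (B4) $\bigvee\{\langle p,q\rangle:p<q\}=1$. I would construct two mutually inverse frame morphisms on generators.

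For the forward map $f\colon\mathcal{L}_B(\mathbb{R})\to L$ I set $f(\langle p,q\rangle):=(p,-)\wedge(-,q)$. Well-definedness via the universal property requires checking each Banaschewski relation in $L$. (B1) and (B3) follow routinely using (3), (4), and the monotonicities $(r,-)\le(p,-)$ for $p\le r$ and $(-,q)\le(-,s)$ for $q\le s$ (themselves consequences of (3) and (4)); (B4) reduces via distributivity to (5) and (6), with degenerate terms killed by (1). The crucial case is (B2): for $p\le r<q\le s$, relation (2) gives $(r,-)\vee(-,q)=1$, so
\begin{align*}
(p,-)\wedge(-,s)&=(p,-)\wedge(-,s)\wedge\bigl((r,-)\vee(-,q)\bigr)\\
&=\bigl((r,-)\wedge(-,s)\bigr)\vee\bigl((p,-)\wedge(-,q)\bigr),
\end{align*}
where the second line uses the two monotonicities; this is precisely $f(\langle r,s\rangle)\vee f(\langle p,q\rangle)$.

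For the backward map $g\colon L\to\mathcal{L}_B(\mathbb{R})$ I set $g((p,-)):=\bigvee_{r>p}\langle p,r\rangle$ and $g((-,q)):=\bigvee_{s<q}\langle s,q\rangle$. Relations (1), (3), (4), (5), (6) are straightforward from the Banaschewski axioms. The main obstacle, as anticipated, is (2): showing $g((p,-))\vee g((-,q))=1$ whenever $p<q$. By (B4) it suffices to dominate every $\langle r,s\rangle$. If $r\ge p$, then $\langle r,s\rangle\le\langle p,s\rangle\le g((p,-))$; if $s\le q$, then $\langle r,s\rangle\le\langle r,q\rangle\le g((-,q))$; otherwise $r<p<q<s$, and applying (B2) with $r\le p<q\le s$ yields $\langle r,s\rangle=\langle r,q\rangle\vee\langle p,s\rangle$, with both pieces now dominated.

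Finally, $fg$ and $gf$ are verified to be the identity on generators: for example, $fg((p,-))=(p,-)\wedge\bigvee_{r>p}(-,r)=(p,-)\wedge 1=(p,-)$, where $\bigvee_{r>p}(-,r)=1$ is extracted from (6) by noting that every $(-,q')$ with $q'\le p$ lies below some $(-,r)$ with $r>p$ by (4); and $gf(\langle p,q\rangle)=\bigvee_{r>p,\,s<q}\langle\max(p,s),\min(r,q)\rangle=\langle p,q\rangle$, the last equality by inspection (the term indexed by $s\le p$, $r\ge q$ is already $\langle p,q\rangle$, and all other terms are $\le\langle p,q\rangle$ by (B1)). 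The join relation (2), which encodes the connectivity of $\mathbb{R}$, is the nontrivial content that mediates both directions of the argument.
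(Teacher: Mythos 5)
Your proposal is correct: both homomorphisms are well defined on generators (each relation of one presentation is verified in the other), the compositions are checked to be identities, and you correctly isolate relation (2) versus (B2) as the only non-routine step in each direction. The paper itself offers no proof of this theorem — it is quoted from \cite[XIV.1.2.2]{PicadoPultr} — and your argument is essentially the standard comparison of the two presentations given in that reference.
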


Note that replacing the set of rational numbers $\mathbb{Q}$ by a countable dense subset $D$ of the real numbers in the above definition results in a frame, $\mathcal{L}(\mathbb{R})_D$, isomorphic to $\mathcal{L}(\mathbb{R})$. 

Given two real numbers $p,q \in D$ with $p < q$ we consider the frame of the closed interval $\mathcal{L}[p,q]$ as the closed quotient $\uparrow(p,q)$.

Besides, we recall the dyadic rationals consisting of all rational numbers $p/q$ with $q = 2^n$ for some nonnegative integer $n$. Clearly, this is a dense subset of the real numbers (see \cite{mcdonald1999course} ). 

In section 4 we make use of the dyadic rationals when constructing a frame morphism between the $\mathcal{L}[0,1]$ and $\mathcal{L}(\mathbb{Z}_2)$ where we define $\mathcal{L}[0,1]$ with the dyadic rationals instead of the rational numbers.

In Cantor's original diagonalization argument on the uncoutability of the real line, he warned about real numbers with more than one decimal expansion, we detail the binary case in the following.

\begin{lem}
Let $(u_i)$ be a sequence with values in $[0,1]$ then $\sum_{i > 0} \frac{u_i}{2^i} = 1$ if and only if $u_i = 1$ for all $i$.
\end{lem}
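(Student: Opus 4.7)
The plan is to treat the two implications separately, both reducing to the geometric-series identity $\sum_{i \geq 1} 2^{-i} = 1$. For the \emph{if} direction, substituting $u_i = 1$ for every $i$ collapses the series into $\sum_{i>0} 2^{-i}$, which equals $1$.

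For the \emph{only if} direction I would argue by contrapositive: assume some index $j$ satisfies $u_j < 1$, and show that the sum is strictly less than $1$. Since $u_i \in [0,1]$, each term $u_i/2^i$ lies in $[0, 2^{-i}]$, so both $\sum u_i/2^i$ and $\sum 2^{-i}$ converge absolutely. This legitimizes the term-by-term rewriting
\[
\sum_{i>0} \frac{u_i}{2^i} \;=\; 1 \;-\; \sum_{i>0} \frac{1-u_i}{2^i}.
\]
Every summand of the correction series is non-negative because $1 - u_i \geq 0$, and the $j$-th term $(1-u_j)/2^j$ is strictly positive, so the correction is bounded below by $(1-u_j)/2^j > 0$. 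Hence the total sum is at most $1 - (1-u_j)/2^j < 1$, as needed.

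I do not anticipate any genuine obstacle: the statement reduces to monotonicity of absolutely convergent non-negative series together with a single strict inequality at the distinguished index. The only point requiring care is to use the hypothesis $u_i \in [0,1]$ at both ends, ensuring both $u_i/2^i \geq 0$ and $1 - u_i \geq 0$ so that the comparison and the term-by-term subtraction are valid.
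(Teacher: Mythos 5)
Your proof is correct and follows essentially the same route as the paper: both rest on the identity $\sum_{i>0} u_i/2^i = 1 - \sum_{i>0}(1-u_i)/2^i$ together with the nonnegativity of the terms $(1-u_i)/2^i$. The only cosmetic difference is that you phrase the forward direction contrapositively (exhibiting a strictly positive correction term) while the paper argues directly that a nonnegative series summing to $0$ has all terms zero.
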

\begin{proof}
The converse follows from the convergence of the geometric sequence.
Now suppose that $\sum_{i > 0} \frac{u_i}{2^i} = 1$ then $\sum_{i > 0}\frac{1}{2^i} - \sum_{i > 0}\frac{1}{2^i} = \sum_{i > 0}\frac{1-u_i}{2^i} = 0$, since $u_i \in [0,1]$, then $1-u_i \in [0,1]$, thus $1-u_i = 0$ for all $i$; equivalently, $u_i=1$.
\end{proof}

\begin{lem}
Let $x \in [0,1]$ then and let $(u_i)$ and $(u'_i)$ be two binary sequences where
$$x = \sum_{i > 0}\frac{u_i}{2^i}=\sum_{i > 0}\frac{u'_i}{2^i}$$
then either $(u_i)=(u'_i)$ or there exists an nonnegative integer $g$ where $u_g=1$, $u'_g=0$, and for all $i>g$, $u_i=0$ and $u'_i=0$ or the same with $u$ and $u'$ exchanged.
\end{lem}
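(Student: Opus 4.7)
The plan is to reduce to the previous lemma by a shift-and-subtract argument. First I would assume $(u_i)\neq (u'_i)$ and let $g$ be the smallest index at which the two sequences disagree. After possibly interchanging the roles of $u$ and $u'$, I may take $u_g=1$ and $u'_g=0$. Since $u_i=u'_i$ for $i<g$, the corresponding finite partial sums cancel, and the hypothesis $\sum u_i/2^i=\sum u'_i/2^i$ reduces to the tail identity $\sum_{i\geq g} u_i/2^i=\sum_{i\geq g} u'_i/2^i$.

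Next I would isolate the $g$-th term and scale both sides by $2^g$, obtaining
\[1+\sum_{j>0}\frac{u_{g+j}}{2^{j}}=\sum_{j>0}\frac{u'_{g+j}}{2^{j}}.\]
The right-hand side is bounded above by $\sum_{j>0}2^{-j}=1$ because $u'_{g+j}\in[0,1]$, while the left-hand side is at least $1$ because $u_{g+j}\geq 0$. Equality therefore forces both sides to equal $1$. The first consequence, $\sum_{j>0}u_{g+j}/2^{j}=0$, gives $u_{g+j}=0$ for every $j>0$ by non-negativity of the summands. The second consequence, $\sum_{j>0}u'_{g+j}/2^{j}=1$, is exactly the hypothesis of the preceding lemma applied to the shifted sequence $(u'_{g+j})_{j>0}$, and delivers $u'_{g+j}=1$ for every $j>0$. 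Combined with $u_g=1$, $u'_g=0$, this is the tail structure announced in the statement, and the symmetric case is dealt with by the initial interchange.

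I do not expect any serious obstacle: the whole argument consists in a bound from above by the geometric series and a bound from below by non-negativity, together with a single invocation of the preceding lemma for the all-ones tail. The only care required is the book-keeping at the very start, namely noting that the first index of disagreement exists and that the common head of the two series, being a finite sum, can be subtracted rigorously on both sides.
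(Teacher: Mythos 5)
Your proof is correct and follows essentially the same route as the paper's: locate the first index of disagreement, cancel the common head, rescale by $2^g$, and apply the preceding geometric-series lemma to force the all-ones tail (your variant of splitting into two separate nonnegative sums before invoking that lemma is, if anything, slightly cleaner, since the paper applies it to the difference sequence $u'_{i+g}-u_{i+g}$, which can take the value $-1$ and so falls marginally outside the lemma's stated hypothesis). Note also that the conclusion you reach, $u'_i=1$ for all $i>g$, agrees with the paper's own proof but not with the literal wording of the statement, which says $u'_i=0$ for $i>g$; that is a typo in the statement, not a flaw in your argument.
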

\begin{proof}
If $(u_i)=(u'_i)$ we are done. So assume that $(u_i)\neq(u'_i)$, by the well-ordering principle, there is a minimum non-negative integer $g$ such that $u_g\neq u'_g$, for the sake of argument, let us say that $u_g=1$ and $u'_g=0$ then
$$0 = \sum_{i > 0}\frac{u_i}{2^i}-\sum_{i > 0}\frac{u'_i}{2^i}=\frac{1}{2^g} + \sum_{i>g}\frac{u_i-u'_i}{2^i}$$
factoring $\frac{1}{2^g}$ from the last two terms we get that $$1-\sum_{i>0}\frac{u'_{i+g}-u_{i+g}}{2^i}=0$$ by the above lemma it follows that for all
$i > g$, $u'_i-u_i = 1$, since both are binary sequences it follows that $u'_i = 1$ and $u_i = 0$.
\end{proof}

For last we mention that the elements in $\mathcal{L}[p,q]$ are denoted by $(r,s)$ to mean $((-,p)\vee(q,-))\vee(r,s)$.


\section{The frame of the Cantor set $\mathcal{L}(\Z_p)$ and their Properties} \label{Cantor}

Recall that $\mathbb{Q}_p$ is the completion of $\mathbb{Q}$ with respect to its ultra-metric. In \cite{avila2020frame}, the frame of the $p$-adic numbers $\mathcal{L}(\mathbb{Q}_p)$ is defined by generators and relations among open balls centered on rationals numbers. Besides, $\mathbb{Z}_{p}$ is the ring of integers of $\mathbb{Q}_{p}$ and it is a topological subspace which is homeomorphic to the Cantor set for every $p$, see \citep{Alain}. Following the same construction in \cite{avila2020frame} we introduce the free frame of the $p$-adic integers by the replacing rationals with integers.

\begin{dfn}\label{pad} Let $\mathcal{L}(\mathbb{Z}_p)$ be the frame generated by the elements $B_r(a)$, where $a\in \mathbb{Z}$ and $\displaystyle{r\in |\mathbb{Z}|:=\{p^{-n+1}\mid n\in \mathbb{N}\}}$, subject to the following relations:
\begin{itemize}
\item[(Q$_1$)] $B_r(a)\wedge B_s(b)=0 $ whenever $|a-b|_p\geq r$ and $s\leq r$.
\item[(Q$_2$)] $1=\bigvee \big\{B_r(a): a\in \mathbb{Z},\ r\in|\mathbb{Z}| \big\}$.
\item[(Q$_3$)] $B_r(a)=\bigvee \big\{B_s(b) : |a-b|_p<r, \, s<r,\, b\in\mathbb{Z},\, s\in|\mathbb{Z}|\big\}$.
\end{itemize} 
\end{dfn}
\textbf{Notation:} We will use $B_r\langle a\rangle$ to denote the open ball in $(\mathbb{Z},|\cdot|_p)$ centered at $a$ with radius $r\leq 1$, i.e., $B_r\langle a\rangle:=\{x\in\mathbb{Z} : |x-a|_p<r\}$, and we will use $S_r\langle a \rangle$ to denote the open ball in $(\mathbb{Z}_p,|\cdot|_p)$ centered at $a\in\mathbb{Z}$ with radius $r\leq1$; that is, $S_r\langle a\rangle:=\{x\in\mathbb{Z}_p : |x-a|_p<r\}$.\\  

Many frame-theoretic properties of $\mathcal{L}(\mathbb{Q}_{p})$ are inherited to $\mathcal{L}(\mathbb{Z}_{p})$. Moreover, most of these proofs are similar

\begin{obs} It can be shown that (see \cite{avila2020frame}), for $a,b\in \mathbb{Z}$ and $r,s\in |\mathbb{Z}|$ with $s\leq r$, we have:
\begin{itemize}
\item[(R$_1$)] $B_s(b)\leq B_r(a)$ whenever $|a-b|_p < r$ and $s\leq r$. In particular, we have $B_r(a)=B_r(b)$ whenever $|a-b|_p<r$.
\item[(R$_2$)] $B_r(a)\wedge B_s(b)=B_{s}(a)=B_{s}(b)$ whenever $|a-b|_p<s$. 
\item[(R$_3$)] $B_r(a)\vee B_s(b)=B_{r}(b)=B_{r}(a)$ whenever $|a-b|_p<r$. 
\item[(R$_4$)] If $|a-b|_p<r$, then either $B_s(b)\leq B_r(a)$ or $B_s(b)\geq B_r(a)$.
\item[(R$_5$)] $B_r(a)=\bigvee \{B_{r/p}(a+xp^{n+1})\mid x=0,1,\dots,p-1\}$.
\end{itemize}
\end{obs}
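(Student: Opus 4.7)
My plan is to derive R$_1$ through R$_5$ one after another from the defining relations Q$_1$, Q$_2$, Q$_3$, using only the non-archimedean triangle inequality $|x-y|_p\le\max(|x-z|_p,|z-y|_p)$ as geometric input; each subsequent item reduces to the previous one.

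For R$_1$ I would split on whether $s<r$ or $s=r$. When $s<r$ and $|a-b|_p<r$, the ball $B_s(b)$ appears literally as a term in the join given by Q$_3$ for $B_r(a)$, so $B_s(b)\le B_r(a)$ directly. For the borderline case $s=r$, I would re-expand $B_r(b)$ through Q$_3$ as $\bigvee\{B_t(c)\mid |b-c|_p<r,\;t<r\}$ and observe that the ultrametric inequality yields $|a-c|_p\le\max(|a-b|_p,|b-c|_p)<r$ for each generator; the already-proved $s<r$ case then gives $B_t(c)\le B_r(a)$, hence $B_r(b)\le B_r(a)$. Interchanging the roles of $a$ and $b$ produces the equality $B_r(a)=B_r(b)$.

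Properties R$_2$, R$_3$, R$_4$ are then straightforward corollaries. Under the hypothesis of R$_2$, R$_1$ delivers both $B_s(a)=B_s(b)$ (since $|a-b|_p<s$) and $B_s(b)\le B_r(a)$ (since $|a-b|_p<s\le r$), whence $B_r(a)\wedge B_s(b)=B_s(b)=B_s(a)$; R$_3$ is the dual computation with the join collapsing. For R$_4$ I would dichotomize on $s\le r$ versus $s>r$: in the first sub-case R$_1$ gives $B_s(b)\le B_r(a)$, while in the second R$_1$ applied with the roles of $(r,a)$ and $(s,b)$ exchanged, via $|b-a|_p<r<s$, yields $B_r(a)\le B_s(b)$.

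The main obstacle, and the only step requiring arithmetic beyond the ultrametric inequality, is R$_5$: that $B_r(a)$ is the join of $p$ sub-balls of radius $r/p$ indexed by residues mod $p$. One direction is immediate from R$_1$, since each proposed center lies in $B_r(a)$ and so the corresponding sub-ball of radius $r/p$ is dominated by $B_r(a)$. For the reverse inclusion I would reapply Q$_3$ to $B_r(a)$ and show that each generator $B_t(c)$ of the resulting join sits inside exactly one sub-ball: the integer $c$ has a unique residue modulo the relevant power of $p$ selecting one of the $p$ translates of $a$, and the ultrametric computation places $c$ within $r/p$ of that translate. The condition $t<r$ automatically forces $t\le r/p$ because the value set $|\mathbb{Z}|$ has no element strictly between them, after which R$_1$ furnishes the required inclusion. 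The delicate point is the bookkeeping of radii and $p$-adic valuations of the translates, but no new conceptual ingredient is needed beyond R$_1$ and Q$_3$.
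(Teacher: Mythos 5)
The paper does not actually prove this remark; it delegates the verification to \cite{avila2020frame}, so there is no internal argument to compare against. Your derivation from (Q$_1$)--(Q$_3$) is correct and self-contained: splitting (R$_1$) into the cases $s<r$ (where $B_s(b)$ is literally a joinand in the (Q$_3$) expansion of $B_r(a)$) and $s=r$ (where you re-expand $B_r(b)$ by (Q$_3$) and push each joinand into the expansion of $B_r(a)$ via the ultrametric inequality) is exactly the right mechanism, and (R$_2$)--(R$_4$) do follow formally from (R$_1$) as you indicate. The one point worth flagging concerns (R$_5$) as printed: the relation between $r$ and $n$ is left implicit, and the offsets $a+xp^{n+1}$ are only correct under the normalization $r=p^{-n}$, so that $|xp^{n+1}|_p$ equals $r/p$ exactly for $x\not\equiv 0 \pmod p$ (making the sub-balls pairwise disjoint by (Q$_1$)) while each center still lies within $r$ of $a$; with $r=p^{-n+1}$ as in the definition of $|\mathbb{Z}|$ the centers would have to be $a+xp^{n}$. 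Once that normalization is fixed, your reverse-inclusion bookkeeping --- $|a-c|_p<r$ forces $c\equiv a\pmod{p^{n+1}}$, the residue of $(c-a)/p^{n+1}$ modulo $p$ selects the unique translate containing $c$, and $t<r$ forces $t\le r/p$ by discreteness of the value set --- goes through, so the argument is complete.
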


In this part, we show that the frame $\mathcal{L}(\Z_p)$ is $0$-dimensional, (completely) regular, and compact. In particular, we will see that $\mathcal{L}(\Z_p)$ is ultraparacompact and ultranormal (see, e.g. \cite{Prolla1}). We begin noting an important fact: each generator is complemented.
\begin{prop}
Let $B_r(a)\in\mathcal{L}(\Z_p)$ be a generator. Then $B_r(a)$ is complemented (clopen) and $B_r(a)'=\bigvee\{B_r(b)\,|\, |a-b|_p\geq r\}$.
\end{prop}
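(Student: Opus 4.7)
The plan is to set $c := \bigvee\{B_r(b) \mid |a-b|_p \geq r\}$ and verify directly that $c$ is the complement of $B_r(a)$, i.e.\ that $B_r(a) \wedge c = 0$ and $B_r(a) \vee c = 1$. The pseudocomplement formula $B_r(a)' = c$ then follows automatically since complements in a frame, when they exist, coincide with pseudocomplements.

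For the first identity, I would use frame distributivity to rewrite
\[
B_r(a) \wedge c \;=\; \bigvee\{B_r(a) \wedge B_r(b) \mid |a-b|_p \geq r\},
\]
and then invoke relation (Q$_1$) with $s = r$ to conclude that each term vanishes.

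For the second identity, the strategy is to use (Q$_2$), which writes $1$ as the join of all generators $B_s(b)$, and then to show that every such generator lies below $B_r(a) \vee c$. I would split into two cases according to the size of $s$. When $s \leq r$, either $|a-b|_p < r$ and then (R$_1$) gives $B_s(b) \leq B_r(a)$, or $|a-b|_p \geq r$ and then (R$_1$) together with the definition of $c$ gives $B_s(b) \leq B_r(b) \leq c$. The case $s > r$ is the only one requiring work: I expect this to be the main obstacle. The idea is to rewrite $B_s(b)$ as a join of balls of radius exactly $r$.

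More precisely, I would first establish the auxiliary claim that for $r < s$ in $|\mathbb{Z}|$,
\[
B_s(b) \;=\; \bigvee\{B_r(d) \mid d \in \mathbb{Z},\ |b-d|_p < s\}.
\]
The direction $\geq$ follows from (R$_1$); the direction $\leq$ follows by applying (Q$_3$) to expand $B_s(b)$ as a join of balls $B_t(e)$ with $t < s$ and $|b-e|_p < s$, and then bounding each such $B_t(e)$ by a suitable $B_r(d)$ (using (R$_1$) when $t \leq r$, and iterating the expansion of (Q$_3$) together with the ultrametric inequality $|b-f|_p \leq \max(|b-e|_p, |e-f|_p) < s$ when $r < t < s$). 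Granted this claim, each summand $B_r(d)$ appearing in the decomposition of $B_s(b)$ satisfies either $|a-d|_p < r$, so that $B_r(d) = B_r(a)$ by (R$_1$), or $|a-d|_p \geq r$, so that $B_r(d)$ is one of the joinands defining $c$. In either case $B_r(d) \leq B_r(a) \vee c$, and taking the join yields $B_s(b) \leq B_r(a) \vee c$, completing the proof.
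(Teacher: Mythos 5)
Your proposal is correct and follows the same overall strategy as the paper's proof: kill the meet with (Q$_1$) plus frame distributivity, then reduce the identity $B_r(a)\vee c=1$ via (Q$_2$) to showing that every generator $B_s(b)$ lies below $B_r(a)\vee c$, splitting on whether $s\leq r$. The only divergence is in the case $s>r$, where the paper simply iterates (R$_5$) to write $B_s(b)$ as a finite join of radius-$r$ balls and then reuses the $s\leq r$ argument, whereas you re-derive an equivalent decomposition from (Q$_3$), (R$_1$), and the ultrametric inequality --- your route is valid (the iteration terminates because $|\mathbb{Z}|$ contains only finitely many radii between $r$ and $s$), but (R$_5$) delivers the needed decomposition in one line.
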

\begin{proof} By (Q$_1$), we have
\[B_r(a)\wedge \bigvee\{B_r(b)\mid |a-b|_p\geq r\}=\bigvee\{B_r(a)\wedge B_r(b)\mid |a-b|_p\geq r\}=0.\] 
Now, let $c\in \Z$ and $t\in|\Z|$. First, assume $t\leq r$. Then $|a-c|_p\geq r$ implies that $B_t(c)\leq \bigvee\{B_r(b)\,|\, |a-b|_p\geq r\}$, and $|a-c|_p< r$ implies $B_t(c)\leq B_r(a)$. Thus, for any $c\in\Z$ and $t\in|\Z|$ with $t\leq r$, we have \[B_t(c)\leq B_r(a)\vee \bigvee\{B_r(b)\,|\, |a-b|_p\geq r\}.\] Note that if $t>r$, then (R$_5$) implies that $\displaystyle{B_t(c)=\bigvee_{i=1}^n \big\{B_r(c_i) \big\}}$ for some $c_i\in\Z$, and by the above argument, we have $\displaystyle{B_t(c)\leq B_r(a)\vee \bigvee\{B_r(b)\,|\, |a-b|_p\geq r\}}$. It follows that, in either case, $\displaystyle{B_t(c)\leq B_r(a)\vee \bigvee\{B_r(b)\,|\, |a-b|_p\geq r\}}$, and by (Q$_2$) we have that $\displaystyle{B_r(a)\vee \bigvee\{B_r(b)\mid |a-b|_p\geq r\}=1.}$
\end{proof}
\begin{cor}
The frame $\mathcal{L}(\Z_p)$ is $0$-dimensional.
\end{cor}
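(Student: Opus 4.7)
The plan is to combine the preceding proposition---each generator $B_r(a)$ is complemented---with the general fact that, in a frame presented by generators and relations, every element admits a \emph{join-normal form}
\[ c = \bigvee_i \bigwedge_{j \in F_i} B_{r_{ij}}(a_{ij}) \]
with each $F_i$ finite. This follows from the construction of the free frame on a generating set as the downset lattice of the free meet-semilattice (as recalled in the Preliminaries via the coproduct construction), and passes to any quotient, so in particular to $\mathcal{L}(\Z_p)$.

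Once such a representation is in hand, the key observation I would deploy is that each finite meet of generators is again complemented: finite meets of complemented elements are complemented, with complement obtained by De Morgan as the join of the individual complements. In our specific setting one can even be more concrete, since (R$_1$), (R$_2$) and (Q$_1$) force every binary meet of generators to collapse to something very simple---if $s \leq r$ then $B_r(a) \wedge B_s(b) = B_s(b)$ when $|a-b|_p < r$ and $B_r(a) \wedge B_s(b) = 0$ when $|a-b|_p \geq r$---so iterating shows that any finite meet of generators is either $0$ or a single generator, hence in particular complemented.

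From there the corollary is immediate: $c$ is exhibited as a join of complemented elements each lying below $c$, giving
\[ c \leq \bigvee\{b \in \mathcal{L}(\Z_p) \mid b \leq c,\ b \text{ is complemented}\}, \]
while the reverse inequality is automatic. This is precisely the definition of $0$-dimensionality. I do not foresee a genuine obstacle; the only point that deserves a brief sentence is the invocation of the join-normal form, since $0$-dimensionality is otherwise a one-line consequence of the preceding proposition.
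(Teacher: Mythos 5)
Your argument is correct and is essentially the fleshed-out version of what the paper intends: the corollary is stated without proof precisely because, once each generator is complemented and every element is a join of generators (finite meets of generators collapsing to $0$ or a single generator via (R$_1$) and (Q$_1$), as you note), $0$-dimensionality is immediate. No gap.
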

\begin{cor}
$\mathcal{L}(\Z_p)$ is completely regular.
\end{cor}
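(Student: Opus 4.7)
The plan is to deduce complete regularity directly from the $0$-dimensionality established in the preceding corollary, via the standard fact that every $0$-dimensional frame is completely regular. Recall that $\mathcal{L}(\mathbb{Z}_p)$ is completely regular precisely when for every $a\in \mathcal{L}(\mathbb{Z}_p)$ we have $a=\bigvee\{b\in \mathcal{L}(\mathbb{Z}_p)\mid b\prec\!\prec a\}$, where $b\prec\!\prec a$ means there is an interpolating family $(c_r)_{r\in D}$ indexed by a dense subset $D\subseteq[0,1]$ with $c_0=b$, $c_1=a$, and $c_r\prec c_s$ (rather below) whenever $r<s$.

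My first step is to show that whenever $c\in \mathcal{L}(\mathbb{Z}_p)$ is complemented and $c\leq a$, one already has $c\prec\!\prec a$. Since $c$ is complemented, its pseudocomplement $c'$ satisfies $c\wedge c'=0$ and $c\vee c'=1$, so in particular $c'\vee c=1$ gives $c\prec c$, and $c'\vee a\geq c'\vee c=1$ gives $c\prec a$. Hence the constant scale $c_r=c$ for $0\leq r<1$ together with $c_1=a$ witnesses $c\prec\!\prec a$, since every required inequality $c_r\prec c_s$ with $r<s<1$ reduces to $c\prec c$, and the inequalities involving $c_1=a$ reduce to $c\prec a$.

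Next I would combine this observation with the previous corollary. For every $a\in \mathcal{L}(\mathbb{Z}_p)$, $0$-dimensionality yields
\[a=\bigvee\{c\leq a\mid c \text{ complemented}\},\]
and the previous step upgrades each such $c$ to $c\prec\!\prec a$. Therefore $a=\bigvee\{c\mid c\prec\!\prec a\}$, which is precisely complete regularity.

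I do not anticipate a real obstacle: the only subtlety is verifying that complementedness implies $c\prec c$, which in turn makes the constant interpolating scale work. Once that is in hand, the conclusion is immediate from the previous corollary, and the proof is essentially a short formal argument rather than anything requiring new computations specific to the generators $B_r(a)$.
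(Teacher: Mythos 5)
Your proof is correct and follows essentially the same route as the paper's: both rest on the observation that a complemented element $c\leq a$ satisfies $c\prec c\prec a$, hence $c\prec\!\prec a$, and then invoke $0$-dimensionality (the paper carries this out on the generators $B_r(a)$ using (Q$_3$), while you do it for arbitrary elements directly). The only cosmetic difference is that you exhibit the constant interpolating scale explicitly, whereas the paper argues that $\prec$ interpolates and therefore coincides with $\prec\!\prec$.
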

\begin{proof}
First, note that if $B_s(b)\leq B_r(a)$, then $B_s(b)'\geq B_r(a)'$ and
\begin{align*} 
B_r(a)\vee B_s(b)'\geq B_r(a)\vee B_r(a)'=1. 
\end{align*}
Therefore $B_s(b)\prec B_r(a)$. In particular, $B_r(a)\prec B_r(a)$.
It follows immediately that $\prec$ interpolates and since $\prec \hspace{-.05in} \prec$ is the largest interpolative relation contained in $\prec$, we must have that $\prec = \prec \hspace{-.05in} \prec$. Then, by remark (Q$_3$), we have that \begin{center} $B_r(a)= \bigvee\{B_s(b):B_s(b)\leq B_r(a)\}=\bigvee\{B_s(b):B_s(b)\prec \hspace{-.05in} \prec B_r(a)\}.$ \end{center}
\end{proof}

Let $L$ be a $0$-dimensional frame. Recall that $L$ is said to be \emph{ultranormal} if whenever $a\vee b=1$, there is a complemented element $c\in L$ such that $c\leq a$ and $c'\leq b$, and $L$ is \emph{ultraparacompact} if whenever $C$ is a cover of $L$ there is a partition $P$ that refines $C$. 

\begin{prop}
The frame $\mathcal{L}(\Z_p)$ is ultranormal and ultraparacompact.
\end{prop}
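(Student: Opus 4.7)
The plan is to reduce both properties to the finite case via compactness (established earlier in this section) and to exploit the fact that the generators $B_r(a)$ are complemented, so that finite joins and finite meets of complemented elements are again complemented.

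For ultranormality, suppose $a\vee b=1$. By $0$-dimensionality I would write $a=\bigvee_i B_{r_i}(a_i)$ and $b=\bigvee_j B_{s_j}(b_j)$ as joins of generators. Then $\bigvee_i B_{r_i}(a_i)\vee\bigvee_j B_{s_j}(b_j)=1$, and compactness yields a finite subcover
\[B_{r_1}(a_1),\dots,B_{r_n}(a_n),\ B_{s_1}(b_1),\dots,B_{s_m}(b_m).\]
Setting $c:=B_{r_1}(a_1)\vee\cdots\vee B_{r_n}(a_n)$, the element $c$ is complemented (finite join of complemented generators) and $c\leq a$. From $c\vee\bigvee_{j=1}^m B_{s_j}(b_j)=1$ the standard calculation $c'=c'\wedge 1=c'\wedge\bigvee_j B_{s_j}(b_j)\leq \bigvee_j B_{s_j}(b_j)\leq b$ delivers the required complemented element.

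For ultraparacompactness, given a cover $C$, I first refine it to a cover $C'$ by generators by replacing each $a\in C$ with the family of generators below $a$ provided by $0$-dimensionality; this preserves refinement. Compactness then extracts a finite subcover $B_1,\dots,B_n\in C'$. I would build a partition by the usual disjointification: $p_1:=B_1$ and $p_k:=B_k\wedge B_1'\wedge\cdots\wedge B_{k-1}'$ for $2\leq k\leq n$. Each $p_k$ is complemented (finite meet of complemented elements) and satisfies $p_k\leq B_k$. Pairwise disjointness is immediate since for $j<k$ one has $p_j\leq B_j$ and $p_k\leq B_j'$, so $p_j\wedge p_k=0$. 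A short induction shows $p_1\vee\cdots\vee p_k=B_1\vee\cdots\vee B_k$, so $\bigvee_k p_k=1$. Discarding the zero terms gives a partition refining $C'$, hence $C$.

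The only real work is the reduction to the finite case: once compactness is invoked, both arguments are the standard Boolean-algebra manipulations with complemented elements, and no additional structural property of $\mathcal{L}(\Z_p)$ beyond $0$-dimensionality and compactness is needed. The main obstacle is therefore entirely absorbed by the (separately proved) compactness of $\mathcal{L}(\Z_p)$.
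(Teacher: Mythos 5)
Your argument is correct, but it takes a genuinely different route from the paper's. You derive both properties from compactness plus the complementedness of the generators: a finite subcover, a finite join of complemented elements for ultranormality, and the standard disjointification $p_k=B_k\wedge B_1'\wedge\cdots\wedge B_{k-1}'$ for ultraparacompactness. All of these steps check out (finite joins and meets of complemented elements are complemented, the induction $p_1\vee\cdots\vee p_k=B_1\vee\cdots\vee B_k$ is valid by distributivity, and the edge cases are harmless), and as you note the argument really proves the more general fact that every compact $0$-dimensional frame is ultranormal and ultraparacompact. The paper instead makes no use of compactness here: it gets ultraparacompactness directly from the ultrametric structure of the generators (any two balls are either disjoint or nested, so a cover by generators already admits a disjoint refinement --- this is what the paper's rather terse ``every generator is a finite join of disjoint generators'' is gesturing at), and then deduces ultranormality from ultraparacompactness together with regularity. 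One thing you should fix: compactness is \emph{not} established earlier in the section --- Corollary \ref{cpt} comes after this proposition in the text. Since the compactness proofs (via the closed sublocale of $\mathcal{L}(\mathbb{Q}_p)$, or via K\"onig's lemma) do not rely on ultranormality or ultraparacompactness, there is no circularity, but your proof would require either reordering the statements or an explicit forward reference; the paper's arrangement avoids this dependency entirely, which is what its compactness-free argument buys.
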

\begin{proof}
Since every generator in $\mathcal{L}(\Z_p)$ can be written as a finite join of disjoint generators, it follows immediately that $\mathcal{L}(\Z_p)$ is ultraparacompact.
Now, to verify that $\mathcal{L}(\Z_p)$ is ultranormal, let $a,b\in \mathcal{L}(\Z_p)$ and suppose that $a\vee b=1$. Set $C=\{x\in \mathcal{L}(\Z_p)\mid x\prec a\}$, then $\bigvee C\vee b=a\vee b=1$ by regularity of $\mathcal{L}(\Z_p)$. That is, $C\cup \{b\}$ is a cover of $\mathcal{L}(\Z_p)$. Since $\mathcal{L}(\Z_p)$ is ultraparacompact, this cover has a refinament which is a clopen partition of $\mathcal{L}(\Z_p)$, say $\{a_i\mid i\in I\}$. Setting $c=\bigvee\{a_i\mid a_i\leq b\}$, we obviously have that $c$ is complemented, $c\leq b$, and $c'\leq a$. Therefore, $\mathcal{L}(\Z_p)$ is ultranormal.
\end{proof}

Next we will see that $\mathcal{L}(\mathbb{Z}_{p})$ is a compact frame by identifying this frame with a compact frame obtained from $\mathcal{L}(\mathbb{Q}_{p})$. Also, we will consider a second constructive proof inspired by $n$-ary rooted trees.


\begin{prop}\label{clsubloc}
$\mathcal{L}(\mathbb{Z}_{p})$ is isomorphic to a closed sublocale of $\mathcal{L}(\mathbb{Q}_{p})$.
\end{prop}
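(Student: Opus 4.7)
The plan is to identify $\mathcal{L}(\mathbb{Z}_p)$ with a specific closed sublocale of $\mathcal{L}(\mathbb{Q}_p)$. Since closed sublocales correspond to principal upsets ${\uparrow}c$ with quotient map $x\mapsto x\vee c$, I would take as candidate
$$c := \bigvee\bigl\{B_r(a)\in\mathcal{L}(\mathbb{Q}_p) : a\in\mathbb{Q},\ B_r(a)\cap\mathbb{Z}_p=\emptyset\bigr\};$$
by the ultrametric this is equivalently the join of all generators with $|a|_p>1$ and $r\leq|a|_p$, so $c$ models the open set $\mathbb{Q}_p\smallsetminus\mathbb{Z}_p$.

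I would then build a frame morphism $\phi : \mathcal{L}(\mathbb{Z}_p) \to {\uparrow}c$ by sending each generator $B_r(a)$ (with $a\in\mathbb{Z},\ r\in|\mathbb{Z}|$) to $B_r(a)\vee c$, and verify the defining relations of $\mathcal{L}(\mathbb{Z}_p)$. Relations (Q$_1$) and (Q$_3$) transfer immediately from $\mathcal{L}(\mathbb{Q}_p)$, since they already hold there and both meet and join are compatible with passing to the quotient by $c$. The content is (Q$_2$): one must show $\bigvee_{a\in\mathbb{Z},\,r\in|\mathbb{Z}|}(B_r(a)\vee c)=1$ in $\mathcal{L}(\mathbb{Q}_p)$. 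The argument proceeds by noting every generator $B_s(b)$ of $\mathcal{L}(\mathbb{Q}_p)$ either lies below $c$ (when $B_s(b)\cap\mathbb{Z}_p=\emptyset$), equals $B_s(b')$ for some $b'\in\mathbb{Z}$ when $s\leq 1$ and the ball meets $\mathbb{Z}_p$ (using density of $\mathbb{Z}$ in $\mathbb{Z}_p$ together with (R$_1$)), or, when $s>1$, decomposes via (R$_5$) into a finite join of unit-radius balls, to which the previous two cases apply.

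For the reverse direction I would define an auxiliary morphism $\psi : \mathcal{L}(\mathbb{Q}_p) \to \mathcal{L}(\mathbb{Z}_p)$ on generators by cases: $\psi(B_r(a))=0$ when the ball misses $\mathbb{Z}_p$; $\psi(B_r(a))=B_r(a')$ when $r\leq 1$ and the ball meets $\mathbb{Z}_p$, for any $a'\in\mathbb{Z}$ with $|a-a'|_p<r$ (existence from density of $\mathbb{Z}$ in $\mathbb{Z}_p$, choice-independence from (R$_1$)); and $\psi(B_r(a))=1$ when $r>1$ and the ball contains $\mathbb{Z}_p$. After verifying that $\psi$ respects the defining relations of $\mathcal{L}(\mathbb{Q}_p)$, one observes $\psi(c)=0$ by construction, so the universal property of closed quotients produces a frame morphism $\bar\psi : {\uparrow}c \to \mathcal{L}(\mathbb{Z}_p)$.

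Finally, $\phi$ and $\bar\psi$ are mutually inverse, as can be checked on generators: $\bar\psi\phi(B_r(a))=\psi(B_r(a))=B_r(a)$ for $a\in\mathbb{Z},\ r\in|\mathbb{Z}|$, while $\phi\bar\psi(B_s(b)\vee c)$ recovers $B_s(b)\vee c$ after unpacking the case analysis of $\psi$. I expect the main obstacle to be the patient case-work required to check that $\psi$ preserves relation (Q$_3$) of $\mathcal{L}(\mathbb{Q}_p)$, where the right-hand join ranges over arbitrary rational centers and arbitrary radii from the $p$-adic value group; the essential ingredients throughout are the density of $\mathbb{Z}$ in $\mathbb{Z}_p$ and the ball-rigidity (R$_1$) that ensures the case definition of $\psi$ is canonical.
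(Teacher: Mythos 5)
Your proposal is correct, and it identifies exactly the same closed sublocale as the paper: your $c$ is precisely $B_p(0)'$, the complement in $\mathcal{L}(\mathbb{Q}_p)$ of the generator modelling $\mathbb{Z}_p=B_p\langle 0\rangle$, and your $\phi$ is literally the paper's map $\pi\colon B_r(a)\mapsto B_r(a)\vee B_p(0)'$ into $\uparrow B_p(0)'$. Where you genuinely diverge is in how the isomorphism is certified. The paper asserts that $\pi$ is onto and then checks injectivity only on pairs of generators, by meeting both sides with $B_p(0)$ and cancelling; it never produces an inverse. You instead construct an explicit two-sided inverse $\bar\psi$ by first defining a ``restriction to $\mathbb{Z}_p$'' morphism $\psi\colon\mathcal{L}(\mathbb{Q}_p)\to\mathcal{L}(\mathbb{Z}_p)$ that kills every ball missing $\mathbb{Z}_p$, and then factoring through the closed quotient via $\psi(c)=0$. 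This costs you the relation-checking for $\psi$, which you defer, but it buys a fully watertight argument: injectivity of a frame homomorphism cannot in general be read off from its values on generators, so your route closes a step the paper's proof leaves implicit. The deferred verification does go through with exactly the tools you name: for (Q$_3$) of $\mathcal{L}(\mathbb{Q}_p)$ the only substantive case is a ball of radius at most $1$ meeting $\mathbb{Z}_p$, where density of $\mathbb{Z}$ in $\mathbb{Z}_p$ together with (R$_1$) shows the image of the right-hand join dominates the full (Q$_3$) join for $B_r(a')$ in $\mathcal{L}(\mathbb{Z}_p)$, while for $r>1$ with $\mathbb{Z}_p\subseteq B_r\langle a\rangle$ the right-hand side already contains either $B_p(0)$ or every integer-centred ball of radius at most $1$, hence equals $1$ by (Q$_2$). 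One small caveat: your claim that (Q$_3$) ``transfers immediately'' for $\phi$ also needs this same density-plus-(R$_1$) step, since the (Q$_3$) join in $\mathcal{L}(\mathbb{Z}_p)$ ranges only over integer centres and radii in $|\mathbb{Z}|$, a proper subfamily of the corresponding join in $\mathcal{L}(\mathbb{Q}_p)$.
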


\begin{proof}
Consider the map $\pi: \mathcal{L}(\mathbb{Z}_{p}) \to \,\uparrow \! B_p(0)'$ defined on generators  $$\pi(B_r(a))=B_r(a)\vee B_p(0)'$$ for each $a \in \mathbb{Z}$ and $r \in |\mathbb{Z}| = \{p^{-n+1} \mid n \in \mathbb{N}\}$. Clearly, $\pi$ is an onto frame morphism.
Now let $a_1,a_2 \in \mathbb{Z}$ and $r_1,r_2 \in \lvert \mathbb{Z} \rvert$ and suppose that $\pi(B_{r_1}(a_1))=\pi(B_{r_2}(a_2))$, then $B_{r_1}(a_1) \vee B_p(0)' = B_{r_2}(a_2)\vee B_p(0)$ so $$[B_{r_1}(a_1) \vee B_p(0)'] \land B_p(0) = [B_{r_2}(a_2)\vee B_p(0)] \land B_p(0)$$ distributing $$[B_{r_1}(a_1) \land B_p(0)] \vee [B_p(0) \land B_p(0)'] = [B_{r_2}(a_2) \land B_p(0)] \vee [B_p(0) \land B_p(0)']$$ reducing $$B_{r_1}(a_1) \land B_p(0)=B_{r_2}(a_2) \land B_p(0)$$ thus $B_{r_1}(a_1) = B_{r_2}(a_2)$ since $B_{r_1}(a_1), B_{r_2}(a_2) \le B_p(0)$, thus $r_1=r_2$ and $|a_1-a_2|_p < r_1=r_2$ so $B_{r_1}(a_1) = B_{r_2}(a_2)$ in $\mathcal{L}(\mathbb{Z}_{p})$. Therefore, $\pi$ is a frame isomorphism.
\end{proof}

\begin{cor}
$\uparrow \! B_p(0)'$ is compact.
\end{cor}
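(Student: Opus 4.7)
The plan is to reduce the claim to compactness of the single generator $B_{p}(0)$ as an element of $\mathcal{L}(\mathbb{Q}_{p})$, via the isomorphism $\mathcal{L}(\mathbb{Z}_{p})\cong\, \uparrow\! B_{p}(0)'$ furnished by the preceding proposition. In this way the corollary becomes a transparent consequence of a single "pointfree Heine--Borel" input about $\mathcal{L}(\mathbb{Q}_{p})$.

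First I would record the following general frame-theoretic fact and give a two-line proof of it: if $a$ is a compact, complemented element of a frame $L$, then the closed sublocale $\uparrow\! a'$ is itself a compact frame. Indeed, given a cover $\{u_{i}\}_{i\in I}\subseteq\, \uparrow\! a'$ of the top $1$, the estimate $a\leq 1 = \bigvee_{i} u_{i}$ together with compactness of $a$ in $L$ gives a finite $F\subseteq I$ with $a\leq \bigvee_{i\in F} u_{i}$; since every $u_{i}\in\,\uparrow\! a'$ satisfies $u_{i}\geq a'$, the nonempty finite join $\bigvee_{i\in F} u_{i}$ also dominates $a'$, hence dominates $a\vee a' = 1$, so it equals $1$.

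Second, I would apply this general fact with $a = B_{p}(0)$ and $L = \mathcal{L}(\mathbb{Q}_{p})$. Complementedness of $B_{p}(0)$ in $\mathcal{L}(\mathbb{Q}_{p})$ is the obvious $\mathcal{L}(\mathbb{Q}_{p})$-analogue of the first proposition of this section. Its compactness as an element of $\mathcal{L}(\mathbb{Q}_{p})$ — essentially the pointfree Heine--Borel theorem for $\mathbb{Z}_{p}$ — is the substantive input, which I would cite from \cite{avila2020frame}, where each generator $B_{r}(a)$ of $\mathcal{L}(\mathbb{Q}_{p})$ is shown to be a compact element.

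The main obstacle, should compactness of $B_{p}(0)$ not already be in hand from the cited source, is to prove it directly. The natural approach is a pointfree K\"onig-style argument on the $p$-ary tree of sub-balls: assuming that some cover of $B_{p}(0)$ by generators admits no finite subcover, use the finite splitting relation (R$_{5}$) at each scale $p^{-n+1}$ to select a descending chain of sub-balls $B_{p^{-n+1}}(c_{n})$ with $|c_{n+1}-c_{n}|_{p} < p^{-n+1}$, none of which admits a finite subcover, and extract a contradiction from (Q$_{3}$) applied at the common refinement of this chain. The delicate step is translating the $p$-adic limit of the centres $(c_{n})$ into a clean frame-theoretic contradiction without invoking the limit point itself, since $\mathcal{L}(\mathbb{Z}_{p})$ has no atoms.
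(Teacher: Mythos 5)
Your argument is correct and is essentially the paper's own proof: the paper likewise extracts a finite subfamily of the cover dominating $B_p(0)$ --- citing $B_p(0)\ll 1$ from continuity of $\mathcal{L}(\mathbb{Q}_p)$ rather than compactness of the element $B_p(0)$, which for this purpose is the same input --- and then joins with $B_p(0)'$ to reach the top. The contingency in your final paragraph is not needed here (though the paper does later give a separate, choice-free K\"onig-style proof of compactness of $\mathcal{L}(\mathbb{Z}_p)$ along the lines you sketch).
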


\begin{proof}
Let $a \in \mathbb{Z}$ and $r \in |\mathbb{Z}|$, then $$B_r(a) \prec B_p(0) \ll 1 \text{ in } \mathcal{L}(\mathbb{Q}_{p})$$ and let $\{x_i\}_{i \in J}$ be a cover in $\uparrow \! B_p(0)'$,  by continuity of $\mathcal{L}(\mathbb{Q}_{p})$, see \citep{Avila}, there are $i_1, \ldots, i_n$ in $J$ such that $B_p(0) \le x_{i_1}\lor \cdots \lor x_{i_n}$, thus $$1 = B_p(0) \lor B_p(0)' \le x_{i_1}\lor \cdots \lor x_{i_n} \lor B_p(0)'.$$ Therefore $\{x_{i_1},\ldots,x_{i_n}\}$ is a finite subcover.
\end{proof}

\begin{cor}\label{cpt}
$\mathcal{L}(\mathbb{Z}_{p})$ is compact.
\end{cor}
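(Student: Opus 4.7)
The plan is to simply invoke the two immediately preceding results together. By Proposition~\ref{clsubloc} we have a frame isomorphism $\pi \colon \mathcal{L}(\mathbb{Z}_p) \to \,\uparrow\! B_p(0)'$, and by the corollary just above, $\uparrow\! B_p(0)'$ is compact. Since compactness is a purely order-theoretic property (every cover of the top element admitting a finite subcover), it is preserved and reflected by frame isomorphisms, so $\mathcal{L}(\mathbb{Z}_p)$ inherits compactness from $\uparrow\! B_p(0)'$.

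Concretely, given any cover $\{a_i\}_{i\in J}$ of $\mathcal{L}(\mathbb{Z}_p)$, I would push it forward through $\pi$ to obtain a cover $\{\pi(a_i)\}_{i\in J}$ of $\uparrow\! B_p(0)'$ (since $\pi$ preserves $1$ and arbitrary joins), extract a finite subcover $\{\pi(a_{i_1}), \ldots, \pi(a_{i_n})\}$ by the preceding corollary, and then pull back through $\pi^{-1}$ to conclude that $a_{i_1} \vee \cdots \vee a_{i_n} = 1$ in $\mathcal{L}(\mathbb{Z}_p)$.

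There is essentially no obstacle here, as the work has already been carried out in Proposition~\ref{clsubloc} and its corollary; this final statement is a one-line consequence. The only thing worth being a little careful about is making sure the reader sees that compactness transfers along the isomorphism $\pi$, which is immediate from its preservation of $1$ and of joins.
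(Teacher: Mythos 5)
Your proposal is correct and matches the paper's (implicit) argument exactly: the corollary is stated without proof precisely because it follows immediately from Proposition~\ref{clsubloc} and the preceding corollary that $\uparrow\! B_p(0)'$ is compact, with compactness transferring along the isomorphism $\pi$. Note that the paper also offers an independent second proof of compactness via a K\"onig's-lemma-style tree argument, but that is a separate alternative, not the route intended for this corollary.
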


\begin{cor} $\mathcal{L}(\mathbb{Z}_{p})$ is spatial.
\end{cor}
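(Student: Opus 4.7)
The plan is to appeal to the classical theorem that every compact regular frame is spatial (see, e.g., \cite{PicadoPultr} or \cite{Johnstone82}). At this point in the paper we have already established that $\mathcal{L}(\mathbb{Z}_p)$ is compact (Corollary \ref{cpt}) and completely regular (hence regular), so spatiality follows immediately as a one-line application of that general result. This is almost certainly the argument intended by the placement of the corollary right after Corollary \ref{cpt}.

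If a self-contained argument is preferred, one shows directly that $\mathcal{L}(\mathbb{Z}_p)$ has enough points. Given any $a \neq 1$, compactness allows Zorn's lemma to produce a maximal proper element $m$ with $a \leq m$ (chains of proper elements have proper joins by compactness). Regularity then forces $m$ to be prime: if $b \wedge c \leq m$ with $b, c \not\leq m$, maximality gives $b \vee m = c \vee m = 1$, and the well-inside relation together with compactness contradicts $b \wedge c \leq m < 1$. The map $\alpha_m$ sending $x \mapsto 1$ iff $x \not\leq m$ is then a frame homomorphism into $\mathbf{2}$ at which $a$ evaluates to $0$, so the points separate $\mathcal{L}(\mathbb{Z}_p)$.

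A third, more concrete route consistent with the spirit of Proposition \ref{clsubloc} is to combine the closed-sublocale isomorphism $\mathcal{L}(\mathbb{Z}_p) \cong \;\uparrow \! B_p(0)'$ with the spatiality of $\mathcal{L}(\mathbb{Q}_p)$ shown in \cite{avila2020frame}: under the induced isomorphism $\mathcal{L}(\mathbb{Q}_p) \cong \Omega(\mathbb{Q}_p)$ the closed sublocale $\uparrow \! B_p(0)'$ corresponds to the closed subspace $\mathbb{Z}_p \subseteq \mathbb{Q}_p$, giving $\mathcal{L}(\mathbb{Z}_p) \cong \Omega(\mathbb{Z}_p)$ directly.

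The only genuine obstacle lies in the self-contained route: one needs the maximal-to-prime step, which is where compactness and regularity are jointly essential. Under the citation-based approach, there is essentially nothing to prove beyond noting that both hypotheses of the classical theorem are already in place.
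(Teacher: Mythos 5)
Your primary route (compact $+$ regular $\Rightarrow$ spatial, with both hypotheses already established in the preceding results) is exactly the argument the paper intends: the corollary is stated without proof immediately after compactness, and the authors later invoke precisely the fact that ``a compact regular frame is spatial.'' The proposal is correct and takes essentially the same approach as the paper; the alternative routes you sketch are unnecessary but not wrong.
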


A second proof of compactness based on a choice-free version of K\"{o}nig's lemma is presented. Recall that a {\it tree} is a partially ordered set of $(T,\le_T)$ with a unique least element called the {\it root} in which the predecessors of every element are well-ordered by $\le_T$. Moreover, a tree is $n$-ary if each element has at most $n$ immediate successors. For $x,y \in T$, $x$ is a {\it descendant} of $y$ iff $x <_T y$.

\begin{proof}
Let $\mathcal{C} \subseteq \mathcal{L}(\mathbb{Z}_p)$ where $\bigvee \mathcal{C} = 1$, if $1 \in \mathcal{C}$ there is nothing to prove; otherwise, consider the $p$-ary tree $(T,\le_T)$ where $x \in T$ if and only if $x = 1$ or $x = B_{p^{-n}(a)}$ for some $0 \le a < p^n$ and $B_{p^{-n}(a)} \not\le u$ for all $u \in \mathcal{C}$, and $\le_T$ is the reverse partial order inherited by the frame $\mathcal{L}(\mathbb{Z}_p)$. Now, for the sake of contradiction assume that $T$ is infinite. Note that $1 \in T$, call $b_1$ the immediate successor of $1$ with the smallest $0 \le i < p$ such that $B_{p^{-1}}(i)$ has infinitely many descendants in $T$. For $n \ge 1$, call $b_{n+1}$ the immediate successor of $b_n = B_{p^{-n}}(a)$ with the smallest $0 \le i < p$ where $B_{p^{-(n+1)}}(a + ip^n)$ has infinitely many descendants in $T$. This defines a sequence $(b_n)_{n \ge 1}$ in $\mathcal{L}(\mathbb{Z}_p)$. Now, set $h:\mathcal{L}(\mathbb{Z}_p) \to \mathbf{2}$ defined on generators by $h(B_r(a)) = 1$ if $B_r(a)=b_n$ for some $n \ge 1$; otherwise $h(B_r(a))=0$. Note that $h$ is an onto frame morphism, thus $h(\mathcal{C})$ is a cover of $\mathbf{2}$, but for each $u \in \mathcal{C}$, $h(u)=0$ since for each $n \ge 1$, $b_n \not\le u$, contradiction. Then $T$ is finite, so there is a smallest positive $n$ such that $B_{p^{-n}}(a) \not\in T$ for all $0 \le a < p^n$. Now for each $0 \le i < p^k$ pick $c_i$ in $\mathcal{C}$ with $B_{p^{-n}}(i) \le c_i$, since $1=\bigvee_{i=0}^{p^n-1}B_{p^{-n}}(i) \le \bigvee_{i=0}^{p^n-1} c_i$ we have that $\{c_i\}_{i=0}^{p^n-1} \subseteq \mathcal{C}$
\end{proof}

We now proceed to a categorical characterization of $\mathcal{L}(\mathbb{Z}_p)$ based on compactness, denseness, and metrizability. Note that, by (R$_5$), there are $x_1,\dots, x_p\in \Z$ such that $$\displaystyle{B_1(0)=B_{p^{-1}}(x_1)\vee \cdots \vee B_{p^{-1}}(x_p)}$$ with $B_{p^{-1}(x_i)}\wedge B_{p^{-1}}(x_j)=0$; and for each $x_i$, there are $x_{i1},\dots,x_{ip}\in \Z$ such that $\displaystyle{B_{p^{-1}}(x_i)=B_{p^{-2}}(x_{i1})\vee \cdots \vee B_{p^{-2}}(x_{ip})}$. Continuing this way, we can form the following tree\\

\Tree[.$B_1(0)$ [.$\displaystyle{B_{p^{-1}}(x_{\scriptscriptstyle 1})}$ [.$\displaystyle{B_{p^{-2}}(x_{\scriptscriptstyle 11})}$ $\vdots$ ] 
            $\cdots$   [.$\displaystyle{B_{p^{-2}}(x_{\scriptscriptstyle 1p})}$ $\vdots$ ] ] 
       $\cdots$ [.$\displaystyle{B_{p^{-1}}(x_{\scriptscriptstyle p})}$ [.$\displaystyle{B_{p^{-2}}(x_{\scriptscriptstyle p1})}$ $\vdots$ ] $\cdots$
                [.$\displaystyle{B_{p^{-2}}(x_{\scriptscriptstyle pp})}$ $\vdots$ ]]]\vspace{.2in}\\
and, by (R$_1$)$-$ (R$_5$), we see that each $B_r(a)\in \mathcal{L}(\mathbb{Z}_{p})$ is one of the elements of this tree. It is immediate that, for each $B_r(a)$, the interval $[0,B_r(a)]$ is isomorphic to $\mathcal{L}(\mathbb{Z}_{p})$.
Finally, recall that for a frame $L$, the operator $cbd_L:L\to L$, defined by \[\displaystyle{cbd_L(a)=\bigwedge\big\{x\in L\mid a\leq x \text{ and } (x\to a)=a\big\},}\] is called the Cantor-Bendixson derivative on $L$, and that $[a,cbd_L(a)]$ is the largest Boolean interval above $a$ (see, e.g. \cite{MR3363833}). 

\begin{prop}\label{c33}
The frame $\mathcal{L}(\mathbb{Z}_{p})$ satisfies \[cbd_{\mathcal{L}(\mathbb{Z}_{p})}(0)=0\] 

\end{prop}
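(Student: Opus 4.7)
The strategy is to unpack the definition of $cbd_{\mathcal{L}(\mathbb{Z}_p)}$ and then produce enough elements with trivial pseudocomplement to force its meet down to $0$. Since $x \to 0 = \neg x$ in any Heyting algebra,
\[
cbd_{\mathcal{L}(\mathbb{Z}_p)}(0) = \bigwedge\bigl\{x \in \mathcal{L}(\mathbb{Z}_p) \mid \neg x = 0\bigr\}.
\]
By $0$-dimensionality of $\mathcal{L}(\mathbb{Z}_p)$ every nonzero element lies above some nonzero generator, so it suffices to exhibit, for each generator $B_r(a)$, an element $u$ with $\neg u = 0$ and $B_r(a) \not\leq u$: granted this, if $cbd_{\mathcal{L}(\mathbb{Z}_p)}(0)$ were nonzero it would contain such a generator $B_r(a)$ and simultaneously be below $u$, contradicting $B_r(a) \not\leq u$.

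Given $B_r(a)$ with $r = p^{-m+1}$, the candidate is
\[
u := \bigvee_{k \geq m} B_{p^{-k+1}}(a)',
\]
the join of the complements of the descending chain of balls concentric with $B_r(a)$. Because each generator is complemented, $\neg u = \bigwedge_{k \geq m} B_{p^{-k+1}}(a)$. Any generator $B_s(b)$ below every $B_{p^{-k+1}}(a)$ would, by (R$_1$), satisfy $s \leq p^{-k+1}$ for all $k \geq m$, which is impossible since $s \in |\mathbb{Z}|$ is a fixed positive value. As every element of $\mathcal{L}(\mathbb{Z}_p)$ is a join of generators (finite meets of generators reduce to generators or $0$ by (R$_2$) and (Q$_1$)), this meet is $0$, so $\neg u = 0$.

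For the non-containment $B_r(a) \not\leq u$ I would construct the ``evaluation at $a$'' frame morphism $h_a \colon \mathcal{L}(\mathbb{Z}_p) \to \mathbf{2}$ defined on generators by $h_a(B_t(c)) = 1$ iff $|a - c|_p < t$, whose compatibility with (Q$_1$)--(Q$_3$) is a routine consequence of the ultrametric inequality (for (Q$_1$) and one direction of (Q$_3$)), together with the observation that $a$ itself supplies an integer witness in (Q$_2$) and in the other direction of (Q$_3$). Since $h_a(B_r(a)) = 1$ while $h_a(u) = \bigvee_k h_a(B_{p^{-k+1}}(a)') = \bigvee_k 0 = 0$, and $h_a$ preserves order, we conclude $B_r(a) \not\leq u$. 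Equivalently, once the identification $\pt(\mathcal{L}(\mathbb{Z}_p)) \cong \mathbb{Z}_p$ is available, $u$ corresponds to the dense open set $\mathbb{Z}_p \setminus \{a\}$, which does not contain the point $a \in S_r\langle a \rangle$. The main technical checkpoint is the verification that $h_a$ respects the three defining relations; the rest is bookkeeping.
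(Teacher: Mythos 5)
Your proof is correct in substance but takes a genuinely different route from the paper's. The paper argues by contradiction from two facts recorded just before the proposition: $[0,cbd_{\mathcal{L}(\mathbb{Z}_p)}(0)]$ is the largest Boolean interval above $0$, and the self-similarity $[0,B_r(a)]\cong\mathcal{L}(\mathbb{Z}_p)$ coming from the tree of balls; if $cbd_{\mathcal{L}(\mathbb{Z}_p)}(0)\neq 0$ it dominates some generator $B_r(a)$, so $[0,B_r(a)]$ would be Boolean, contradicting the (implicit) fact that $\mathcal{L}(\mathbb{Z}_p)$ is not Boolean. You instead compute the meet of the dense elements directly, exhibiting for each generator $B_r(a)$ the dense element $u=\bigvee_{k\geq m}B_{p^{-k+1}}(a)'$ (the complement of the ``point'' $a$) together with $B_r(a)\not\leq u$. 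Your argument is more self-contained and more informative: it makes explicit exactly the non-Booleanness that the paper's contradiction silently relies on, at the cost of verifying the character $h_a$ against (Q$_1$)--(Q$_3$) (or of invoking the spatiality $\pt(\mathcal{L}(\mathbb{Z}_p))\cong\mathbb{Z}_p$, which the paper does assert). Two small points to tighten: the step ``$B_s(b)\leq B_{p^{-k+1}}(a)$ forces $s\leq p^{-k+1}$'' uses the \emph{converse} of (R$_1$), not (R$_1$) itself, and so requires the faithfulness of the presentation --- obtainable from the same evaluation morphisms $h_c$ or from spatiality; and $0$-dimensionality is not what provides a nonzero generator below a nonzero element --- that is simply the fact that every element is a join of generators. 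Neither issue affects the correctness of the strategy.
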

\begin{proof}
Suppose $cbd_{\mathcal{L}(\mathbb{Z}_{p})}(0)\neq 0$. Then, there is $B_r(a)\in \mathcal{L}(\mathbb{Z}_{p})$ be such that $B_r(a)\leq cbd(0)$. It follows that $[0,B_r(a)]$ is boolean since $[0,cbd_{\mathcal{L}(\mathbb{Z}_{p})}(0)]$ is boolean, but this contradicts the fact that $[0,B_r(a)]\cong \mathcal{L}(\mathbb{Z}_{p})$. 
\end{proof}

Now we will see the metric uniformity of $\mathcal{L}(\mathbb{Z}_p)$.
For each natural number $n$, set 
\begin{center}
$U_n=\{B_r(a)\in \mathcal{L}(\mathbb{Z}_p)\mid a\in \mathbb{Z}, \, r=p^{-n-i}, \, i=0,1,2,... \}$.
\end{center}
Then each $U_n$ is a cover of $\mathcal{L}(\mathbb{Z}_p)$. Note that by (R$_5$), any $B_s(b)$ can be written as a finite join of elements of $U_n$.
\begin{prop} $\{U_n \mid n\in \mathbb{N} \}$ is a basis for a uniformity $\,\mathcal{U}$ on $\mathcal{L}(\mathbb{Z}_p)$.
\end{prop}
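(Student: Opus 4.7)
The plan is to verify the three basis axioms for a uniformity on a frame: that $\{U_n\}_{n \in \mathbb{N}}$ is directed under refinement, that every $U_n$ admits a star refinement within the family, and that the family is admissible, i.e., $a = \bigvee\{x \in \mathcal{L}(\mathbb{Z}_p) \mid U_n x \leq a \text{ for some } n\}$ for every $a$.

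First I would confirm that each $U_n$ is genuinely a cover: combining (Q$_2$) with iterated applications of (R$_5$), every generator $B_r(a)$ with $r \geq p^{-n}$ is a finite join of balls of radius $p^{-n}$, which all lie in $U_n$, so $\bigvee U_n = 1$. Directedness is then immediate from the inclusions $U_{n+1} \subseteq U_n$: for any $m, n$ the cover $U_{\max(m,n)}$ refines both $U_m$ and $U_n$ by inclusion.

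The central point is the star refinement, where I claim $U_{n+1}$ star-refines $U_n$. Fix $v = B_s(b) \in U_{n+1}$, so $s \leq p^{-n-1}$, and consider $U_{n+1} v = \bigvee\{w \in U_{n+1} \mid w \wedge v \neq 0\}$. The ultrametric character of the generators, encoded in (Q$_1$) together with (R$_1$)--(R$_4$), forces any two generators to be either disjoint or comparable. Consequently every $w \in U_{n+1}$ meeting $v$ satisfies $w \leq v$ or $w \geq v$; in the latter case $w = B_t(c)$ with $s \leq t \leq p^{-n-1}$ and $|b-c|_p < t$, and (R$_1$) gives $w \leq B_{p^{-n-1}}(b)$. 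Hence $U_{n+1} v = B_{p^{-n-1}}(b)$, which itself lies in $U_{n+1} \subseteq U_n$, so $u = B_{p^{-n-1}}(b) \in U_n$ witnesses the star refinement.

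For admissibility, take a generator $B_r(a)$. By (Q$_3$), $B_r(a) = \bigvee\{B_s(b) \mid |a-b|_p < r,\, s < r\}$. For each such $B_s(b)$, choose $n$ with $p^{-n} = s$; then $B_s(b)$ has maximal radius in $U_n$, and the same ultrametric analysis shows every $w \in U_n$ meeting $B_s(b)$ is contained in $B_s(b)$, so $U_n B_s(b) = B_s(b) \leq B_r(a)$. Thus every term of the (Q$_3$) decomposition is uniformly below $B_r(a)$, giving admissibility for generators; it then extends to arbitrary elements because they are joins of generators and all operations in sight preserve joins. Modulo these routine verifications, the only nontrivial step is the star-refinement computation, and here the ultrametric property (every pair of generators is nested or disjoint) keeps it very short, in sharp contrast with the more delicate analogue for $\mathcal{L}(\mathbb{R})$.
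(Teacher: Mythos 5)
Your proof is correct and follows essentially the same route as the paper: the inclusions $U_{n+1}\subseteq U_n$ give the filter-basis property, the ultrametric nesting of generators gives the star refinement $U_{n+1}^*\leq U_n$, and admissibility is deduced from (Q$_3$). If anything, your computation $U_{n+1}B_s(b)=B_{p^{-n-1}}(b)$ is stated more precisely than the paper's intermediate claim $U_{n+1}B_r(a)\leq B_r(a)$ (which literally holds only for balls of maximal radius in $U_{n+1}$), but both yield the same conclusion.
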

\begin{proof}
First, note that $U_{n+1}$ is contained in $U_n$ and thus $U_{n+1}\leq U_n$, that is, $\{U_n \, | \, n\in \mathbb{N} \}$ is a filter basis of covers.\\
 Now, for any $B_r(a)\in U_{n+1}$,
\begin{center}
$U_{n+1}B_r(a)=\bigvee \{B_s(b)\mid s=p^{-n-i} \text{ with } i\in \mathbb{N}, \, B_s(b)\wedge B_r(a)\neq 0\}\leq B_r(a)$,
\end{center}
thus $U_{n+1}^*\leq U_n$ (or $U_{n+1}U_{n+1}\leq U_n$).\\
Finally, if $B_s(b)\leq B_r(a)$, with $r=p^{-n}$. Then
\begin{center}
$U_{n+1}B_s(b)=\bigvee \{B_t(c)\mid t=p^{-n-i} \text{ with } i\in \mathbb{N}, \, B_t(c)\wedge B_s(b)\neq 0\}\leq B_s(b)$,
\end{center}
therefore $B_s(b) \lhd_{\mathcal{U}} B_r(a)$. Then the admissibility condition follows from (Q$_3$) and its remark.
\end{proof}
\begin{cor} $\mathcal{L}(\mathbb{Z}_p)$ is metrizable.
\end{cor}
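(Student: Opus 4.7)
The plan is to invoke the standard characterization of metrizable frames: a frame $L$ is metrizable if and only if it admits a countably generated (diagonal) uniformity, equivalently, its uniformity has a countable basis of covers satisfying star-refinement (see, for instance, \cite{PicadoPultr}, Chapter VIII, on uniform and metric frames). Since the previous proposition has just exhibited such a countable basis, namely $\{U_n \mid n\in \mathbb{N}\}$, for the uniformity $\mathcal{U}$ on $\mathcal{L}(\mathbb{Z}_p)$, the corollary will follow almost at once.

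Concretely, I would proceed as follows. First, recall that in pointfree topology a \emph{metric diameter} on $L$ assigns to each $a\in L$ a non-negative real $d(a)$ in such a way that the covers $C_\epsilon=\{a\in L\mid d(a)<\epsilon\}$ form an admissible system; equivalently, the uniformity admits a countable basis $(V_n)$ with $V_{n+1}^{*}\leq V_n$ (star-refinement) and $\bigvee\{V_n a\mid n\in\N\} = a$ (admissibility) for every $a\in L$. The basis $\{U_n\}$ constructed above satisfies exactly these conditions: by the previous proof we have $U_{n+1}^{*}\leq U_n$, and admissibility is the statement $\bigvee\{x\mid x\lhd_{\mathcal U} a\}=a$, which was already verified via the relation $(\mathrm{Q}_3)$.

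Hence one defines, for each generator $B_r(a)$ with $r=p^{-n+1}$, the diameter $d(B_r(a))=p^{-n+1}$ (extended to arbitrary elements by taking infima of diameters of covers by generators). By (R$_5$), arbitrary generators split into finite joins of generators of smaller radius, so the induced system of covers agrees up to cofinality with $\{U_n\}_{n\in\N}$. Thus $d$ is a metric diameter compatible with $\mathcal{U}$, witnessing metrizability. Since the hard work of establishing star-refinement and admissibility was carried out in the preceding proposition, the only remaining step is this direct appeal to the metrization theorem for uniform frames.

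There is no serious obstacle: the corollary is essentially a bookkeeping consequence of the previous proposition combined with the pointfree metrization theorem. The only point that requires a line of verification is that the countable filter basis $\{U_n\}$ is genuinely admissible on all of $\mathcal{L}(\mathbb{Z}_p)$ and not merely on the generators, but this is immediate from the fact that every element of $\mathcal{L}(\mathbb{Z}_p)$ is a join of generators together with the identity $\bigvee\{U_n a\mid n\in\N\}=a$ established in the proof above.
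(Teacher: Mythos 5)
Your proposal is correct and matches the paper's (implicit) argument exactly: the corollary is stated without proof precisely because it follows immediately from the preceding proposition exhibiting the countable uniformity basis $\{U_n\}_{n\in\N}$ together with the standard metrization theorem for uniform frames. The extra detail you give about the metric diameter is harmless elaboration of the same route.
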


As in the topological case the space  $\mathbb{Q}_p$ is the completion of $\mathbb{Q}$ with respect to the $p$-adic absolute value $|\cdot|_p$, the $p$-adic integers $\mathbb{Z}_p$ is a subring of $\mathbb{Q}_p$. Additionally, the ring $\mathbb{Z}_p$ is complete and $\mathbb{Z}$ is dense in $\mathbb{Z}_p$; that is, if $x\in \mathbb{Z}_p$, then there exists a sequence $\{x_n\}$ of integers converging to $x$.

These facts have its point-free context:

Recall that a uniform frame $(L,\mathcal{U})$ is complete if each ue-surjection (uniform embedding surjection) $h:(M,\mathcal{V}) \to (L,\mathcal{U})$ is an isomorphism.

\begin{prop} \label{completion}
The uniform frame $(\mathcal{L}(\mathbb{Z}_p),\mathcal{U})$ is complete.
\end{prop}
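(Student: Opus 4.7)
The plan is to leverage that $\mathcal{L}(\mathbb{Z}_p)$ is compact (Corollary \ref{cpt}) and completely regular, so the classical pointfree principle that every compact regular uniform frame is complete applies (see, e.g., \cite{PicadoPultr}, Chapter VIII). Since the previous proposition already shows that $\mathcal{U}$ is an admissible uniformity on the compact regular frame $\mathcal{L}(\mathbb{Z}_p)$, this general theorem delivers completeness at once.

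For a more self-contained verification, suppose $h\colon(M,\mathcal{V})\to(\mathcal{L}(\mathbb{Z}_p),\mathcal{U})$ is a ue-surjection, with right adjoint $h_*\colon\mathcal{L}(\mathbb{Z}_p)\to M$. Since $h$ is onto, $hh_* = \mathrm{id}$, so the task reduces to showing $h_*h=\mathrm{id}_M$, i.e., injectivity of $h$. First, I use that $h$ is a uniform embedding to conclude that the pullback covers $h^{-1}[U_n]$ are cofinal in $\mathcal{V}$. Next, for each $x\in M$, admissibility of $\mathcal{V}$ gives $x=\bigvee\{y\in M\mid y\lhd_{\mathcal{V}} x\}$, so it suffices to show $y\leq h_*h(x)$ for every such $y$. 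Given such a $y$, some cover $U_n$ witnesses $h(y)\lhd_{\mathcal{U}} h(x)$, and compactness of $\mathcal{L}(\mathbb{Z}_p)$ lets me replace a potentially infinite expansion of $h(x)$ by a finite join of generators $B_r(a)$ on which $h_*$ acts as the canonical section. Pushing this finite approximation back through the adjunction $h\dashv h_*$ yields $y\leq h_*h(x)$, and taking joins gives $x\leq h_*h(x)$; the reverse inequality is the unit of the adjunction.

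The main obstacle is making the bridge between the uniform embedding condition on $h$ and the compactness of $\mathcal{L}(\mathbb{Z}_p)$ precise: being a uniform embedding transports uniform information about $M$ faithfully along $h$, while compactness collapses every cover of $\mathcal{L}(\mathbb{Z}_p)$ to a finite subcover of basic generators, and the interaction of the two is exactly what rules out $M$ being a proper completion of $\mathcal{L}(\mathbb{Z}_p)$. Once the finite-approximation step is in place, the remaining work is routine lattice manipulation using the adjunction $h\dashv h_*$ together with the properties (R$_1$)--(R$_5$) already collected above.
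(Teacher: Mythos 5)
Your first route---invoking the standard fact that every compact (regular) uniform frame is complete, given that compactness and admissibility of $\mathcal{U}$ are already established---is a legitimate and genuinely different argument from the one in the paper, and it is shorter. The paper instead works directly with the presentation: it takes a \emph{dense} ue-surjection $h\colon(M,\mathcal{V})\to(\mathcal{L}(\mathbb{Z}_p),\mathcal{U})$ and shows that the right adjoint $h_*$ is itself a frame homomorphism by checking that it turns the defining relations (Q$_1$)--(Q$_3$) into identities in $M$, using that $h_*[U_n]$ is a cover of $M$. The paper's approach buys an explicit inverse and stays entirely inside the generators-and-relations framework; yours outsources the work to a general theorem. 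If you go the general-theorem route, state it precisely and you are done.

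Your ``self-contained verification,'' however, has two genuine defects. First, the inequality you set out to prove, $x\leq h_*h(x)$, \emph{is} the unit of the adjunction $h\dashv h_*$ and holds for free; the substantive direction needed for injectivity of $h$ is $h_*h(x)\leq x$, which you dismiss as ``the unit of the adjunction'' and never address. As written, the argument establishes only a triviality. Second, you drop the density hypothesis. Without density the claim is false: any nontrivial closed quotient $\mathcal{L}(\mathbb{Z}_2)\to\,\uparrow\! x$, equipped with the image uniformity, is a ue-surjection that is not an isomorphism, so density must enter somewhere (in the paper it is used both to reduce the problem to finding a right inverse and to guarantee that $h_*[U_{m+1}]$ is a cover of $M$). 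Finally, the ``finite approximation'' step is unjustified: compactness of the frame gives $1\ll 1$, not $h(x)\ll h(x)$, so a general element $h(x)$ cannot be replaced by a finite join of generators. Keep the first paragraph, discard the second.
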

\begin{proof}
Let $h\colon(M,\mathcal{V}) \to (\mathcal{L}(\mathbb{Z}_p),\mathcal{U})$ be any dense ue-surjection. Since $h$ is dense, we just need to prove that it has a right inverse. Let $h_*$ be the right (Galois) adjoint of $h$. Note that $hh_*=id$ since $h$ is onto. Therefore, it is enough to show that $h_*$ is a frame homomorphism, that is, $h_*$ (defined on generators) turns the conditions (R$_1$)$-$(Q$_3$) into identities on $M$.\\
(R$_1$) and (Q$_1$): These are obvious since $h_*$ preserves arbitrary meets.\\
(Q$_2$): Since $h_*[U_n]$ is a cover of $M$ for each $n$, then 
\begin{center}
$1=\bigvee\big\{h_*\big(B_r(a)\big)\mid a\in \mathbb{Z}, r\in|\mathbb{Z}|\big\}$.
\end{center}
(Q$_3$): Given $b\in\mathbb{Z}$ with $|a-b|_p<r$ and $s=p^{-m}<r$, then $B_{r}(a)=B_{r}(b)$. Take any $B_t(c)\in U_{m+1}$, then
\begin{equation*}
B_{r}(a)\wedge B_t(c)=B_{r}(b)\wedge B_t(c)\leq B_s(b).
\end{equation*}
Thus,  \[h_*\big(B_{r}(a)\big)\wedge h_*\big(B_t(c)\big)\leq h_*\big(B_s(b)\big)\leq\bigvee\big\{h_*\big(B_s(b)\big)\mid |a-b|_p<r, \,s<r\big\}.\] \\Since $h$ is a dense ue-surjection, $h_*[U_{m+1}]\in \mathcal{V}$ is a cover. Therefore, 
\begin{center}
$h_*\big(B_{r}(a) \big)\leq\bigvee\big\{h_*\big(B_s(b)\big)\mid |a-b|_p<r, \,s<r\big\}$.
\end{center}
Since the reverse inequality clearly holds, this proves that
\begin{center}
$h_*\big(B_{r}(a) \big)=\bigvee\big\{h_*\big(B_s(b)\big)\mid |a-b|_p<r, \,s<r\big\}$. 
\end{center}

\end{proof}
Note that the map $B_r(a)\mapsto B_r\langle a \rangle$ induces a canonical frame homomorphism $h\colon\mathcal{L}(\mathbb{Z}_p)\to \Omega(\mathbb{Z})$. Since $h$ maps each cover $U_n$ to the analogous cover \[\displaystyle{V_n=\{B_r\langle a \rangle \in \Omega(\mathbb{Z})\mid a\in \mathbb{Z}, \, r=p^{-n-i}, \, i=0,1,2,... \}}\] of $\Omega(\mathbb{Z})$, it follows that the uniformity on $\Omega(\mathbb{Q})$ induced by the metric uniformity $\, \mathcal{U}$ on $\mathcal{L}(\mathbb{Z}_p)$ via $h$ is the metric uniformity $\mathcal{Z}$ on the integers with respect to the $p$-adic absolute value, and $h\colon(\mathcal{L}(\mathbb{Z}_p),\mathcal{U})\to(\Omega(\mathbb{Z}),\mathcal{Z})$ is a uniform homomorphism.
\begin{cor}
$h\colon(\mathcal{L}(\mathbb{Z}_p),\mathcal{U})\to(\Omega(\mathbb{Z}),\mathcal{Z})$ is a completion of $(\Omega(\mathbb{Z}),\mathcal{Z})$.
\end{cor}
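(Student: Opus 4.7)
The strategy is to combine Proposition \ref{completion}, which gives completeness of $(\mathcal{L}(\mathbb{Z}_p),\mathcal{U})$, with a direct verification that $h$ is a dense ue-surjection onto $(\Omega(\mathbb{Z}),\mathcal{Z})$. In the uniform frame setting, such a map from a complete uniform frame is (up to isomorphism) the completion, since the completion is characterized as a dense uniform surjection from a complete source. The observation preceding the corollary already identifies $\mathcal{Z}$ as the image uniformity along $h$, so most of the work is already done; what remains is surjectivity, density, and a brief check of the embedding aspect.

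First, I would verify that $h$ is onto. Since the generators $B_r\langle a\rangle$ of $\Omega(\mathbb{Z})$ are precisely the images $h(B_r(a))$ of the generators of $\mathcal{L}(\mathbb{Z}_p)$, and $h$ preserves arbitrary joins, every open set of $\mathbb{Z}$ is in the image of $h$. Next, I would verify density. If $x\in\mathcal{L}(\mathbb{Z}_p)$ is nonzero, then by spatiality (established in the corollary following \ref{cpt}) there is a nonzero generator $B_r(a)\leq x$. The corresponding open ball $B_r\langle a\rangle$ contains the integer $a$, hence is nonempty in $\Omega(\mathbb{Z})$; therefore $h(x)\geq h(B_r(a))=B_r\langle a\rangle\neq 0$, so $h$ is dense.

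For the embedding aspect, I would rely on the observation preceding the corollary: each basic cover $U_n$ of $\mathcal{U}$ satisfies $h[U_n]=V_n$, and $\{V_n\mid n\in\mathbb{N}\}$ generates $\mathcal{Z}$. Combined with surjectivity this identifies $h$ as a ue-surjection in the sense used in Proposition \ref{completion}: the uniformity on the target is exactly the image of the uniformity on the source, which is the relevant condition since $h$ is onto on generators so there is no loss of structure. Finally, invoking Proposition \ref{completion} we conclude that $(\mathcal{L}(\mathbb{Z}_p),\mathcal{U})$ is a complete uniform frame mapping densely and uniformly onto $(\Omega(\mathbb{Z}),\mathcal{Z})$; this is by definition a completion.

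The main delicate point is the embedding aspect: one must be careful that the surjection $h$ really is a \emph{uniform embedding} in the sense needed for the completion property, not merely a uniform map. Here the crucial feature is that $h$ is onto on generators with $h[U_n]=V_n$, so covers in the source can be recovered from covers in the target, and density provides the uniqueness of the completion by the standard universal property.
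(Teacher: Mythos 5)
Your proposal is correct and follows essentially the same route as the paper, which leaves the corollary's proof implicit: the preceding paragraph already establishes that $h$ is a uniform homomorphism with $h[U_n]=V_n$, and the corollary is then immediate from Proposition \ref{completion} once one notes, as you do, that $h$ is a dense surjection (generators map onto the generating balls of $\Omega(\mathbb{Z})$, and any nonzero element dominates a nonzero generator whose image is a nonempty ball). Your filling-in of the surjectivity, density, and uniform-embedding checks is exactly the intended argument.
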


\begin{cor}\label{c3}
The frame $\mathcal{L}(\mathbb{Z}_{p})$ is completely uniform

\end{cor}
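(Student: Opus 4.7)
The plan is to simply assemble the pieces already established earlier in the section, since at this point nothing new remains to be verified. First I would recall that the uniformity $\mathcal{U}$ on $\mathcal{L}(\mathbb{Z}_p)$ was explicitly given by the countable filter basis $\{U_n \mid n \in \mathbb{N}\}$, where $U_n = \{B_r(a) \mid a \in \mathbb{Z},\, r = p^{-n-i},\, i = 0,1,2,\dots\}$. Admissibility was obtained from the relation (Q$_3$) combined with the star-refinement check carried out in the proof that $\{U_n\}$ is a basis for a uniformity. Thus $(\mathcal{L}(\mathbb{Z}_p), \mathcal{U})$ is a metric uniform frame, which already yielded the metrizability corollary.

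Next I would invoke Proposition \ref{completion}, which shows that any dense ue-surjection $h\colon (M,\mathcal{V}) \to (\mathcal{L}(\mathbb{Z}_p),\mathcal{U})$ must be a frame isomorphism, by producing the right Galois adjoint $h_*$ and verifying that it turns the defining relations (Q$_1$)--(Q$_3$) into identities in $M$. This is exactly the definition of $(\mathcal{L}(\mathbb{Z}_p),\mathcal{U})$ being a complete uniform frame.

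Putting the two facts together, $\mathcal{L}(\mathbb{Z}_p)$ carries a complete metric (countably generated) uniformity, which is the content of being \emph{completely uniform}. No genuine obstacle arises: the corollary is essentially a bookkeeping statement combining Proposition \ref{completion} with the countable-basis presentation of $\mathcal{U}$, so the proof reduces to citing these two results and observing that they together give precisely the required property.
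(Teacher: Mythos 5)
Your proposal is correct and matches the paper's route exactly: the corollary is an immediate consequence of Proposition \ref{completion} (completeness of $(\mathcal{L}(\mathbb{Z}_p),\mathcal{U})$) together with the earlier construction of the countably generated metric uniformity $\{U_n\}$, and the paper offers no further argument beyond this. Nothing is missing.
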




It can be shown that $\mathcal{L}(\Z_p)$ is a spatial frame whose space of points is homeomorphic to $\Z_p$ (see, e.g. \cite{avila2020frame}). Also, any uniform frame is regular, and a compact regular frame is spatial, thus we can conclude the following:

\begin{thm}\label{c2}

Let $L$ be a frame, then \[L\cong\mathcal{L}(\mathbb{Z}_{p})\Leftrightarrow L\text{ is 0-dimensional, compact, and metrizable, with } cdb_L(0)=0.\]
\end{thm}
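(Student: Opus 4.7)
The plan is to prove both implications. The forward direction ($\Rightarrow$) is a summary of earlier results: if $L\cong\mathcal{L}(\mathbb{Z}_p)$, then $L$ is $0$-dimensional (corollary following the proof that every generator $B_r(a)$ is complemented), compact (Corollary~\ref{cpt}), metrizable (corollary following the proposition establishing $\{U_n\}$ as a basis for the metric uniformity $\mathcal{U}$), and satisfies $cbd_L(0)=0$ (Proposition~\ref{c33}). All four properties are invariant under frame isomorphism.

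For the converse ($\Leftarrow$), I would pass through the spectrum functor. A $0$-dimensional frame is automatically regular (each complemented $b\leq a$ satisfies $b\prec a$, witnessed by $\neg b$), and a compact regular frame is spatial, so $L\cong\Omega(\pt L)$. Setting $X:=\pt L$, the space $X$ is compact Hausdorff (from $L$ compact and regular), totally disconnected (the isomorphism $L\cong\Omega(X)$ sends the clopen generators of $L$ to a base of clopen subsets of $X$), and metrizable (a countable admissible uniform basis on $L$ transports along the isomorphism to a countable admissible uniform basis on $X$, and a compact Hausdorff space carrying such a basis is metrizable). To apply Brouwer's characterization of the Cantor set, I only need to verify that $X$ has no isolated points.

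The hypothesis $cbd_L(0)=0$ supplies exactly this. An isolated point $\alpha\in X$ would yield a singleton open $\{\alpha\}=U_a$ for some $a\in L$; since $b\leq a$ translates to $U_b\subseteq U_a=\{\alpha\}$, such an $a$ must be an atom of $L$. But then $[0,a]=\{0,a\}$ is a two-element Boolean interval above $0$, which by the definition of the Cantor--Bendixson derivative forces $a\leq cbd_L(0)=0$, a contradiction. Hence $X$ is a compact, totally disconnected, metrizable space with no isolated points, and by Brouwer's theorem $X$ is homeomorphic to the Cantor set, hence to $\mathbb{Z}_p$. Since $\mathcal{L}(\mathbb{Z}_p)$ is itself spatial with space of points $\mathbb{Z}_p$, we conclude $L\cong\Omega(X)\cong\Omega(\mathbb{Z}_p)\cong\mathcal{L}(\mathbb{Z}_p)$. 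The main obstacle I anticipate is justifying cleanly the descent of metrizability from the frame to $X$: this requires invoking the correspondence between frame uniformities and uniformities on the spectrum, together with the classical uniform metrization theorem, rather than any genuinely new argument.
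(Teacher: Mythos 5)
Your proposal is correct and follows exactly the route the paper intends (the paper gives no explicit proof of Theorem \ref{c2}, only the preceding remark that a compact metrizable frame is regular hence spatial, together with the abstract's appeal to Brouwer's characterization): forward direction by collecting the earlier results, converse by passing to $\pt L$, checking it is compact, zero-dimensional, metrizable and, via $cbd_L(0)=0$, has no isolated points, then invoking Brouwer's theorem and the spatiality of $\mathcal{L}(\mathbb{Z}_p)$. Your filling-in of the isolated-point/atom argument is consistent with the paper's own use of the fact that $[0,cbd_L(0)]$ is the largest Boolean interval above $0$.
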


\section{A point-free version of a theorem of Hausdorff-Alexandroff}\label{haus}

In this section we will produce a point-free counterpart of the following well known theorem due to Hausdorff and Alexandroff:\[\emph{ Every compact metric space is a continuous image of Cantor space }\]

As noted in \citep{Alain} the map $\phi:\mathbb{Z}_2 \to [0,1]$ where
$$\phi\left( \sum_{i \ge 0}b_i2^i\right) = \sum_{i \ge 0} \frac{b_i}{2^{i+1}}.$$
is continuous and onto. We introduce some topological and set theoretic properties of this map that motivates a frame-theoretic analogy.

\begin{prop}
Let $u$ be an integer coprime to $2$, $g$ a nonnegative integer where $0 < u < 2^g$, then $\displaystyle{\;\phi^{-1}\left(\frac{u}{2^g}\right)\;}$ has exactly two elements. 
\end{prop}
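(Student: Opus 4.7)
The plan is to produce two explicit preimages of $u/2^g$ under $\phi$ and then invoke the second dichotomy lemma from the preliminaries to rule out any others. Since $u$ is odd with $0<u<2^g$, it admits a unique binary expansion $u=\sum_{k=0}^{g-1}a_k\,2^k$ with $a_0=1$ and each $a_k\in\{0,1\}$. Reindexing gives the terminating real expansion $u/2^g=\sum_{j=1}^{g}a_{g-j}/2^j$, which is the bridge between the $2$-adic and the real-interval pictures.

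First I would define $x_1=\sum_{i\geq 0}b_i\,2^i\in\mathbb{Z}_2$ by $b_i=a_{g-1-i}$ for $0\leq i\leq g-1$ and $b_i=0$ for $i\geq g$; a direct computation then recovers $\phi(x_1)=u/2^g$, and the distinguishing coordinate $b_{g-1}=a_0=1$ is crucial for what follows. Next I would construct a second preimage $x_2=\sum_{i\geq 0} b'_i\,2^i$ by setting $b'_i=b_i$ for $i<g-1$, flipping $b'_{g-1}=0$, and completing with an infinite tail $b'_i=1$ for $i\geq g$. Using the geometric identity $\sum_{i\geq g}1/2^{i+1}=1/2^g$, a short calculation yields $\phi(x_2)=\phi(x_1)-1/2^g+1/2^g=u/2^g$, and clearly $x_1\neq x_2$ in $\mathbb{Z}_2$ since they differ at index $g-1$.

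Finally, I would establish uniqueness by applying the dichotomy lemma to any further preimage $x=\sum c_i\,2^i$. The sequences $(c_i)$ and $(b_i)$ represent the same real under the map $(e_i)\mapsto\sum e_i/2^{i+1}$, so the lemma forces either $(c_i)=(b_i)$, giving $x=x_1$, or there is a unique transition index at the last position where $(b_i)$ has a $1$ followed by a tail of zeros; that position is exactly $g-1$, and the resulting alternative sequence is precisely $(b'_i)$, giving $x=x_2$. The main subtlety will be keeping the two indexing conventions straight, since the $2$-adic bits grow leftward from index $0$ while the real's binary expansion grows rightward from index $1$; once that correspondence is made precise, the argument is essentially a direct transcription of the dichotomy lemma, and the hypothesis that $u$ is odd is what guarantees the terminating tail begins exactly at index $g-1$ (rather than earlier), so the two preimages cannot collapse into one.
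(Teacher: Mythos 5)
Your proposal is correct and follows essentially the same route as the paper: both construct the terminating preimage $x_+$ from the binary digits of $u$ and the second preimage by flipping the last $1$ (at index $g-1$) to $0$ and appending an infinite tail of $1$'s, then invoke the dichotomy lemma on binary expansions to rule out further preimages. (Your tail $\sum_{i\ge g}2^i$ is in fact the correct one; the paper's displayed $x_-$ starts its tail one index too late, though its subsequent computation uses the intended value $\sum_{i\ge g}1/2^{i+1}=1/2^g$.)
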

\begin{proof}
By the division algorithm there are unique integers $b_0,\ldots,b_{g-1} \in \{0,1\}$ such that 
$$u = \sum_{0 \le i < g} b_i2^i,$$ we note that $b_0 = 1$. Define for each $0\le i < g$, $a_i = b_{g-(i+1)}$, then 
$$\frac{u}{2^g} = \frac{\sum_{0 \le i < g} b_i2^i}{2^g} = \sum_{0 \le i < g}\frac{b_i}{2^{g-i}}=\sum_{0 \le i < g}\frac{b_{g-(i+1)}}{2^{i+1}}
=\sum_{0 \le i < g}\frac{{a_i}}{2^{i+1}}.$$ Define $$x_+ = \sum_{0 \le i < g}a_i2^i$$ and $$x_- = \sum_{0 \le i < g-1}a_i2^i + \sum_{g<i}2^i$$
then $\phi(x_+) = \frac{u}{2^g}$ and $$\phi(x_-) = \frac{u-1}{2^g} + \sum_{g < i}\frac{1}{2^{i+1}} = \frac{u-1}{2^g} + \frac{1}{2^g}\sum_{0<i}\frac{1}{2^i}=\frac{u-1}{2^g} + \frac{1}{2^g} = \frac{u}{2^g}.$$
Note that $x_-\neq x_+$ and they satisfy the conclusion on lemma 2, so they are the only two dyadic integers that whose image is $u/2^g$.   
\end{proof}

For the reminder of this paper, for every basic element $B$ of $\mathcal{L}(\mathbb{Z}_2)$, we denote the length of $B$ is the unique natural number $n$ with $B=B_{1/2^n}(a)$ for some $2$-adic integer $a$. For simplicity, we write $B(a,n)$ instead of $B_{1/2^n}(a)$.

\begin{lem}
For $\phi\colon\mathbb{Z}_2 \to [0,1]\subset \mathbb{R}$ as above, we have
$$\phi^{-1}(-\infty,u/2^g) = \bigcup\left\{B(a,{g+k}) \mid \phi(a) < \frac{u}{2^g} - \frac{1}{2^{g+k}} \text{ and } k \geq 0 \right\}$$
and 
$$\phi^{-1}(u/2^g,+\infty) = \bigcup\left\{B(a,{g+k}) \mid \phi(a) > \frac{u}{2^g} + \frac{1}{2^{g+k}} \text{ and } k \geq 0 \right\}$$
for every integer $u$ and nonnegative integer $g$.
\end{lem}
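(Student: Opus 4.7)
The key observation is that $\phi$ is almost a contraction in a very structured way: if $x\in B(a,n)=B_{2^{-n}}(a)$ then the $2$-adic expansions of $x$ and $a$ coincide in coordinates $0,1,\ldots,n$, so the first $n+1$ binary digits of $\phi(x)$ and $\phi(a)$ agree, giving
\[
\bigl|\phi(x)-\phi(a)\bigr|\;\leq\;\tfrac{1}{2^{n+1}}.
\]
Moreover, if $a$ is the integer obtained by truncating the expansion of $x$ after coordinate $n$ (keeping $b_0,\ldots,b_n$ and zeroing the rest), then the one-sided refinement $\phi(a)\leq\phi(x)\leq\phi(a)+2^{-n-1}$ holds. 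Both identities will flow from this uniform estimate.

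For the inclusion $\supseteq$ in the first identity, if $x\in B(a,g+k)$ with $\phi(a)<u/2^g-1/2^{g+k}$, then
\[
\phi(x)\;\leq\;\phi(a)+2^{-g-k-1}\;<\;\tfrac{u}{2^g}-2^{-g-k}+2^{-g-k-1}\;<\;\tfrac{u}{2^g},
\]
so $x\in\phi^{-1}(-\infty,u/2^g)$. For $\subseteq$, given $\phi(x)<u/2^g$, set $\delta=u/2^g-\phi(x)>0$, pick $k\geq 0$ with $2^{-g-k}<\delta$, and let $a$ be the integer truncation of $x$ after position $g+k$; then $x\in B(a,g+k)$ and $\phi(a)\leq \phi(x)<u/2^g-2^{-g-k}$, which is exactly the required condition.

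The second identity is proved symmetrically, but there is a small subtlety in its $\subseteq$ direction: the natural integer truncation $a$ of $x$ satisfies $\phi(a)\leq\phi(x)$, so one must instead invoke the \emph{lower} estimate $\phi(a)\geq\phi(x)-2^{-g-k-1}$. Choosing $k$ large enough that $\phi(x)-u/2^g$ exceeds $3\cdot 2^{-g-k-1}$ (possible since $\phi(x)>u/2^g$ strictly), this lower bound yields $\phi(a)>u/2^g+2^{-g-k}$, as required. The $\supseteq$ direction for $(u/2^g,+\infty)$ is then immediate from the companion upper bound $\phi(x)\geq \phi(a)-2^{-g-k-1}$. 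The only step demanding a bit of care is this last one, where $k$ must absorb both the slack $2^{-g-k}$ appearing in the target inequality and the additional slack $2^{-g-k-1}$ introduced by replacing $\phi(x)$ by $\phi(a)$; everything else is a direct manipulation of finite binary expansions.
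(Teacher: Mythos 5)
Your proof is correct, and it rests on the same basic engine as the paper's: truncating the $2$-adic expansion and using the uniform estimate $\lvert\phi(x)-\phi(a)\rvert\leq 2^{-(n+1)}$ for $x\in B(a,n)$ (the reverse inclusions are handled essentially identically in both arguments). Where you genuinely diverge is in the forward inclusion. The paper fixes the depth at $g$ first, compares the integer $\widetilde z$ built from the leading digits of $z$ with $u$, and splits into three cases; the delicate one ($\widetilde z=u-1$, i.e.\ the truncation lands exactly one step below $u/2^g$) forces an explicit construction of a deeper witness ball $B(z'',g+k)$ by locating the first zero digit of $z$ past position $g$ --- this is precisely where the non-uniqueness of binary expansions (tails of all $1$'s) must be excluded by hand. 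You instead choose the depth $k$ \emph{after} measuring the gap $\delta=u/2^g-\phi(x)>0$ and take the truncation at level $g+k$; since $\phi(a)\leq\phi(x)$ for the truncation, the required inequality $\phi(a)<u/2^g-2^{-(g+k)}$ drops out with no case analysis, and the all-ones subtlety is absorbed automatically by the strictness of $\delta>0$. Your route is shorter and more uniform; the paper's buys an explicit witness whose depth is read off from the digit pattern of $z$, which is closer in spirit to the generator-level manipulations used later when $\widetilde\phi$ is defined on the frame. One small point to watch: be consistent about whether membership in $B(a,n)=B_{2^{-n}}(a)$ pins down digits $0,\dots,n$ or $0,\dots,n-1$ (the strict ultrametric inequality gives the former, which is the convention you use); the argument survives either reading since all your inequalities have slack of at least one extra factor of $2$, but the constants $2^{-(g+k+1)}$ versus $2^{-(g+k)}$ shift accordingly.
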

\begin{proof}
Assume without loss of generality that $(u,2) = 1$. Now, suppose that $z \in \phi^{-1}(-,u/2^g)$, then $\phi(z) < u/2^g$. Write $z = \sum_{i \ge 0} z_i2^i$ and consider the partial sum of its first $g$ dyadic digits $z' = \sum_{i=0}^{g-1}z_i2^i$, note that $$\phi(z') = \sum_{i=0}^{g-1}\frac{z_i}{2^{i+1}} = \frac{1}{2^g}\sum_{i=0}^{g-1}z_i2^{g-(i+1)}$$
set $\widetilde z = \sum_{i=0}^{g-1}z_i2^{g-(i+1)}$, we now proceed by cases comparing the integers $\widetilde z$ and $u$
\begin{description}
	\item[Case 1: $\widetilde z \ge u$.] We get $u/2^g > \phi(z) \ge \phi(z') \ge \widetilde z/2^g$ which is impossible.
	\item[Case 2: $\widetilde z < u-1$.] Notice that $$\phi(z') = \frac{\widetilde z}{2^g} < \frac{u-1}{2^g}=\frac{u}{2^g} - \frac{1}{2^{g+0}}$$ thus $z \in B(z',g).$
	\item[Case 3: $\widetilde z = u-1$.] See that it is impossible that $z_i = 1$ for all $i \ge g$; otherwise, $$\phi(z) = \frac{\widetilde z}{2^g} + \sum_{i=g}^{\infty}\frac{1}{2^{i+1}} = \frac{u-1}{2^g} + \frac{1}{2^g} = \frac{u}{2^g}.$$
so let $m \ge g$ be smallest such that $z_i = 1$ for all $g \le i < m$, note that both $z_{g-1}$ and $z_{m}$ are $0$. Now set $z'' = z' + \sum_{i=g}^{m-1}2^i$, and set $k = g - (m-1)$, then
\begin{align*}
\phi(z'') &= \frac{\widetilde z}{2^g} + \frac{1}{2^g}\frac{2(2^{k-1}-1)}{2^k}\\
          &= \frac{u-1}{2^g} + \frac{2^k-2}{2^{g+k}}\\
					&= \frac{u}{2^g} -\frac{2^k}{2^{g+k}} + \frac{2^k - 2}{2^{g+k}}\\
					&= \frac{u}{2^g} - \frac{2}{2^{g+k}}\\
					&< \frac{u}{2^g} - \frac{1}{2^{g+k}} 
\end{align*}
thus $z \in B(z'',g+k)$.

Now suppose that $a \in \mathbb{Z}_2$, $k \ge 0$ with $\phi(a) < \frac{u}{2^g} - \frac{1}{2^{g+k}}$, and further suppose that $z \in B(a,{g+k})$; write $z = \sum_{i=0}^{\infty}z_i2^i$ and set $z'' = \sum_{i=0}^{g+k-1}z^i2^i$, note that $z''$ and $a$ have the same dyadic expansion up to the $(g+k-1)$-th digit, then
\begin{align*}
\phi(z) &\le \phi(z'' + \sum_{i=g+k}^{\infty}2^i)\\
        &= \phi(z'') + \sum_{i=g+k}^{\infty}\frac{1}{2^{i+1}}\\
				&\le \phi(a) + \sum_{i=g+k}^{\infty}\frac{1}{2^{i+1}}\\
				&< \frac{u}{2^g} - \frac{1}{2^{g+k}} + \sum_{i=g+k}^{\infty}\frac{1}{2^{i+1}}\\
				&=\frac{u}{2^g} - \frac{1}{2^{g+k}} + \frac{1}{2^{g+k}}\\
				&=\frac{u}{2^g}.
\end{align*}
summarizing $\phi(z) < \frac{u}{2^g}$, then $z \in \phi^{-1}(-\infty, u/2^g)$.
\end{description}
The second equality follows similarly by a dual argument, avoiding an infinite expansion of $0$'s, as we avoided an infinite expansion of $1's$.
\end{proof}

By analogy we define a frame map  $\widetilde \phi\colon \mathcal{L}[0,1]\rightarrow\mathcal{L}(\mathbb{Z}_2)$ on basic members of $\mathcal{L}[0,1]$ where
\[\widetilde\phi(-,u/2^g) = \bigvee\left\{B(a,{g+k})\mid\phi(a) < \frac{u}{2^g} - \frac{1}{2^{g+k}} \text{ and } k \geq 0 \right\}\]
\[\widetilde\phi(u/2^g,-) = \bigvee\left\{B(a,{g+k})\mid\phi(a) > \frac{u}{2^g} + \frac{1}{2^{g+k}} \text{ and } k \geq 0 \right\}\]
we now proceed to show that $\widetilde \phi$ is an injective frame morphism.

\begin{lem}\label{mor1}
Let $q$ be a dyadic rational, then $\widetilde \phi(-,q) = 1$ if and only if $q>1$.
\end{lem}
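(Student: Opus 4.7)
The plan is to reduce to the spatial picture. By the corollaries of Section \ref{Cantor}, $\mathcal{L}(\mathbb{Z}_2)$ is spatial with space of points homeomorphic to $\mathbb{Z}_2$; under the resulting isomorphism $\mathcal{L}(\mathbb{Z}_2) \cong \Omega(\mathbb{Z}_2)$, each generator $B(a, n)$ corresponds to the open ball $S_{1/2^n}\langle a\rangle \subseteq \mathbb{Z}_2$, and frame joins go to topological unions.

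First I would match $\widetilde\phi(-,u/2^g)$ against $\phi^{-1}(-\infty, u/2^g)$. By the preceding lemma,
\[ \phi^{-1}(-\infty, u/2^g) = \bigcup \{ S_{1/2^{g+k}}\langle a\rangle : \phi(a) < u/2^g - 1/2^{g+k},\ k \ge 0 \}, \]
which is precisely the image under the spatial isomorphism of the join defining $\widetilde\phi(-,u/2^g)$ in $\mathcal{L}(\mathbb{Z}_2)$. Consequently $\widetilde\phi(-,q) = 1$ in $\mathcal{L}(\mathbb{Z}_2)$ if and only if $\phi^{-1}(-\infty, q) = \mathbb{Z}_2$.

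The last step uses surjectivity of $\phi$, together with the fact that $\phi(-1) = 1$ (since $-1$ has all $2$-adic digits equal to $1$). If $q > 1$, then $\phi(z) \le 1 < q$ for every $z \in \mathbb{Z}_2$, giving $\phi^{-1}(-\infty, q) = \mathbb{Z}_2$. Conversely, if $q \le 1$ then $\phi(-1) = 1 \ge q$, so $-1 \notin \phi^{-1}(-\infty, q)$ and thus $\phi^{-1}(-\infty, q) \ne \mathbb{Z}_2$. I expect the main obstacle to be the bookkeeping in the second step: verifying that the frame-theoretic join defining $\widetilde\phi(-,u/2^g)$ matches, term by term under the spatial isomorphism, the topological union for $\phi^{-1}(-\infty, u/2^g)$ supplied by the previous lemma, so that an equality of open sets transfers back to an equality in $\mathcal{L}(\mathbb{Z}_2)$.
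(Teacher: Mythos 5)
Your proof is correct, but it takes a genuinely different route from the paper. The paper stays entirely inside the frame: assuming $\widetilde\phi(-,q)=1$ it invokes compactness of $\mathcal{L}(\mathbb{Z}_2)$ to extract a finite join $\bigvee_{i=1}^n B(a_i,g+k_i)=1$, and then argues combinatorially that unless some $\phi(a_i)\geq 1-1/2^{g+k_i}$ (forcing $u/2^g>1$), the ball $B(o,K)$ centered at $o=\sum_{j=0}^{K}2^j$ is a nonzero element disjoint from every $B(a_i,g+k_i)$, a contradiction. You instead route everything through spatiality: identifying $\mathcal{L}(\mathbb{Z}_2)$ with $\Omega(\mathbb{Z}_2)$, matching the defining join of $\widetilde\phi(-,u/2^g)$ term by term with the union computed for $\phi^{-1}(-\infty,u/2^g)$ in the preceding lemma, and then using surjectivity of $\phi$ onto $[0,1]$ together with $\phi(-1)=1$. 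Both arguments are sound; note that the paper's element $o$ is exactly the truncation of your witness $-1=\sum_{i\geq 0}2^i$, so the two proofs exploit the same obstruction, one with points and one without. What your version buys is brevity and transparency, at the cost of leaning on the spatial representation (the isomorphism $B_r(a)\mapsto S_r\langle a\rangle$, which the paper only cites rather than proves) and on the topological lemma holding for dyadic rationals outside $[0,1]$; the paper's version is self-contained at the frame level, which is more in keeping with the pointfree program of the rest of the section. If you adopt your route you should say explicitly why the index sets of the join and the union coincide (the centers in both range over $\mathbb{Z}$ and the radii over the same set), since that is the one step you flag but do not carry out.
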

\begin{proof}
Suppose that $\widetilde \phi(-,q) = 1$, now write $q = u/2^g$, then
$$\bigvee\left\{B(a,{g+k}) \lvert \phi(a) < \frac{u}{2^g} - \frac{1}{2^{g+k}} \text{ and } k \geq 0 \right\} = 1$$
it follows from compactness of $\mathcal{L}(\mathbb{Z}_2)$ that there are finitely many open balls such that 
$$\bigvee_{i=1}^n B(a_i,{g+k_i}) = 1.$$ Now for the sake of contradiction assume that for all $i = 1,\ldots,n$ we have $$1-\frac{1}{2^{g+k_i}} > \phi(a_i)$$ from the equations
$$1 = \sum_{j=0}^{g+k_i-1}\frac{1}{2^{j+1}} + \sum_{j = g+k_i}^{\infty}\frac{1}{2^{j+1}} = \sum_{j=0}^{g+k_i-1}\frac{1}{2^{j+1}} + \frac{1}{2^{g+k_i}}$$
it follows that for each $i$ there is a  $0 \le j \le g+k_i-1$ with $(a_i)_j = 0$; otherwise, for some $i'$, $(a_{i'})_j = 1$ for all $0 \le j \le g+k_{i'}-1$ so $$\phi(a_{i'}) \ge \phi\Bigg(\sum_{j=0}^{g+k_{i'}-1}2^i\Bigg) = 1-\frac{1}{2^{g+k_{i'}}}$$ which is impossible. Now, let $K$ be the maximum among the $g+k_i$ to define $o = \sum_{j=0}^{K}2^j$, then $|o-a_i|_2 > \frac{1}{2^{g+k_i}}$ which implies that $B(o,K) \wedge B(a_i,{g+k_i}) = 0$, note that $B(o,K) \ne 0$
\begin{align*}
B(o,K) &= B(o,K) \wedge 1\\ 
           &= B(o,K) \wedge \bigvee_{i=1}^n B(a_i,{g+k_i})\\
					 &= \bigvee_{i=1}^n B(o,K) \wedge B(a_i,{g+k_i})\\
					 &= \bigvee_{i=1}^n 0\\
					 &= 0
\end{align*}
contradiction. It follows that for some $i$ 
$$1 - \frac{1}{2^{g+k_i}} \ge \phi(a_i) < \frac{u}{2^g} - \frac{1}{2^{g+k_i}}$$
therefore
$$1 < \frac{u}{2^g}.$$
For the converse, assume that $q > 1$ and note that $\phi(0) = 0 < q - 1/2$ and $\phi(1) = 1/2 < q - 1/2$ then
$$\widetilde \phi(-,q) = \bigvee\left\{B(a,{g+k}) \lvert \phi(a) < q - \frac{1}{2^{g+k}} \text{ and } k \geq 0 \right\}$$ 
$$\ge B(0,1) \vee B(1,1) =1.$$
\end{proof}

Note that $\widetilde \phi(p,q) = \widetilde \phi((-,q) \wedge (p,-)) = \widetilde\phi((-,q)) \wedge \widetilde\phi((p,-))$ so we get the following
\begin{cor}\label{mor2}
For $p$ and $q$ dyadic rationals, $\widetilde \phi(p,q) = 1$ if and only if $p<0$ and $q>1$.
\end{cor}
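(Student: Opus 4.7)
The plan is to reduce the statement to Lemma \ref{mor1} together with its dual, using the frame-theoretic fact that in any frame $a\wedge b=1$ forces $a=b=1$ (since $1$ is the top element, so $a\wedge b\leq a$ and $a\wedge b\leq b$). The starting point is the identity already noted before the corollary, namely
\[
\widetilde\phi(p,q)=\widetilde\phi\bigl((-,q)\wedge(p,-)\bigr)=\widetilde\phi(-,q)\wedge\widetilde\phi(p,-),
\]
which is just that $\widetilde\phi$ is a frame homomorphism. Combining these two observations, $\widetilde\phi(p,q)=1$ iff $\widetilde\phi(-,q)=1$ and $\widetilde\phi(p,-)=1$, so it suffices to establish the two one-sided characterizations separately.

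One of these is already in hand: Lemma \ref{mor1} gives $\widetilde\phi(-,q)=1$ iff $q>1$. The remaining task is the dual claim that $\widetilde\phi(p,-)=1$ iff $p<0$. I would prove it by running the proof of Lemma \ref{mor1} in mirror image. First I would use compactness of $\mathcal{L}(\mathbb{Z}_2)$ (Corollary \ref{cpt}) to extract a finite subcover $\bigvee_{i=1}^{n}B(a_i,g+k_i)=1$ from the definition of $\widetilde\phi(p,-)$; then, assuming for contradiction that $\phi(a_i)\leq p + \tfrac{1}{2^{g+k_i}}$ for every $i$ with $p\geq 0$, I would exhibit an explicit basic open ball $B(o,K)\neq 0$ disjoint from every $B(a_i,g+k_i)$. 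The key difference with the proof of Lemma \ref{mor1} is that one must now avoid an infinite tail of zeros rather than an infinite tail of ones: the natural choice is $o=0$ at the leading digits so that any dyadic expansion of the $a_i$ bounded above by $p+1/2^{g+k_i}\leq 1$ is forced to differ from $o$ in one of its first $K$ digits, and ultrametricity of $|\cdot|_2$ then gives the required disjointness. This is precisely the ``dual argument'' alluded to in the last sentence of the proof of the previous lemma.

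For the converse direction of the dual claim, if $p<0$ then $\phi(0)=0>p+1/2$ and $\phi(1)=1/2>p+1/2$, so $B(0,1)\vee B(1,1)=1$ is dominated by $\widetilde\phi(p,-)$, forcing $\widetilde\phi(p,-)=1$. Combining with Lemma \ref{mor1} yields the corollary in a single line.

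The only non-routine step is the dual compactness argument above; it is essentially a transcription of the proof of Lemma \ref{mor1} with ``$1$'' and ``$0$'' swapped throughout the binary expansions, and the expected obstacle is purely bookkeeping, ensuring that the explicit witness $B(o,K)$ really has positive ``distance'' from all the $a_i$ in the chosen finite subcover.
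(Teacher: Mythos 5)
Your proposal follows the paper's route exactly: the paper's entire proof is the identity $\widetilde\phi(p,q)=\widetilde\phi((-,q))\wedge\widetilde\phi((p,-))$ displayed just before the corollary, combined with Lemma \ref{mor1} and its (unstated) dual, and your dual compactness argument with witness $o=0$ is a correct rendering of the ``dual argument'' the paper leaves implicit. One small arithmetic slip: for $-\tfrac12\le p<0$ the inequality $\phi(0)=0>p+\tfrac12$ fails, so in the converse direction you should instead note that every ball $B(a,g+1)$ satisfies $\phi(a)\ge 0>p+\tfrac{1}{2^{g+1}}$ (since $p\le -1/2^{g}$), whence $\widetilde\phi(p,-)$ dominates the full partition of radius $2^{-(g+1)}$ and equals $1$.
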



\begin{lem}\label{mor3}
For $p$, $q$, $r$, and $s$ dyadic rationals, $\widetilde \phi\left( (p,q) \vee (r,s) \right) = 1$ if and only if $(p,q) \vee (r,s) = 1$.
\end{lem}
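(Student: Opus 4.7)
My plan is to translate the statement through the spatiality isomorphism $\mathcal{L}(\mathbb{Z}_2)\cong\Omega(\mathbb{Z}_2)$ and the surjectivity of $\phi$, reducing it to an elementary fact about preimages. Under this isomorphism the previous lemma identifies $\widetilde\phi(-,q)$ with $\phi^{-1}(-\infty,q)$, and by the symmetric computation $\widetilde\phi(p,-)$ with $\phi^{-1}(p,\infty)$; meeting and joining, $\widetilde\phi(p,q)$ then corresponds to $\phi^{-1}((p,q))$, so $\widetilde\phi((p,q)\vee(r,s))$ corresponds to $\phi^{-1}((p,q)\cup(r,s))$. Meanwhile, unwinding $\mathcal{L}[0,1]=\uparrow((-,0)\vee(1,-))$, the relation $(p,q)\vee(r,s)=1$ in $\mathcal{L}[0,1]$ is equivalent to the set-theoretic inclusion $(p,q)\cup(r,s)\supseteq[0,1]$.

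For the forward direction ($\Rightarrow$) I would argue by contraposition: if $(p,q)\cup(r,s)\not\supseteq[0,1]$, pick any $x$ in the complement and any $z\in\phi^{-1}(x)$ (using that $\phi$ is onto). Then $z\notin\phi^{-1}((p,q)\cup(r,s))$, so the corresponding open set of $\mathbb{Z}_2$ is a proper subset and the element $\widetilde\phi((p,q)\vee(r,s))$ of $\mathcal{L}(\mathbb{Z}_2)$ is not $1$.

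For the backward direction ($\Leftarrow$) I would split into cases according to the geometry of the covering of $[0,1]$; assume without loss of generality $p\le r$. Either one of the two intervals alone contains $[0,1]$ (so $p<0$ and $q>1$), in which case $\widetilde\phi(p,q)=1$ by Corollary~\ref{mor2} and the conclusion is immediate; or the two intervals chain across $[0,1]$ with $p<0$, $s>1$, and $r<q$. In the chaining case, Lemma~\ref{mor1} and its right-hand analog give $\widetilde\phi(p,-)=\widetilde\phi(-,s)=1$, reducing the goal to $\widetilde\phi(-,q)\vee\widetilde\phi(r,-)=1$ whenever $r<q$; via the correspondence above this equals $\phi^{-1}((-\infty,q)\cup(r,\infty))=\phi^{-1}(\mathbb{R})=\mathbb{Z}_2$.

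The main obstacle is the bookkeeping step of verifying that the meet of the two joins of basic balls that define $\widetilde\phi(p,-)$ and $\widetilde\phi(-,q)$ really does correspond, under the spatiality isomorphism, to $\phi^{-1}((p,q))$; this uses the distributive law in $\mathcal{L}(\mathbb{Z}_2)$ together with the explicit description of pairwise meets of basic balls from (R$_2$) to reorganize the double join as the union in the previous lemma. Once that identification is in hand the rest is straightforward set theory, and an alternative more computational route is also available for the chaining subcase: pick $n$ with $1/2^n<(q-r)/2$, observe that $\phi(B\langle z,n\rangle)$ is a closed interval of length $1/2^n$ and therefore fits entirely inside $(-\infty,q)$ or inside $(r,\infty)$, and conclude from (R$_5$) and the cover $U_n$.
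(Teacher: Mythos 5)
Your plan is correct, but it takes a genuinely different route from the paper's. The paper argues inside the frames: the implication from $(p,q)\vee(r,s)=1$ is dispatched by appealing to $\widetilde\phi$ being a morphism (which is somewhat circular, since this lemma is one of the relation checks needed before one may conclude from Theorem~\ref{reals} that $\widetilde\phi$ \emph{is} a morphism), and the converse is split into the cases where the two intervals overlap (reducing to Corollary~\ref{mor2}) or are separated by a gap, in which case a dyadic $a$ in the gap is chosen and the written proof stops at the identity $\widetilde\phi((-,a))\vee\widetilde\phi((a,-))=1$ without deriving the intended contradiction. You instead push everything through the spatiality isomorphism $\mathcal{L}(\mathbb{Z}_2)\cong\Omega(\mathbb{Z}_2)$: since that isomorphism sends each generator $B(a,n)$ to the corresponding ball and preserves finite meets and arbitrary joins, the preceding lemma computing $\phi^{-1}(-\infty,u/2^g)$ immediately identifies $\widetilde\phi(-,q)$ with $\phi^{-1}(-\infty,q)$ and hence $\widetilde\phi(p,q)\vee\widetilde\phi(r,s)$ with $\phi^{-1}\bigl((p,q)\cup(r,s)\bigr)$ --- the ``bookkeeping obstacle'' you worry about is automatic and needs no reorganization via (R$_2$). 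The statement then reduces to the set-theoretic fact that $\phi^{-1}\bigl((p,q)\cup(r,s)\bigr)=\mathbb{Z}_2$ iff $(p,q)\cup(r,s)\supseteq[0,1]$, which follows from surjectivity of $\phi$ together with spatiality of $\mathcal{L}[0,1]$. What your route buys is completeness and non-circularity: your contrapositive step (an uncovered point of $[0,1]$ has a nonempty, uncovered fibre) supplies exactly the contradiction the paper's gap case leaves implicit, and your case analysis for the other direction uses only Lemma~\ref{mor1}, its right-hand analogue, and Corollary~\ref{mor2} rather than the morphism property. What it costs is a heavier reliance on points and on the external map $\phi$, where the paper's style (as completed in Lemma~\ref{mor1}) works with compactness and ball combinatorics internally to $\mathcal{L}(\mathbb{Z}_2)$; since this section of the paper already invokes $\phi$ and spatiality freely, that trade is acceptable.
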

\begin{proof}
Assume that $(p,q) \vee (r,s) = 1$, since $\widetilde \phi$ is a morphism, we get that $$\widetilde\phi((p,q) \vee (r,s)) = \widetilde \phi(1) = 1.$$

On the other hand, set $m = \min(p,r)$ and $M = \max(q,s)$ the either $(p,q) \land (r,s) \neq 0$ or $(p,q) \land (r,s) \neq 0$: in the first case $(p,q) \vee (r,s) = (m,M)$ and thus the result follows from the above corollary, in the later, either $q < r$ or $s < p$, set $a$ to be the average of $q$ and $r$ or $s$ and $p$ depending on the case. We see that $(p,q) \vee (r,s) \le (-,a) \vee (a,-)$, then $1=\widetilde\phi((p,q) \vee (r,s)) \le \widetilde\phi((-,a) \vee (a,-))$ so $\widetilde\phi((-,a)) \vee \widetilde\phi((a,-))=1$, write $a = u/2^g$ and
thus 
\begin{align*}
&1=\\
&\bigvee\left\{B(x,{g+k}) \,|\, \phi(x) < a - 1/2^{g+k} \right\} \vee \bigvee\left\{B(x,{g+k}) \,|\, \phi(x) > a + 1/2^{g+k} \right\} 
\end{align*} 
\end{proof}

It follows from \ref{reals} that $\widetilde{\phi}$ is a frame morphism.

\begin{lem}\label{lem1}
If $\widetilde{\phi}(u/2^{g},v/2^{h})=0$ then $(u/2^{g},v/2^{h})=0$.
\end{lem}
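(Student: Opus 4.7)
The plan is to prove the contrapositive: if $(u/2^{g},v/2^{h}) \neq 0$ in $\mathcal{L}[0,1]$, then $\widetilde{\phi}(u/2^{g},v/2^{h}) \neq 0$ in $\mathcal{L}(\mathbb{Z}_2)$. Interpreting the quotient $\mathcal{L}[0,1] = \,\uparrow\hspace{-.05in}((-,0)\vee(1,-))$ via spatiality of $\mathcal{L}(\mathbb{R})$, the hypothesis $(u/2^{g},v/2^{h}) \neq 0$ is equivalent to the real open interval $(u/2^g, v/2^h)$ meeting $(0,1)$, i.e., to $\max\{u/2^g,0\} < \min\{v/2^h,1\}$. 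By density of the dyadics, we then choose a dyadic rational $y = w/2^N$ with $0 < w < 2^N$ strictly between $\max\{u/2^g,0\}$ and $\min\{v/2^h,1\}$, so that both $y - u/2^g$ and $v/2^h - y$ are strictly positive.

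We next realise $y$ as $\phi(a)$ for a non-negative integer $a$: writing $w = \sum_{i<N} b_i 2^i$ in binary, the reversed expansion $a := \sum_{i<N} b_{N-1-i}2^i$ satisfies $\phi(a) = w/2^N = y$, by the same bit-reversal computation used in the two-preimage proposition at the start of this section. Choose an integer $M \ge \max\{g,h\}$ large enough that $1/2^M < \min\{y - u/2^g,\, v/2^h - y\}$; then, with $k_v := M-h$ and $k_u := M-g$,
$$\phi(a) = y < \frac{v}{2^h} - \frac{1}{2^{h+k_v}}, \qquad \phi(a) = y > \frac{u}{2^g} + \frac{1}{2^{g+k_u}}.$$
By the very definition of $\widetilde{\phi}$ on basic generators, the ball $B(a,M)$ therefore appears as one of the joinands of both $\widetilde{\phi}(-, v/2^h)$ and $\widetilde{\phi}(u/2^g, -)$, so
$$B(a, M) \le \widetilde{\phi}(-,v/2^h) \wedge \widetilde{\phi}(u/2^g, -) = \widetilde{\phi}(u/2^g, v/2^h).$$

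It remains to observe that $B(a,M) \neq 0$ in $\mathcal{L}(\mathbb{Z}_2)$: under the canonical frame homomorphism $\mathcal{L}(\mathbb{Z}_2) \to \Omega(\mathbb{Z})$ introduced just after Proposition~\ref{completion}, $B(a,M)$ maps to the open ball $B_{1/2^M}\langle a \rangle \subset \mathbb{Z}$, which contains $a$ and is therefore non-empty. Hence $\widetilde{\phi}(u/2^g,v/2^h) \neq 0$, as required. The only slightly delicate step is the initial translation from the lattice-theoretic assertion in $\mathcal{L}[0,1]$ to the point-set statement that $(u/2^g,v/2^h) \cap (0,1) \neq \emptyset$; once that is in place, choosing $M$ large enough to fit a buffer of $1/2^M$ strictly inside the gap reduces everything to unfolding the definition of $\widetilde{\phi}$ on generators.
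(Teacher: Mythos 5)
Your proof is correct and follows essentially the same route as the paper's: argue by contrapositive, pick a dyadic rational strictly inside the interval, realise it as $\phi(a)$ for an integer $a$ via bit reversal, and exhibit a small ball $B(a,M)$ that is a joinand of both $\widetilde{\phi}(-,v/2^h)$ and $\widetilde{\phi}(u/2^g,-)$. The paper uses the midpoint and radius $2^{-(m+2)}$ where you use an arbitrary interior dyadic and a large $M$; if anything, your version is slightly more careful in translating ``$\neq 0$'' in the quotient $\mathcal{L}[0,1]$ into $\max\{u/2^g,0\}<\min\{v/2^h,1\}$ and in justifying $B(a,M)\neq 0$.
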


\begin{proof}
By contrapositive. Assume that $(u/2^{g},v/2^{h})\neq 0$ then $u/2^{g}<v/2^{h}$ now set $m=\text{max}\{g,h\}$ and define $u', v'$ such hat $\dfrac{u}{2^{g}}=\dfrac{u'}{2^{m}}$ and $\dfrac{v}{2^{h}}=\dfrac{v'}{2^{m}}$. Now, set $w=\dfrac{u'+v'}{2^{m+1}}$, that is, $w$ is the average, which is a dyadic rational. Since $\dfrac{u'}{2^{m}}<w<\dfrac{v'}{2^{m}}$ there is a unique $2$-adic integer $\bar{w}$ which is eventually zero in its $2$-adic expansion so that $\phi(\bar{w})=w$.
Now, set $B=B(\bar{w}, 1/2^{m+2})$ and note that

\begin{align*}
\frac{v'}{2^{m}}-\frac{1}{2^{m+2}}-\frac{u'+v'}{2^{m+1}}&=\frac{4v'-1-2u'-2v'}{2^{m+1}} \\
&=\frac{2v'-2u'-1}{2^{m+1}} \\
&=\frac{2(v'+u')-1}{2^{m+1}}\geq\frac{2(1)-1}{2^{m+1}}>0\\
\end{align*}

Then $\phi(\bar{w})=w<\dfrac{v'}{2^{m}}-\dfrac{1}{2^{m+2}}$. Similarly, we have that 

\begin{align*}
w-\frac{u'}{2^{m}}-\frac{1}{2^{m+2}}&=\frac{2(v'-u')-1}{2^{m+1}}\\
 &\geq \frac{1}{2^{m+1}}>0\\
\end{align*}

thus $\phi(\bar{w})=w>\frac{u'}{2^{m}}+\frac{1}{2^{m+2}}$. Then $B$ is a member of 
$$\Big\{B(a,h+k)\mid \phi(a)<\frac{v}{2^{h}}-\frac{1}{2^{g+k}}\text{ with } k\geq 0\Big\}$$ 
and of 
\[\Big\{B(a,h+k)\mid \phi(a)>\frac{v}{2^{h}}+\frac{1}{2^{g+k}}\text{ with } k\geq 0\Big\}\]
 clearly $B\leq\widetilde{\phi}(-, v/2^{h})$ and $B\leq\widetilde{\phi}(u/2^{g},-)$ and $B\neq\emptyset$, thus \[0<B\leq\widetilde{\phi}(-, v/2^{h})\wedge\widetilde{\phi}(u/2^{g},-)=\widetilde{\phi}(u/2^{g},v/2^{h}).\]

\end{proof}

\begin{cor}\label{mor4}
The morphism $\widetilde{\phi}\colon\mathcal{L}[0,1]\rightarrow\mathcal{L}(\mathbb{Z}_{2})$ is injective.

\end{cor}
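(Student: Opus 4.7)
The plan is to first extend Lemma \ref{lem1} from basic generators to arbitrary elements of $\mathcal{L}[0,1]$, thereby showing that $\widetilde{\phi}$ is dense, and then to promote density to injectivity using spatiality together with compactness and regularity of both frames.

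First I will verify density. Suppose $\widetilde{\phi}(a) = 0$ in $\mathcal{L}[0,1]$. Since $\mathcal{L}[0,1]$ is generated by the basic open intervals $(p,q)$ for $p,q$ dyadic, we may write $a = \bigvee_{i}(p_i,q_i)$; because $\widetilde{\phi}$ preserves joins, this gives $\widetilde{\phi}((p_i,q_i)) = 0$ for every $i$, and Lemma \ref{lem1} then forces $(p_i,q_i) = 0$, whence $a = 0$.

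Next I will pass to the spatial picture. The frame $\mathcal{L}[0,1]$ is compact regular and hence spatial, with $\pt(\mathcal{L}[0,1]) \cong [0,1]$; the frame $\mathcal{L}(\mathbb{Z}_2)$ is spatial (by the corollary following Proposition \ref{clsubloc}) with $\pt(\mathcal{L}(\mathbb{Z}_2)) \cong \mathbb{Z}_2$. Consequently $\widetilde{\phi}$ corresponds to pullback along a continuous map $f\colon \mathbb{Z}_2 \to [0,1]$, and the density just proved translates to $f(\mathbb{Z}_2)$ being dense in $[0,1]$. Since $\mathbb{Z}_2$ is compact and $[0,1]$ is Hausdorff, $f(\mathbb{Z}_2)$ is also closed in $[0,1]$, so $f(\mathbb{Z}_2) = [0,1]$; that is, $f$ is surjective. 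Surjectivity of $f$ then implies that two opens of $[0,1]$ having the same $f$-preimage must coincide, which is exactly injectivity of $\widetilde{\phi}$.

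The main obstacle is the passage from density to injectivity: density alone is not sufficient in pointfree topology, as pullback along the inclusion of a dense non-closed subspace gives a dense non-injective frame morphism. What rescues us here is the compactness of $\mathbb{Z}_2$ together with the Hausdorffness of $[0,1]$, which together force $f(\mathbb{Z}_2)$ to be closed and hence equal to all of $[0,1]$. The map $f$ appearing in the argument is, as one expects, exactly the Hausdorff map $\phi$ recalled at the start of this section; this identification can be read off from the definition of $\widetilde{\phi}$ on the generators $(-,u/2^g)$ and $(u/2^g,-)$ together with the explicit preimage formula for $\phi^{-1}((-\infty,u/2^g))$ proved earlier, but it is not strictly required by the argument above.
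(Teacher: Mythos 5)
Your proof is correct, and its first half --- establishing density by writing $a=\bigvee_i(p_i,q_i)$ and applying Lemma \ref{lem1} to each generator --- is exactly the paper's argument. Where you diverge is in upgrading density to injectivity. The paper stays pointfree: it simply invokes the standard fact that a dense frame homomorphism from a regular frame into a compact frame is one-one, which is the localic form of the very argument you give (the localic image of a compact locale in a regular locale is a compact, hence closed, sublocale; being also dense, it must be everything). You instead pass to points, using spatiality of both frames to realize $\widetilde\phi$ as $f^{-1}$ for a continuous $f\colon\mathbb{Z}_2\to[0,1]$ and then running the classical compact-image-in-Hausdorff argument. This is valid: both frames are spatial with the expected point spaces, sobriety of $[0,1]$ gives the correspondence between frame homomorphisms into $\Omega(\mathbb{Z}_2)$ and continuous maps out of $\mathbb{Z}_2$, and surjectivity of $f$ does force $f^{-1}$ to be injective on opens. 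But it buys nothing that the pointfree statement does not already give, and it costs something: spatiality of compact regular frames rests on the prime ideal theorem, so your route is choice-dependent and point-sensitive in a paper whose stated purpose is to argue without points. Your closing diagnosis --- that density alone would not suffice and that compactness of the codomain is what closes the gap --- is the right one in either formulation.
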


\begin{proof}
Since $\mathcal{L}[0,1]$ and $\mathcal{L}(\mathbb{Z}_{2})$ are regular and $\mathcal{L}(\mathbb{Z}_{2})$ is compact (see \cite{avila2020frame}) it suffices to show that $\widetilde{\phi}$ is dense. 
Let $a\in\mathcal{L}[0,1]$ and assume that $\widetilde{\phi}(a)=0$. Then 
\[0=\widetilde{\phi}(a)=\tilde{\phi}\Bigg(\bigvee_{i}(p_{i},q_{i})\Bigg)=\bigvee_i\tilde{\phi}(p_{i},q_{i})\] so $\widetilde{\phi}(p_{i},q_{i})=0$ for each $i$, thus by the above Lemma \ref{lem1}, $(p_{i},q_{i})=0$ therefore $a=0$.
\end{proof}

Now, consider a morphism $\varphi\colon L\rightarrow M$ between frames. For any set $\EuScript{K}$, take the coproduct $\bigoplus_{\EuScript{K}}L={^{\EuScript{K}}}L$ and $\bigoplus_{\EuScript{K}}M={^{\EuScript{K}}}M$. For each $j \in \EuScript{K}$, we have a morphisms $\iota_{j}\colon L\rightarrow\mathcal{D}(M^{\EuScript{K'}})$  given by $\mu\alpha\kappa_{j}$, where $\kappa_{j}\colon M\rightarrow  M^{\EuScript{K'}}$ (the morphisms into the coproduct as semilattices), $\alpha\colon M^{\EuScript{K'}}\rightarrow\mathcal{D}(M^{\EuScript{K'}})$ ($\alpha(x)=\downarrow x$), and $\mu\colon\mathcal{D}(M^{\EuScript{K'}})\rightarrow {^{\EuScript{K}'}}M$ is the quotient given by the relation. Thus, by the universal property of the coproduct, we have a unique frame morphism $\varphi^{\flat}\colon ^{\EuScript{K}}L\rightarrow {^{\EuScript{K}}}M$. The morphims $\varphi^{\flat}$ is constructed as follows:\\ First, we have a morphism $\hat{\varphi}\colon L^{\EuScript{K}'}\rightarrow\mathcal{D}(M^{\EuScript{K}'})$ given by $\hat{\varphi}(x_{i})=\bigwedge_{k=1}^{n}\varphi_{j_{k}}(x_{j_{k}})$, where $\varphi_{j_{k}}=\alpha\kappa_{j}$. Then, by the universal property of the completion $\mathcal{D}(L^{\EuScript{K}'})$, we have a frame morphism $\varphi^{\sharp}\colon \mathcal{D}(L^{\EuScript{K}'})\rightarrow \mathcal{D}(M^{\EuScript{K}'})$. Thus, for each $j$, we have a frame morphism $\mu\varphi^{\sharp}\alpha'_{j}\colon L\rightarrow {^{\EuScript{K}}}M$. That is, $$\varphi^{\flat}(U)=\bigvee\{\mu\varphi^{\sharp}(U)\}=\bigvee\{\mu\hat{\varphi}(x_{i})\mid x_{i}\in U\}.$$ With this preamble at hand we have the following.

\begin{lem}\label{inj}
Let $\varphi\colon L\rightarrow M$ a dense morphism between compact regular frames then \[\varphi^{\flat}\colon {^{\EuScript{K}}}L\rightarrow {^{\EuScript{K}}}M\] is an injective frame morphism.

\end{lem}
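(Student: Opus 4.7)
The plan is to reduce injectivity of $\varphi^{\flat}$ to the classical fact that any dense frame homomorphism out of a compact regular frame is automatically injective (see, e.g., \cite{PicadoPultr}). Since coproducts preserve both compactness (point-free Tychonoff) and regularity, the coproducts ${^{\EuScript{K}}}L$ and ${^{\EuScript{K}}}M$ are again compact regular, so the whole problem collapses to verifying that $\varphi^{\flat}$ is dense.

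To prove density, I would start with $U \in {^{\EuScript{K}}}L$ satisfying $\varphi^{\flat}(U) = 0$ and use the explicit formula recalled just above,
\[
\varphi^{\flat}(U) = \bigvee\{\mu\hat\varphi(x_i) \mid x_i \in U\}.
\]
Every summand must then vanish: for each generator $x_i$ of $U$ (a finite tuple in $L^{\EuScript{K}'}$), the basic element $\bigwedge_k \iota^{M}_{j_k}\bigl(\varphi(x_{j_k})\bigr)$ equals $0$ in ${^{\EuScript{K}}}M$. Invoking the standard fact that a basic meet of coproduct injections is $0$ only when one of its factors is $0$, one extracts an index $j_k$ with $\varphi(x_{j_k}) = 0$. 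Density of $\varphi$ then forces $x_{j_k} = 0$ in $L$, which makes the corresponding basic element $\bigwedge_k \iota^{L}_{j_k}(x_{j_k})$ already zero in ${^{\EuScript{K}}}L$. Taking the join over all $x_i \in U$ yields $U = 0$, proving density and hence injectivity.

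The main obstacle is the coproduct lemma ``basic element is zero iff some factor is zero.'' It really does need unwinding the concrete construction via the down-set frame $\mathcal{D}(\prod'_i L_i)$ and the saturation relation $R$, since one must certify that the nucleus $\mu$ does not collapse a basic meet of nonzero injections to $0$. The cleanest way I would argue this is by exhibiting, for each coordinate $j_k$, the projection (right adjoint of $\iota_{j_k}$) that sends the basic meet back to the factor $y_{j_k}$; alternatively one can reason directly with the generators of $R$. Once that technical point is in hand, the rest of the proof is a formal chase through the definition of $\varphi^{\flat}$.
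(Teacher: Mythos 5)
Your proposal follows essentially the same route as the paper: reduce injectivity to density (using that the coproducts remain compact regular and that dense maps out of compact regular frames are monic), then chase the explicit formula for $\varphi^{\flat}$, invoke the fact that a basic element $\bigoplus_k \varphi(x_{j_k})$ of the coproduct vanishes only if some factor $\varphi(x_{j_k})$ vanishes, and finish with density of $\varphi$. The only difference is one of care: you flag the coproduct lemma (``basic element is zero iff some factor is zero'') as the genuine technical point and sketch how to certify it, whereas the paper asserts it rather briskly from $\mu$ being induced by a nucleus.
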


\begin{proof}

It is enough to see that $\varphi^{\flat}$ is a dense morphism. Let $U\in L^{\EuScript{K}'}$ such that $\varphi^{\flat}(U)=0$ that is, $\bigvee\{\mu\hat{\varphi}(x_{i})\mid x_{i}\in U\}=0$ thus every $\mu\hat{\varphi}(x_{i})=0$ since $\mu$ is the corresponding frame morphism given by a nucleus (see \cite[III.11]{PicadoPultr}), we have $\hat{\varphi}(x_{i})=0$. Thus, there is an $i$ such that $\varphi(x_{i})=0$. Since $\varphi$ is dense, we have $x_{i}=0$ and thus $U=0$ as required.

\end{proof}

\begin{cor}\label{de}

There is an injective frame morphism \[\bar{\phi}\colon {^{\EuScript{K}}}\mathcal{L}[0,1]\rightarrow{^{\EuScript{K}}}\mathcal{L}(\mathbb{Z}_{2})\]

\end{cor}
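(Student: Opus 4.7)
The plan is to obtain $\bar{\phi}$ as the coproduct extension $\widetilde{\phi}^{\flat}$ of the morphism $\widetilde{\phi}\colon \mathcal{L}[0,1]\to\mathcal{L}(\mathbb{Z}_2)$ constructed earlier in this section, and then to apply Lemma \ref{inj} to conclude injectivity. Since Lemma \ref{inj} already packages the $\EuScript{K}$-fold coproduct construction for us, the only work remaining is to verify that its hypotheses are met for our specific $\widetilde{\phi}$.

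First, I would note that both $\mathcal{L}[0,1]$ and $\mathcal{L}(\mathbb{Z}_2)$ are compact regular frames: compactness and regularity of $\mathcal{L}(\mathbb{Z}_2)$ were proved in Section \ref{Cantor} (see Corollary \ref{cpt} and the regularity argument there), while $\mathcal{L}[0,1]$ is compact and regular as a closed sublocale of the (regular) frame $\mathcal{L}(\mathbb{R})$, as recalled in the Preliminaries. Next, I would observe that $\widetilde{\phi}$ is dense: by Corollary \ref{mor4} it is injective, and injective frame homomorphisms are trivially dense (if $\widetilde{\phi}(a)=0=\widetilde{\phi}(0)$ then $a=0$). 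Thus all the hypotheses of Lemma \ref{inj} are satisfied with $L=\mathcal{L}[0,1]$, $M=\mathcal{L}(\mathbb{Z}_2)$, and $\varphi=\widetilde{\phi}$.

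Finally, I would set $\bar{\phi}:=\widetilde{\phi}^{\flat}\colon {^{\EuScript{K}}}\mathcal{L}[0,1]\rightarrow {^{\EuScript{K}}}\mathcal{L}(\mathbb{Z}_{2})$, which Lemma \ref{inj} guarantees is an injective frame morphism. There is no serious obstacle in this proof; the real content has already been absorbed into Corollary \ref{mor4} (where the density of $\widetilde{\phi}$ was pinned down via the compactness/regularity argument of Lemma \ref{lem1}) and into Lemma \ref{inj} (where the transfer from a dense morphism to its coproduct extension was handled via the nucleus description of $\mu$). The present corollary is the clean payoff of those two results.
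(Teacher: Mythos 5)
Your proposal is correct and follows essentially the same route as the paper: the paper's proof likewise invokes Corollary \ref{mor4} to get a dense morphism $\widetilde{\phi}$ and then applies Lemma \ref{inj}. The only cosmetic difference is that you re-derive density from injectivity, whereas the paper's Lemma \ref{lem1} established density directly (with injectivity as the consequence); both readings are valid and your explicit check of the compact-regular hypotheses is a welcome addition.
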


\begin{proof}

By corollary \ref{mor4} we have an dense morphism $\widetilde{\phi}\colon\mathcal{L}[0,1]\rightarrow\mathcal{L}(\mathbb{Z}_{2})$ then by the Lemma \ref{inj} we have the conclusion.

\end{proof}

\begin{prop}\label{cantor}
Any countable coproduct of $\mathcal{L}(\mathbb{Z}_{2})$ is isomorphic to $\mathcal{L}(\mathbb{Z}_{2})$.

\end{prop}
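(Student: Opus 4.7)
The plan is to invoke Theorem \ref{c2}: to show that any countable coproduct $C := \bigoplus_{n \in I} \mathcal{L}(\mathbb{Z}_2)$ (with $I$ countable) is isomorphic to $\mathcal{L}(\mathbb{Z}_2)$, it suffices to verify that $C$ is compact, $0$-dimensional, metrizable, and satisfies $cbd_C(0) = 0$.

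Three of these properties transfer routinely from the factors. For $0$-dimensionality, each generator $B_r(a)$ of $\mathcal{L}(\mathbb{Z}_2)$ is complemented, so each $\iota_j(B_r(a))$ is complemented in $C$ with complement $\iota_j(B_r(a)')$; the basic meets $\iota_{n_1}(B_{r_1}(a_1))\wedge\cdots\wedge\iota_{n_k}(B_{r_k}(a_k))$ are then themselves complemented and join-generate $C$. Compactness is Tychonoff for frames applied to Corollary \ref{cpt}, and one may alternatively adapt the choice-free tree argument used above in the second proof of compactness of $\mathcal{L}(\mathbb{Z}_p)$. For metrizability, after fixing a bijection $I \cong \mathbb{N}$ I lift the countable uniform basis $\{U_n\}_n$ of $\mathcal{L}(\mathbb{Z}_2)$ constructed in Section \ref{Cantor} by defining, for each $n$,
\[
V_n := \{\iota_1(b_1)\wedge\cdots\wedge\iota_n(b_n) : b_1,\ldots,b_n \in U_n\},
\]
and checking that $\{V_n\}$ is a countably generated admissible uniformity on $C$; the required star-refinement reduces coordinatewise to $U_{n+1}^{*} \le U_n$ on each factor, while admissibility follows from the admissibility of each $\{U_n\}$ together with the density of basic elements in $C$.

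The substantive step is to verify $cbd_C(0) = 0$, i.e., that the meet of the dense elements of $C$ is $0$. Given $0 < u \in C$, I pick a nonzero basic element $b = \iota_{n_1}(y_1)\wedge\cdots\wedge\iota_{n_k}(y_k) \le u$ with each $y_i > 0$. By Proposition \ref{c33}, $\bigwedge\{z \in \mathcal{L}(\mathbb{Z}_2) : \neg z = 0\} = 0$, so since $y_1 > 0$ there is a dense $z^{*} \in \mathcal{L}(\mathbb{Z}_2)$ with $y_1 \not\le z^{*}$. Two short coproduct computations then complete the argument. First, $\neg \iota_{n_1}(z^{*}) = \iota_{n_1}(\neg z^{*}) = \iota_{n_1}(0) = 0$, so $\iota_{n_1}(z^{*})$ is dense in $C$ and hence $cbd_C(0) \le \iota_{n_1}(z^{*})$. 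Second, by the uniqueness of the coordinate representation of a nonzero basic element in a frame coproduct, $b \le \iota_{n_1}(z^{*})$ would force $y_1 \le z^{*}$, contradicting the choice of $z^{*}$; thus $b \not\le \iota_{n_1}(z^{*})$, and since $b \le u$ this yields $u \not\le \iota_{n_1}(z^{*})$, whence $u \not\le cbd_C(0)$. As $u > 0$ was arbitrary, $cbd_C(0) = 0$.

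The main obstacle I anticipate is establishing the two coproduct facts used in the last step cleanly. Both reduce to the standard identification that a nonzero basic element $\iota_{m_1}(y_1)\wedge\cdots\wedge\iota_{m_l}(y_l)$ with distinct indices is determined by its coordinate tuple modulo the convention that it collapses to $0$ whenever any $y_i = 0$. Once this is spelled out from the presentation of $\bigoplus_i F_i$ recalled in Section \ref{prel}, both the pseudocomplement computation and the order inference follow quickly, and Theorem \ref{c2} then yields $C \cong \mathcal{L}(\mathbb{Z}_2)$.
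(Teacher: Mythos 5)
The paper states Proposition \ref{cantor} with no proof at all, so there is nothing to compare against line by line; your route---verifying the hypotheses of Theorem \ref{c2} for $C=\bigoplus_{\mathbb{N}}\mathcal{L}(\mathbb{Z}_2)$---is clearly the intended one, given where the proposition sits, and your argument is correct. The routine transfers (complemented generators for $0$-dimensionality, pointfree Tychonoff for compactness, the lifted covers $V_n$ for metrizability) all go through, and the substantive step $cbd_C(0)=0$ is sound: the two coproduct facts you defer---that a basic element $\oplus a_i$ is $0$ iff some $a_i=0$, and that $\oplus a_i\le\oplus b_i$ for nonzero basic elements forces $a_i\le b_i$ coordinatewise---are exactly \cite[IV.5]{PicadoPultr}, and they yield both $\neg\iota_j(z)=\iota_j(\neg z)$ (so dense elements lift to dense elements of $C$) and the order reflection you use to get $b\not\le\iota_{n_1}(z^*)$. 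The only step I would spell out is the extraction of $z^*$: since $cbd_{\mathcal{L}(\mathbb{Z}_2)}(0)$ is by definition the meet of the dense elements, Proposition \ref{c33} says precisely that $y_1\le z$ for every dense $z$ would force $y_1=0$, which gives your $z^*$.
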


Let us call $ {^{\mathbb{N}}}\mathcal{L}[0,1]=\EuScript{H}$ the \emph{ Hilbert cube}. Thus by corollary \ref{de} and proposition \ref{cantor} we have:

\begin{prop}\label{end}

There is an injective frame morphism $\eta\colon\EuScript{H}\rightarrow\mathcal{L}(\mathbb{Z}_{2})$

\end{prop}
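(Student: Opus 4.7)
The plan is to obtain $\eta$ as a composition of the two pieces immediately preceding the statement. First I would instantiate Corollary \ref{de} with the index set $\EuScript{K}=\mathbb{N}$, which produces an injective frame morphism
$$\bar{\phi}\colon \EuScript{H} = {^{\mathbb{N}}}\mathcal{L}[0,1] \longrightarrow {^{\mathbb{N}}}\mathcal{L}(\mathbb{Z}_{2}).$$
This already does the real work: it lifts the dense (hence, by compact regularity, injective) map $\widetilde{\phi}\colon \mathcal{L}[0,1]\to \mathcal{L}(\mathbb{Z}_{2})$ of Corollary \ref{mor4} to coproducts via Lemma \ref{inj}.

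Next I would invoke Proposition \ref{cantor} to produce a frame isomorphism
$$\psi\colon {^{\mathbb{N}}}\mathcal{L}(\mathbb{Z}_{2}) \xrightarrow{\;\cong\;} \mathcal{L}(\mathbb{Z}_{2}),$$
since the target is a countable coproduct of $\mathcal{L}(\mathbb{Z}_{2})$. Setting $\eta := \psi \circ \bar{\phi}$ yields a frame morphism $\eta\colon \EuScript{H}\to \mathcal{L}(\mathbb{Z}_{2})$, and injectivity follows because both factors are injective: $\bar{\phi}$ by Corollary \ref{de} and $\psi$ by Proposition \ref{cantor}.

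Since the hard analytic content (density of $\widetilde{\phi}$, stability of density under coproducts of compact regular frames, and the self-similarity of $\mathcal{L}(\mathbb{Z}_{2})$ under countable coproducts) has all been packaged into the lemmas already established, there is no real obstacle here; the only thing to verify is that the composition of two injective frame morphisms is an injective frame morphism, which is immediate. If one wanted to be slightly more explicit, one could note that in the compact regular setting, injectivity is equivalent to density, and density of $\eta$ follows from the fact that $\psi(0)=0$ together with the density of $\bar{\phi}$: if $\eta(U)=0$ then $\psi(\bar{\phi}(U))=0$, so $\bar{\phi}(U)=0$ (as $\psi$ is an isomorphism), whence $U=0$ by Lemma \ref{inj}.
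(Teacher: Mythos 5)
Your proposal is correct and is precisely the argument the paper intends: the paper obtains $\eta$ by instantiating Corollary \ref{de} at $\EuScript{K}=\mathbb{N}$ and composing with the isomorphism ${^{\mathbb{N}}}\mathcal{L}(\mathbb{Z}_{2})\cong\mathcal{L}(\mathbb{Z}_{2})$ of Proposition \ref{cantor}. Your additional remark that injectivity survives the composition (or equivalently, that density does) is the only verification needed, and it is handled correctly.
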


As in the topological case we have the following.

\begin{prop}\label{quohil}
The following are equivalent for a frame $L$:
\begin{itemize}
\item[a)] $L$ is regular and has a countable basis.
\item[b)] $L$ is a quotient of a $\EuScript{H}$.
\end{itemize}
\end{prop}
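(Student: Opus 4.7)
For $(b)\Rightarrow(a)$, I would first observe that $\EuScript{H}=\bigoplus_{\mathbb{N}}\mathcal{L}[0,1]$ itself has a countable basis: the frame $\mathcal{L}[0,1]$ is countably based by the dyadic intervals used in Section \ref{haus}, and a countable coproduct of countably based frames is countably based (finite meets of coproduct injections $\iota_n(a)$ with $a$ basic in the $n$-th factor give a countable basis). Any surjective frame morphism $h\colon\EuScript{H}\to L$ sends a basis to a basis, since $h$ is onto and preserves joins; hence $L$ is countably based. Moreover $\mathcal{L}[0,1]$ is regular, coproducts of regular frames are regular \cite{PicadoPultr}, and an onto frame morphism presents its codomain in \textbf{Loc} as a sublocale; since sublocales of regular frames are regular, $L$ is regular.

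For $(a)\Rightarrow(b)$, my plan is to carry out the pointfree Urysohn metrization argument, exhibiting $L$ as a sublocale of $\EuScript{H}$. Fix a countable basis $\{b_n\}_{n\in\mathbb{N}}$ of $L$. Being countably based forces $L$ to be Lindel\"of (every cover is refined by a countable subcover chosen from the basis), and a Lindel\"of regular frame is normal, so $L$ is normal. The set $\Lambda=\{(m,n)\in\mathbb{N}^2:b_m\prec b_n\}$ is countable. For each $(m,n)\in\Lambda$ the pointfree Urysohn lemma produces a frame morphism $\varphi_{m,n}\colon\mathcal{L}[0,1]\to L$ together with a distinguished basic element $u\in\mathcal{L}[0,1]$ for which $b_m\leq\varphi_{m,n}(u)\leq b_n$ (this is the frame translation of the classical Urysohn function taking value $0$ on $\overline{b_m}$ and value $1$ off $b_n$, arising from a scale $\{w_q\}_{q\in\mathbb{Q}\cap[0,1]}$ between $b_m$ and $b_n$). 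Listing $\Lambda$ as $\mathbb{N}$ and invoking the universal property of the countable coproduct, these morphisms assemble into a single frame morphism $\Phi\colon\EuScript{H}\to L$.

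To finish I would verify that $\Phi$ is surjective. Since $\{b_n\}$ join-generates $L$, it is enough to produce each $b_n$ in the image of $\Phi$. By regularity of $L$ one has $b_n=\bigvee\{b_m:b_m\prec b_n\}$. For each such $m$ the element $\varphi_{m,n}(u)$ lies in the image of $\Phi$ via $\iota_{(m,n)}$ and satisfies $b_m\leq\varphi_{m,n}(u)\leq b_n$. Taking the join over all $m$ with $b_m\prec b_n$ gives an element that is both $\geq b_n$ and $\leq b_n$, hence equals $b_n$; so $b_n$ lies in the image of $\Phi$. Therefore $\Phi$ is onto, and $L$ is a quotient of $\EuScript{H}$.

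The principal obstacle is the supporting pointfree machinery I freely invoke: the implications ``countably based $\Rightarrow$ Lindel\"of'' and ``Lindel\"of regular $\Rightarrow$ normal'' in \textbf{Frm}, and especially the pointfree Urysohn lemma that turns a rather-below pair $b_m\prec b_n$ into a frame morphism $\mathcal{L}[0,1]\to L$ with the stated sandwiching property. These are classical results but require careful pointfree verification; with them available the surjectivity of $\Phi$ reduces cleanly to the regularity identity above, and no uncountable coproduct is needed thanks to the countable basis.
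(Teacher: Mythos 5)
Your argument is correct and essentially identical to the paper's: for a)$\Rightarrow$b) the paper also uses normality of a regular Lindel\"of countably based frame, the pointfree Urysohn lemma (citing Dowker) applied to the countable set of pairs $(a,b)$ with $a\prec b$ to get morphisms $h_{a,b}\colon\mathcal{L}[0,1]\to L$ with $a\leq h_{a,b}((-,1))\leq b$, and then the universal property of the countable coproduct plus the regularity identity $b=\bigvee\{x: x\prec b\}$ to obtain surjectivity; the b)$\Rightarrow$a) direction is likewise the same. No substantive difference in approach.
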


\begin{proof}
a) $\rightarrow$ b): Let $B$ be a countable basis for $L$, and let $$S = \{(a,b) \in B^2\, | \, a^* \vee b = 1\}.$$ Clearly $S$ is countable and, since $L$ is regular and Lindel\"of then it is normal, so for every pair $(a,b) \in S$ there is a frame morphism $h_{a,b}\colon\mathcal{L}[0,1] \to L$ where $h_{a,b}((0,-)) \leq a^*$ and $h_{a,b}((-,1)) \le b$, see \cite{Dowker}.\\ Moreover, $a \land h_{a,b} ((0,-)) \le a \land a^*  = 0$ so $a \land h_{a,b}((0,-)) = 0$, then
\begin{align*}
a &= a \land 1\\
  &= a \land [h_{a,b}(1)]\\
	&= a \land [h_{a,b}((0,-) \lor (-,1))]\\
	&= a \land [h_{a,b}((0,-)) \lor h_{a,b}((-,1))]\\
	&= [a \land h_{a,b}((0,-))] \lor [a \land h_{a,b}((-,1))]\\
	&= 0 \lor [a \land h_{a,b}((-,1))]\\
	&= a \land h_{a,b}((-,1))
\end{align*}
thus $a \le h_{a,b}((-,1)) \le b$.

Set $H = \{h_{a,b}\,\mid\, (a,b) \in S\}$ and for each $h$ in $H$, $\mathcal{L}[0,1]_h$ is a copy of $\mathcal{L}[0,1]$, by universal properties of the coproduct $\bigoplus_{h \in H}\mathcal{L}[0,1]_{h}$ there is a unique morphism $p\colon \bigoplus_{h \in H}\mathcal{L}[0,1]_{h} \to L$ such that $h_{a,b} = p \circ i_{h_{a,b}}$ where the $i_{h_{a,b}}$ are the inclusion morphisms of the coprodct. Now, let us show that $p$ is indeed surjective, so it suffices to show that every element $b$ in the basis $B$ is an image of a member of the coproduct. Set $s = \bigvee\{i_{h_{a,b}}(-,1)\,|\, (a,b) = S\}$, then $p(s) = \bigvee\{h_{a,b}(-,1)\,|\, (a,b) = S\}$. By construction of $h_{a,b}$, we have that $h_{a,b}(-,1) \le b$ for every $a$ in $B$ so $b \ge \bigvee\{h_{a,b}(-,1)\,|\, (a,b) = S\}$. On the other hand, by regularity of $L$
\begin{align*}
b &=   \bigvee\{x \in B\,|\, x \prec b\}\\
  &\le \bigvee\{h_{x,b}(-,1)\,|\, x \prec b\}\\
	&\le \bigvee\{h_{a,b}(-,1)\,|\, (a,b) \in S\}
\end{align*}
thus $p(s) = b$.

b) $\rightarrow$ a): We know that $\mathcal{L}[0,1]$ is both regular and has a countable basis, the countable coproduct of copies of $\mathcal{L}[0,1]$ is both regular (see \cite[VII. 1.2 and 4.5]{PicadoPultr} ) and possesses a countable basis. Moreover, these two properties pass to quotients (sublocales of regular locales are regular).

\end{proof}

In particular if $L$ is compact and since $\EuScript{H}$ is a compact regular frame (page 137 of \cite{PicadoPultr}) then $L$ is a closed quotient (see \cite[VII. 2.2.3]{PicadoPultr}) .

We are going to prove the point-free version of the Hausdorff-Alexandroff theorem, first we need the following material.
\begin{dfn}

Let $L$ and $M$ frames, we say that $L$ is \emph{retract} of $M$ if, there is a surjective frame morphism $\rho\colon M\rightarrow L$ such that there exists a morphism $\iota\colon L\rightarrow M$ such that $\rho\iota=id_{L}$.

\end{dfn}
If $L$ is a retract of $M$ then the composition $\iota\rho$ is an idempotent frame morphism.

\begin{lem}\label{down}
Let $b=B(a,n)$ and $x$ be in $\mathcal{L}(\mathbb{Z}_2)$ where $b$ is a basic element and $x < 1$. If $b \not\le x$, then there is a basic element $b'=B(a',n+1) < b$ such that $b' \not\le x$.  
\end{lem}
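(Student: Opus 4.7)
My plan is to split the ball $b$ into its two ``halves'' of radius one step smaller, using relation (R$_5$) for $p=2$, and then argue by contradiction that at least one half fails to be below $x$.

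First I would apply (R$_5$) with $p=2$ to the basic element $b=B(a,n)=B_{1/2^n}(a)$ to obtain the decomposition
\[
b \;=\; B(a,n+1)\;\vee\; B(a+2^{n},\,n+1),
\]
whose two summands are basic elements of length $n+1$ sitting below $b$. These two summands are moreover disjoint by (Q$_1$), since $|a-(a+2^{n})|_{2}=1/2^{n}\ge 1/2^{n+1}$.

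Next, suppose toward a contradiction that \emph{both} summands are $\le x$. Then their join, which is $b$, would also satisfy $b\le x$, contradicting the hypothesis $b\not\le x$. Hence at least one of the two halves is not below $x$; designate this one as $b'=B(a',n+1)$, where $a'\in\{a,\,a+2^{n}\}$.

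It remains to check that $b'$ is strictly below $b$. I would argue this by using that $\mathcal{L}(\mathbb{Z}_{2})$ is spatial (already established in the previous section), so the canonical map into $\Omega(\mathbb{Z}_{2})$ is injective; hence the other half $B(a'',n+1)$ (where $\{a',a''\}=\{a,a+2^{n}\}$) is nonzero, being sent to the nonempty open ball $S_{1/2^{n+1}}\langle a''\rangle$. Now if $b'=b$ held, then the other half $B(a'',n+1)\le b=b'$ would coincide with its meet with $b'$, which is $0$ by disjointness, contradicting $B(a'',n+1)\neq 0$. Therefore $b'<b$, as required. The only real step here is remembering to invoke spatiality (or directly the existence of the frame map into $\Omega(\mathbb{Z}_{2})$) to guarantee non-triviality of basic elements; everything else is a direct application of (R$_5$) and (Q$_1$).
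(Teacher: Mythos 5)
Your argument is correct and takes essentially the same route as the paper's proof: both assume for contradiction that every length-$(n+1)$ basic element below $b$ lies below $x$ and conclude that $b\le x$ since $b$ is the join of those elements, the paper working with the full join and you with the explicit two-piece decomposition from (R$_5$) plus an extra (reasonable but optional) spatiality check that $b'<b$ strictly. The only quibble is an off-by-one in the offset: with the paper's convention $B(a,n)=B_{1/2^{n}}(a)=a+2^{n+1}\mathbb{Z}_2$, the two halves are $B(a,n+1)$ and $B(a+2^{n+1},n+1)$, whereas $B(a+2^{n},n+1)$ is actually disjoint from $b$ by (Q$_1$) — a harmless slip that does not affect the structure of the argument.
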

\begin{proof}
Let $b$ and $x$ be as above, and assume for the sake of contradiction that for every $B(a',n+1) < b$, $B(a',n+1) \le x$, then $\bigvee_{B(a',n+1) < b}B(a',n+1) \le x$, but $\bigvee_{B(a',n+1) < b}B(a',n+1)=b \not\le x$ contradiction.
\end{proof}

\begin{lem}\label{fbasic}
Let $\mathcal{B}$ the set of basic elements of $\mathcal{L}(\mathbb{Z}_2)$ and $x < 1$, then there is a function $f: \mathcal{B} \to \mathcal{B}$ such that $f(b) \not\le x$ for all $b \in \mathcal{B}$. Moreover, $f$ can be chosen so that $f$ preserves
the radius of each ball in $\mathcal{B}$. 
\end{lem}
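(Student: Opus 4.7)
The plan is to choose, once and for all, a basic element $b_n \in \mathcal{B}$ of radius $1/2^n$ that is not below $x$, for every $n \ge 0$, and then define $f$ on any ball of radius $1/2^n$ to be the fixed element $b_n$. With this definition, radius-preservation is tautological, and $f(b) \not\le x$ holds by construction; thus the entire content of the proof lies in producing the sequence $(b_n)_{n \ge 0}$.

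For the base case $n = 0$, I would first verify that $B(0,0) \vee B(1,0) = 1$ in $\mathcal{L}(\mathbb{Z}_{2})$. Every generator $B_{r}(a)$ sits below either $B(0,0)$ or $B(1,0)$ according to the parity of $a$ (by (R$_1$), since $|a - 0|_{2} < 1$ for even $a$ and $|a - 1|_{2} < 1$ for odd $a$), so (Q$_2$) forces the join over $\{B(0,0),\, B(1,0)\}$ to equal $1$. Since $x < 1$, at least one of these two radius-$1$ balls cannot be below $x$, and I take it to be $b_0$.

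For the inductive step, given $b_n$ of radius $1/2^n$ with $b_n \not\le x$, Lemma \ref{down} (applied to $b_n$ and $x$, using the hypothesis $x<1$) produces a basic element $b_{n+1} < b_n$ of radius $1/2^{n+1}$ still not below $x$. Iterating yields the whole sequence $(b_n)_{n \ge 0}$, and I set $f(B(a, n)) := b_n$ for every center $a$ and every $n$. Then $f(B(a,n))$ has radius $1/2^n$ (same as $B(a,n)$), and $f(B(a,n)) = b_n \not\le x$.

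I do not anticipate a genuine obstacle: once Lemma \ref{down} is granted, the proof is a straight recursion. The only moment requiring care is the base case, because (Q$_2$) as stated only presents $1$ as a join over all radii and centers together, not as a join of fixed-radius balls; bridging this via (R$_1$) to obtain $1 = B(0,0) \vee B(1,0)$ is the one bookkeeping step that has to be performed. Everything else is automatic from the iteration and the way $f$ is defined.
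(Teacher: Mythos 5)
Your argument is correct for the lemma as literally stated, but it takes a genuinely different --- and much coarser --- route than the paper. You fix a single descending chain $b_0 > b_1 > \cdots$ of basic elements with $b_n \not\le x$ (base case from $B(0,0)\vee B(1,0)=1$ via (R$_1$) and (Q$_2$), inductive step from \cref{down}) and then collapse every ball of radius $1/2^n$ onto $b_n$. The paper instead runs the recursion separately over each ball: it sets $f(B(a,n+1))=B(a,n+1)$ whenever $B(a,n+1)\not\le x$, and only when $B(a,n+1)\le x$ does it invoke \cref{down} to replace it by a child of $f(B(a,n))$ that is not below $x$. As a proof of the displayed statement yours is sound, and your base case is arguably more careful than the paper's blanket assignment $f(B(a,0))=B(a,0)$, which tacitly assumes no radius-$1$ ball lies below $x$. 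The trade-off is that the paper's $f$ carries extra structure which the bare statement does not record but which \cref{endo} depends on: there one needs $f(b)=b$ for every $b\not\le x$, compatibility of $f$ with the tree of balls, and in particular $f^{-1}(\{b\})=\varnothing$ if and only if $b\le x$, so that $h(b)=\bigvee f^{-1}(\{b\})$ vanishes exactly on the balls below $x$. Your constant-per-level $f$ has $f^{-1}(\{b\})=\varnothing$ for every $b$ outside the chosen chain, so it satisfies this lemma yet cannot be fed into \cref{endo}. If one adopts your construction, the lemma would have to be restated with the additional clause ``$f(b)=b$ whenever $b\not\le x$,'' at which point the paper's ball-by-ball recursion becomes the natural (essentially forced) proof.
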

\begin{proof}
First, we define $f$ recursively on the exponent of the radius of each basic element $b$. Set $f(B(a,0)) = B(a,0)$. Now suppose that $f(B(a,n))$ is defined, if $B(a,n+1) \not\le x$ set $f(B(a,n+1)) = B(a,n+1)$; otherwise, by \cref{down}, there is a basic element $B(a',n+1) < f(B(a,n))$ such that $B(a',n+1) \not\le x$, so set $f(B(a,n+1)) = B(a',n+1)$.

Second, we proceed by induction on the exponent $n$ of the radius of basic elements. Let $B(a,n) \in \mathcal{B}$. When $n=0$, $B(a,n)=B(a,0)$ so $f(B(a,n)) = B(a,0)=B(a,n)$. Now let $k\ge 0$ and suppose that $f(B(a,k)) = B(a',k)$, then either $f(B(a,k+1)) = B(a,k+1)$ or $f(B(a,k+1)) = B(a'',k+1) < f(B(a,k)) = B(a',k)$.  
\end{proof}

\begin{prop}\label{endo}
Let $x$ be in $\mathcal{L}(\mathbb{Z}_2)$ where $x <1$, and let $\mathcal{B}$ the set of basic elements of $\mathcal{L}(\mathbb{Z}_2)$. The function $h\colon\mathcal{B} \to \mathcal{L}(\mathbb{Z}_2)$ given by \[h(b) = \bigvee f^{-1}(\{b\})\] where $f$ is as in \cref{fbasic} extends to a frame endomorphism $H$ on $\mathcal{L}(\mathbb{Z}_2)$ with $x = \bigvee H^{-1}(\{0\})$.
\end{prop}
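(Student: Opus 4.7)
The strategy is to define $H$ on the generators $B_r(a)$ by the prescribed formula $H(B_r(a)) = \bigvee f^{-1}(\{B_r(a)\})$ and to verify that the resulting images satisfy the three defining relations (Q$_1$), (Q$_2$), (Q$_3$); the universal property of the presentation of $\mathcal{L}(\mathbb{Z}_2)$ will then produce the frame endomorphism $H$. The equality $x = \bigvee H^{-1}(\{0\})$ will follow from a direct analysis of when $H$ vanishes on basic elements.

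The first step is to extract from the recursive construction in Lemma \ref{fbasic} an implicit \emph{tree-preservation} property: if $b' \le b$ are basic, then $f(b') \le f(b)$. For the one-level case $B(a,n+1) \le B(a,n)$, the definition branches on whether $B(a,n+1) \le x$. In the "otherwise" branch, $f(B(a,n+1))$ is chosen by construction as a strict descendant of $f(B(a,n))$. In the other branch $B(a,n+1) \not\le x$, so also $B(a,n) \not\le x$ (the contrapositive of $B(a,n+1) \le B(a,n)$); thus $f(B(a,n)) = B(a,n)$ and $f(B(a,n+1)) = B(a,n+1) \le B(a,n) = f(B(a,n))$. Iterating along chains of descendants gives the property for arbitrary $b' \le b$.

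Next I verify the three relations in $\mathcal{L}(\mathbb{Z}_2)$. For (Q$_2$), since $f$ is total on $\mathcal{B}$ and preserves radius, $\bigcup_{a,r} f^{-1}(\{B_r(a)\}) = \mathcal{B}$, so $\bigvee_{a,r} h(B_r(a)) = \bigvee \mathcal{B} = 1$. For (Q$_1$), with $|a-b|_2 \ge r$ and $s \le r$, distributivity gives
\[h(B_r(a)) \wedge h(B_s(b)) = \bigvee\{c \wedge c' : f(c) = B_r(a),\ f(c') = B_s(b)\};\]
since any two basic balls are nested or disjoint, and $c$ has radius $r \ge s$, a nonzero meet $c \wedge c' \ne 0$ forces $c' \le c$, whence tree-preservation yields $B_s(b) = f(c') \le f(c) = B_r(a)$; this contradicts (Q$_1$) in $\mathcal{L}(\mathbb{Z}_2)$ together with the nonvanishing of basic elements (which follows from spatiality). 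For (Q$_3$), take any $c$ with $f(c) = B_r(a)$; applying (Q$_3$) to $c$ itself decomposes it into strict descendants $c'$, each with $f(c')$ strictly below $B_r(a)$ (by radius- and tree-preservation), placing $c$ below the right-hand side. Conversely, if $f(c)$ is a strict descendant of $B_r(a)$, then the unique radius-$r$ ancestor $c^*$ of $c$ has $f(c^*)$ of radius $r$ and containing $f(c)$; the only such ball is $B_r(a)$, so $f(c^*) = B_r(a)$ and $c \le c^* \le h(B_r(a))$.

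Finally, to evaluate $H^{-1}(\{0\})$, observe that the image of $f$ equals $\{c \in \mathcal{B} : c \not\le x\}$: the inclusion $\subseteq$ is the property $f(b) \not\le x$ from Lemma \ref{fbasic}, and $\supseteq$ holds because whenever $B(a,n) \not\le x$ the recursion sets $f(B(a,n)) = B(a,n)$. Hence for basic $c$, $H(c) = \bigvee f^{-1}(\{c\}) = 0$ iff $c \le x$. Writing any $y \in \mathcal{L}(\mathbb{Z}_2)$ as $y = \bigvee\{c \in \mathcal{B} : c \le y\}$ and using that $H$ preserves joins, $H(y) = 0$ iff every basic $c \le y$ satisfies $c \le x$, equivalently $y \le x$. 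Thus $H^{-1}(\{0\}) = \downarrow\! x$ and $\bigvee H^{-1}(\{0\}) = x$. The main subtlety is the tree-preservation property, which is only implicit in the construction inside the proof of Lemma \ref{fbasic}; once it is in hand, the verification of the three relations proceeds by standard distributivity and covering arguments.
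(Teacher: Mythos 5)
Your proof is correct and follows essentially the same route as the paper's: verify the defining relations (Q$_1$)--(Q$_3$) for $h$ on the generators, invoke the universal property of the presentation to obtain $H$, and then identify $\bigvee H^{-1}(\{0\})$ with $x$ via the observation that the image of $f$ is exactly the set of basic elements not below $x$. The only organizational difference is that you isolate the monotonicity of $f$ on the tree of balls ($b'\leq b \Rightarrow f(b')\leq f(b)$) as an explicit auxiliary fact, whereas the paper carries out the same combinatorics through a direct case analysis on the two children of each ball; your packaging is slightly cleaner but the underlying argument is the same.
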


\begin{proof}
First, we need to show that $h(1) = \bigvee \{h(B(a,n)) \mid B(a,n) < 1\}$. Since $f(1) = 1$ we have $h(1) \ge 1 \ge h(1)$. For each positive integer $i$ set \[R_i := \{B \in \mathcal{B}\mid B=B(a,i)\},\] then $R_i=\{B(k,i)\mid k=0\ldots 2^i-1\}$. Note, that $1=\bigvee R_1$, so either
$f(B(0,1))=B(0,1)$ and $f(B(1,1))=B(1,1)$, or $f(B(0,1))=f(B(1,1))= B(0,1)$, or
$f(B(0,1))=f(B(1,1))=B(1,1)$, we cannot have that $f^{-1}(R_1)=\varnothing$; otherwise, $B(0,1),B(1,1) \le x$, so $x \ge B(0,1) \lor B(1,1) =1$ impossible.

Second, let $b=B(a,n)$ be a basic element, then either $b \le x$ or $b \not\le x$
\begin{description}
\item[Case $b \le x$.] Then $f^{-1}(\{b\})=\varnothing$ and for every $b'<b$ we have $b' < x$, so $f^{-1}(\{b'\})=\varnothing$. Thus $h(b)=0$ and $h(b')=0$ for $b'<b$.
\item[Case $b \not\le x$.] We know that for some $b'=B(a',n+1)<b$, $b' \not\le x$; otherwise, $b=\bigvee_{b'<b}b' \le x$ impossible. We know that there are exactly two
basic elements of with radius $2^{-(n+1)}$ below $b$, call them $b_0$ and $b_1$, then
there are two scenarios:
\begin{description}
\item[Subcase] $b_i \not \le x$ for $i=0,1$ So $f^{-1}(\{b_i\}) = \{b_i\}$ for $i=0,1$ so $h(b_i)=b_i$.
\item[Subcase] $b_i \not\le x$, $b_j \le x$ for $i,j=0,1$ and $i\neq j$. Since $f$ preserves radius we have that $f^{-1}(\{b_i\}) = \{b_0,b_1\}$ and $f^{-1}(\{b_j\})=\varnothing$
so $h(b_i) = b_0 \lor b_1$ and $h(b_j)=0$.
\end{description}
In all cases we see that $h(b)=\bigvee\{h(b')\mid b'=B(a',n+1)<b\}$. 
\end{description}

Third, let $B(a,n)$, and $B(a',n')$ be basic elements, and assume $$B(a,n) \land B(a',n')=0.$$ Let us say that $n \le n'$, then we can express 
\[B(a,n) = \bigvee\{B(\bar{a},n') \mid B(\bar{a},n') \le B(a,n) \},\] then
\[B(a,n) \land B(a',n') =  \bigvee\{B(\bar{a},n') \land B(a',n') \mid B(\bar{a},n') \le B(a,n) \} = 0.\] So it suffices to show that $h(b) \land h(b') = 0$ whenever $b$, and $b'$ are basic elements of the same length with $b \land b'=0$. Thus, let $B(a,n)$ and $B(a',n)$ be basic elements where $B(a,n) \land B(a',n)=0$, then $$f^{-1}(\{B(a,n)\}) \cap f^{-1}(\{B(a',n)\}) = \varnothing;$$ otherwise, $f$ is not a function. Since $f$ preserves length, all the members of $f^{-1}(\{B(a,n)\})$ and of $f^{-1}(\{B(a',n)\})$ have length $n$. If one of these sets is empty there is nothing to prove, so suppose that there are $b \in f^{-1}(\{B(a,n)\})$ and $b' \in f^{-1}(\{B(a',n)\})$, since $b \neq b'$ and the have the same length we have that $b \land b' = 0$, thus 
\begin{align*}
h(B(a,n)) \land h(B(a',&n)) = \bigvee f^{-1}\left(\{B(a,n)\}\right) \land \bigvee f^{-1}\left(\{B(a,n)\}\right)\\ 
                          &= \bigvee\{b \land b' \mid b \in f^{-1}(\{B(a,n)\}), b' \in f^{-1}(\{B(a',n)\}) \}\\ 
                          &= \bigvee\{0\}\\
                          &=0
\end{align*}
Therefore, $h$ extends uniquely to a frame endomorphism $H$.

Finally, note that for every basic element $b$ of $\mathcal{L}(\mathbb{Z}_2)$, $f^{-1}(\{b\})=\varnothing$ if and only if $b \le x$, in other words $H(b) = 0$ exactly when $b \le x$. Since $H$ is a frame morphism, we see that $\bigvee H^{-1}(0) = x$.
\end{proof}

\begin{cor}\label{retrac}
Every non-trivial closed quotient of $\mathcal{L}(\mathbb{Z}_2)$ is a retraction of $\mathcal{L}(\mathbb{Z}_2)$.
\end{cor}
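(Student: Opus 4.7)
The plan is to use the endomorphism from Proposition \ref{endo} to produce a section of the closed quotient map. Let $x\in\mathcal{L}(\mathbb{Z}_2)$ with $x<1$, so that $\uparrow x$ is a non-trivial closed quotient with canonical frame surjection $c_x\colon\mathcal{L}(\mathbb{Z}_2)\to\,\uparrow x$ given by $c_x(a)=a\vee x$. The task is to build a frame morphism $\iota\colon\,\uparrow x\to\mathcal{L}(\mathbb{Z}_2)$ satisfying $c_x\circ\iota=\mathrm{id}_{\uparrow x}$; then $(c_x,\iota)$ witnesses $\uparrow x$ as a retract of $\mathcal{L}(\mathbb{Z}_2)$.

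Proposition \ref{endo} supplies a frame endomorphism $H$ of $\mathcal{L}(\mathbb{Z}_2)$ with $\bigvee H^{-1}(\{0\})=x$. Since $H$ preserves arbitrary suprema, we get $H(x)=0$; and because finite meets and arbitrary joins in $\uparrow x$ are computed exactly as in $\mathcal{L}(\mathbb{Z}_2)$, the formula $\iota(a):=H(a)$ defines a frame morphism $\iota\colon\,\uparrow x\to\mathcal{L}(\mathbb{Z}_2)$ sending the bottom $x$ of $\uparrow x$ to $0$ and the top to $1$.

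The crux is to verify the retraction identity $c_x(\iota(a))=H(a)\vee x=a$ for every $a\geq x$. I first check this on basic elements. For a basic element $b$: if $b\leq x$ then $H(b)=0$ and $H(b)\vee x=x=b\vee x$; if $b\not\leq x$ then, by the recursive construction of $f$ in Proposition \ref{endo}, one has $f(b)=b$, so $b\in f^{-1}(\{b\})$ and $b\leq H(b)$, while any other $b'\in f^{-1}(\{b\})\setminus\{b\}$ must satisfy $b'\leq x$ (since $f$ only reroutes basic elements that lie below $x$). Hence $H(b)\vee x=b\vee x$ in both cases. Writing an arbitrary $a\geq x$ as the supremum of the basic elements below it, splitting that join into those basic elements contained in $\downarrow x$ (whose join equals $x$ because $\mathcal{L}(\mathbb{Z}_2)$ is generated by basic elements) and those that are not, and using $a\vee x=a$, the identity $H(a)\vee x=a$ follows from distributivity.

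The main obstacle is exactly this last step, and within it the pointwise identity $H(b)\vee x=b\vee x$ on basic elements. The subtlety is that $f^{-1}(\{b\})$ may contain basic balls that are \emph{disjoint} from $b$ (the map $f$ only preserves radius, not containment), so a priori $H(b)$ can protrude beyond $b$; what rescues the argument is the recursive definition of $f$ in the proof of Proposition \ref{endo}, which guarantees that any such spurious preimage already lies in $\downarrow x$ and is therefore absorbed when we join with $x$.
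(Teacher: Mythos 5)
Your proof is correct and takes essentially the same route as the paper: both obtain the retraction from the endomorphism $H$ of Proposition \ref{endo} together with the closed quotient map $a\mapsto a\vee x$ on $\uparrow x$. The only difference is that you verify the retraction identity $H(a)\vee x=a$ explicitly from the recursive construction of $f$ (checking it first on basic elements), a step the paper leaves implicit behind its appeal to the dense--closed factorization of $H$.
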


\begin{proof}
Let $C$ be a non-trivial quotient of $\mathcal{L}(\mathbb{Z}_2)$, then there is an $x <1$ in $\mathcal{L}(\mathbb{Z}_2)$ with $C\cong \uparrow x$, by \cref{endo} there is an endomorphism $H\colon\mathcal{L}(\mathbb{Z}_2) \to \mathcal{L}(\mathbb{Z}_2)$. Note that $H$ admits a dense-closed quotient factorization $H=r\circ i$ with \[i\colon\mathcal{L}(\mathbb{Z}_{2})\rightarrow\uparrow x\;\;\;r\colon\uparrow x\to\mathcal{L}(\mathbb{Z}_2)\] where $i(a)= a \lor x$ and $r(a)=H(a)$, (see e.g. \cite{banaschewski1999uniform}). Since $\mathcal{L}(\mathbb{Z}_2)$ is compact, (see \cite{avila2020frame}) and closed quotients of compact frames are compact, (see \cite{PicadoPultr}) we note that $i$ and $r$ provide the desired retraction.
\end{proof}

\begin{thm}\label{Alex-Haus}
For every compact metrizable frame $L$ there is an embedding \[\vartheta\colon L\rightarrow \mathcal{L}(\mathbb{Z}_{2}).\] In other words every compact metrizable frame  can be identified as a subframe of the Cantor frame $\mathcal{L}(\mathbb{Z}_{2})$.
\end{thm}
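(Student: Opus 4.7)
The plan is to follow the pointfree analogue of the classical Hausdorff--Alexandroff strategy: every compact metric space sits inside the Hilbert cube, and the Hilbert cube is a continuous image of Cantor space. Dually, every compact metrizable frame should be a closed quotient of the Hilbert cube frame $\mathcal{H}$, and $\mathcal{H}$ in turn embeds into $\mathcal{L}(\mathbb{Z}_{2})$ by Proposition \ref{end}. My job is to transport the first quotient along the second embedding.

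First, since $L$ is compact and metrizable, it is regular and has a countable basis, so Proposition \ref{quohil} supplies a surjective frame morphism $q\colon \mathcal{H}\twoheadrightarrow L$. Because $L$ is compact and $\mathcal{H}$ is compact regular, the remark following that proposition tells me the quotient is closed: there exists $a\in \mathcal{H}$ (which I may assume satisfies $a<1$, the one-element case being vacuous) with $L\cong {\uparrow}a$. Next I would take the embedding $\eta\colon \mathcal{H}\hookrightarrow \mathcal{L}(\mathbb{Z}_{2})$ of Proposition \ref{end} and set $a':=\eta(a)\in \mathcal{L}(\mathbb{Z}_{2})$; injectivity of $\eta$ plus $a<1$ gives $a'<1$.

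The core observation is that $\eta$ restricts to an injective frame morphism $\bar\eta\colon {\uparrow}a\to {\uparrow}a'$, $b\mapsto \eta(b)$, between the two closed quotients. Well-definedness is monotonicity, and this is a frame morphism because, in a closed quotient $\uparrow c$, finite meets and arbitrary joins are computed exactly as in the ambient frame (elements above $c$ are closed under these operations), so $\bar\eta$ inherits preservation from $\eta$; moreover it sends bottom to bottom ($\bar\eta(a)=\eta(a)=a'$) and top to top. Now Corollary \ref{retrac} applied to $a'<1$ yields a section $i\colon {\uparrow}a'\hookrightarrow \mathcal{L}(\mathbb{Z}_{2})$ of a retraction, and $i$ is an injective frame morphism. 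Composing,
\[
\vartheta\colon\; L\; \xrightarrow{\;\cong\;}\; {\uparrow}a\; \xrightarrow{\;\bar\eta\;}\; {\uparrow}a'\; \xrightarrow{\;i\;}\; \mathcal{L}(\mathbb{Z}_{2}),
\]
produces the desired embedding.

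The only genuinely delicate point is verifying that $\bar\eta$ really is a frame morphism \emph{between the quotients} rather than merely a restriction of $\eta$ as a poset map; once one spells out that the quotient operations on $\uparrow c$ agree with the ambient ones and that bottoms match under $\bar\eta$, the rest is a straight assembly of Propositions \ref{end}, \ref{quohil} and Corollary \ref{retrac}. The hardest conceptual work, which has already been carried out in the preceding sections, was the construction of $\eta$ via the dyadic-rational map $\widetilde\phi$ together with the retraction result Corollary \ref{retrac}; without the latter, an embedding of ${\uparrow}a'$ back into $\mathcal{L}(\mathbb{Z}_{2})$ would not be available.
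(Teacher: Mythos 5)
Your proposal is correct and follows the same route as the paper's proof: realize $L$ as a closed quotient ${\uparrow}a$ of $\EuScript{H}$ via Proposition \ref{quohil}, push it into $\mathcal{L}(\mathbb{Z}_{2})$ via the embedding of Proposition \ref{end}, and return from the resulting closed quotient via Corollary \ref{retrac}. If anything, your handling of the middle step is more careful than the paper's, which asserts $\theta({\uparrow}x)={\uparrow}\theta(x)$ (dubious for a non-surjective $\theta$), whereas you only use the induced injective frame morphism ${\uparrow}a\to{\uparrow}\eta(a)$, which is all that is needed.
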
 

\begin{proof}
Let $L$ be a compact metrizable frame, then by proposition \ref{quohil} there is a surjective frame morphism $\rho\colon\EuScript{H}\rightarrow L$ also by \ref{end} we have an embedding $\theta\colon\EuScript{H}\rightarrow\mathcal{L}(\mathbb{Z}_{2})$
Then we have the following diagram:
\[\begin{tikzcd}
\EuScript{H} \arrow[r, "\theta"] \arrow[d, "\rho"'] & \mathcal{L}(\mathbb{Z}_{2}) \\ L 
\end{tikzcd}\]

here we identify $L\cong\uparrow x$ for some $x\in\EuScript{H}$. Thus under $\theta$ we have $\theta(\uparrow x)$ is closed in $\mathcal{L}(\mathbb{Z}_{2})$, that is,  $\theta(\uparrow x)=\uparrow\theta(x)$. We have a commutative diagram:

\[\begin{tikzcd}[
  column sep=normal,row sep=small,
  ar symbol/.style = {draw=none,"\textstyle#1" description,sloped},
  isomorphic/.style = {ar symbol={\cong}},
  ]
\EuScript{H} \arrow[r, "\theta"] \arrow[d, "\rho"'] & \mathcal{L}(\mathbb{Z}_{2})\arrow[d,"u"] \\ L \arrow[r, isomorphic] &  \uparrow\theta(x)
\end{tikzcd}\]
where $u$ is the quotient morphism, then by corollary \ref{retrac} we have a morphism $\uparrow\theta(x)\rightarrow\mathcal{L}(\mathbb{Z}_{2})$ which is an embedding.

\end{proof}

\bibliographystyle{plain}

\bibliography{refer}

\end{document}